\documentclass[11pt]{article}

\usepackage[margin=1in]{geometry}
\usepackage[T1]{fontenc}
\usepackage{lmodern}
\usepackage{microtype}
\usepackage{amsmath,amssymb,amsthm,mathtools}
\usepackage{booktabs,array}
\usepackage{enumitem}
\usepackage{xcolor}
\usepackage{float}
\usepackage{tikz}
\usepackage{comment}
\usetikzlibrary{arrows.meta,calc,positioning,shapes.geometric}
\usepackage[hidelinks]{hyperref}

\usepackage{adjustbox}

\hypersetup{
  pdftitle={Improved l0-Isoperimetry for Convex Bodies via Mass Transport},
  pdfauthor={Manuel Fernandez V}
}

\allowdisplaybreaks
\setlist{itemsep=2pt,topsep=4pt}
\numberwithin{equation}{section}

\newtheorem{theorem}{Theorem}[section]
\newtheorem{proposition}[theorem]{Proposition}
\newtheorem{lemma}[theorem]{Lemma}

\theoremstyle{definition}
\newtheorem{definition}[theorem]{Definition}
\newtheorem{remark}[theorem]{Remark}

\newcommand{\R}{\mathbb{R}}

\newcommand{\poly}{\operatorname{poly}}
\newcommand{\vol}{\operatorname{vol}}

\newcommand{\E}{\mathbb{E}}
\newcommand{\Id}{\operatorname{Id}}
\newcommand{\one}{\mathbf{1}}
\newcommand{\defeq}{\mathrel{:=}}
\newcommand{\eps}{\varepsilon}
\newcommand{\sgn}{\operatorname{sign}}
\newcommand{\ind}{\textbf{1}}
\newcommand{\bren}{\nabla\varphi}
\newcommand{\hess}{\nabla^2\varphi}
\newcommand{\ip}[2]{\langle #1,#2\rangle}

\DeclarePairedDelimiter{\floor}{\lfloor}{\rfloor}
\DeclarePairedDelimiter{\ceil}{\lceil}{\rceil}
\newcommand{\norm}[1]{\lVert #1\rVert_2}
\newcommand{\normQ}[1]{\lVert #1\rVert_Q}

\title{Improved $\ell_0$-Isoperimetry for Convex Bodies via Mass Transport}
\author{Manuel Fernandez V}
\date{August 2026}

\begin{document}
\maketitle

\begin{abstract}
We study $\ell_0$ isoperimetry for a convex body $K\subset\R^n$,
$n\ge2$.  For a Borel set $S\subset K$, let $\partial_0^K S$ be the set of
points in $K\setminus S$ that can be reached from $S$ by changing at
most one coordinate (i.e. the $\ell_0$ boundary of $S$).  Suppose that, for some unconditional convex body
$Q\subset\R^n$, numbers $r,R>0$, and possibly different centers
$x_0,y_0$,
\[
 x_0+rQ\subset K\subset y_0+RQ.
\]
Writing $s=\vol(S)/\vol(K)$, we prove that whenever $0<s\le1/2$,
\[
 \frac{\vol(\partial_0^K S)}{\vol(S)}
 \ge \frac{cr}{nR}
 \min\left\{1,\frac{\log(e/s)}{n}\right\},
\]
where $c > 0$ is an absolute constant.
Consequently, the associated $\ell_0$-isoperimetric coefficient is at least
$cr/(n^2R)$. 

Previously, lower bounds for regularities other than $\ell_2$ and $\ell_\infty$ were obtained through sandwiching-type arguments and lost up to a factor of $n^{1/2}$ or $n$ in comparison to the corresponding lower bound in $\ell_2$ or $\ell_\infty$. In contrast our lower bound holds directly for any $Q$-regularity, where $Q$ is an unconditional convex body. This includes $\ell_p$-regularity for every $1\le p\le\infty$. Compared to $\ell_2$ and $\ell_\infty$ regularity, our lower bound result improves upon the previously best known lower bounds, for any $s$, by a factor of $n$.  As an application of our result, we give improved mixing time bounds for the Coordinate Hit and Run walk (CHAR).

Our proof of the lower bound is based on the method of canonical paths, a tool typically used to prove discrete/combinatorial isoperimetric inequalities on graphs. 
At a high level the method asks us to find, for any vertex subset $S$ of $G$, many paths from $S$ to $S^c$ where no vertex/edge is covered by too many paths. 
Because these paths cross the vertex/edge boundary of $S$, their low vertex/edge congestion imply that the boundary is large. 
In our setting the corresponding graph on $K$ has a continuum number of vertices and edges and the notion of size is volume, so an appropriate formulation is necessary. Our construction of canonical paths can be viewed as a suitable coordinate discretization of certain mass transport maps from $S$ to $S^c$.

To complement our lower bounds we give two
upper bound constructions for any $Q$-regularity. We show that slanted $Q$-cylinders have constant-sized subsets with $\ell_0$ boundary having relative volume equal to $O(r/(Rn))$. We also show that slanted $Q$-cones have exponentially small subsets with $\ell_0$ boundary having relative volume equal to $O(r/R)$. Both constructions only require that $R \ge Cr$, for some absolute constant $C$. For constant-sized and exponentially small-sized sets, 
our upper and lower bounds are within a factor of $O(n)$ of each other.
We conjecture that under unconditional $Q$-regularity the correct
constant-set scale is of order $r/(nR)$ and the exponentially small-set
scale is of order $r/R$.

\end{abstract}

\newpage
\section{Introduction}

Gibbs sampling is one of the basic coordinatewise methods for sampling a
multivariate distribution.  It goes back at least to Turchin's Monte Carlo
integration method and was later popularized in statistical imaging by Geman
and Geman~\cite{Turchin1971,GemanGeman1984}.  At each step one chooses a
coordinate, holds all other coordinates fixed, and resamples the chosen
coordinate from its conditional distribution.  For the uniform distribution on a convex body
$K\subset\R^n$, this is coordinate hit-and-run (CHAR): from $x\in K$, choose
$i\in[n]$ uniformly and sample the next point uniformly from the
\emph{coordinate fiber} $K\cap(x+\R e_i)$.  We call a transition between two
points on a common coordinate fiber a \emph{coordinate move}; equivalently,
the two points differ in at most one coordinate.  Unlike ordinary hit-and-run, which chooses a random
direction, CHAR moves only in the standard coordinate directions.  This can
make each update substantially cheaper, especially for polyhedral bodies, but
it also makes the geometry of convergence sensitive to the chosen coordinate
system~\cite{LaddhaVempala2023,NarayananSrivastava2022}.

The boundary notion adapted to these moves is measured by volume rather than
surface area.  For a Borel set $S\subset K$, let $\partial_0^K S$ be the set of
points in $K\setminus S$ that can be reached from $S$ by changing at most one
coordinate. Equivalently, $\partial_0^K S$ is the exterior vertex boundary
in the continuous graph over $K$ that joins points on a common coordinate fiber. We refer to $\partial_0^K S$ as the \emph{$\ell_0$ boundary of $S$}.
Two sets are \emph{axis-disjoint} if no such edge joins them.
In this paper we study the one-set coefficient $\xi_0(K)$, and the separator coefficient $\psi_0(K)$;
their definitions and their equivalence up to the normalization in
Proposition~\ref{prop:equivalence} are given in
Section~\ref{sec:prelim}.

A convex body
$Q\subset\R^n$ is \emph{unconditional} if it is invariant under
arbitrary sign changes of its coordinates.  We assume that, for possibly
different centers $x_0,y_0$,
\begin{equation}\label{eq:unconditional-regularity}
 x_0+rQ\subset K\subset y_0+RQ.
\end{equation}
The ratio $R/r$ is invariant under scalar dilation, and both coordinate
coefficients are invariant under translations and scalar dilations.  Taking
$Q=B_p^n$ gives the $\ell_p$ setting for every $1\le p\le\infty$. Note that
orientation of $Q$ matters: arbitrary affine changes of coordinates do not
preserve $\ell_0$ boundary.

An initial law $\mu_0$ is \emph{$M$-warm} with respect to the target law
$\pi$ if $\mu_0(A)\le M\pi(A)$ for every measurable $A$.  We call $\mu_0$ a
\emph{warm start} when $M = O(\poly n)$ and a \emph{cold start} when $M = O(\exp(\poly n))$.

\subsection{$\ell_0$ isoperimetry and polynomial mixing time of CHAR}
The first polynomial mixing guarantees for CHAR on general convex bodies were
proved concurrently by Laddha and Vempala and by Narayanan and
Srivastava~\cite{LaddhaVempala2023,NarayananSrivastava2022}.  Laddha and
Vempala assume that $K$ contains a Euclidean ball of radius $r$ and use the
root-mean-square radius
\[
 R_2^2=\mathbb E_{X\sim\operatorname{Unif}(K)}
       \|X-z_K\|_2^2
\]
about the centroid.  From an $M$-warm start they obtain, up to logarithmic
factors, the mixing bound
\[
 \widetilde O\!\left(
   M^2n^9(R_2/r)^2\varepsilon^{-2}
 \right).
\]
To prove this bound they gave a lowerbound for the size of the separator of axis-disjoint sets.
More specifically, under the above normalization, they showed that if $K=S_1 \cup S_2
\cup S_3$, with $S_1,S_2$ axis-disjoint, then for every  $\eps>0$ one has 
\begin{equation}\label{eq:LV-intro}
 \vol(S_3)\ge
 \frac{c\eps r}{n^{7/2}R_2}
 \left(\min\{\vol(S_1),\vol(S_2)\}-\eps\vol(K)\right).
\end{equation}
The subtractive term is the cost of discarding a boundary layer before a
fixed-scale cube decomposition is applied.

The Laddha--Vempala proof begins by first establishing a stronger separator inequality
 when $K$ is an axis-aligned cube. It then considers a tiling of the deep part of $K$ by small cubes.
If much of the smaller set lies in locally mixed cubes, the separator inequality for the cube
implies a non-small separator for $K$. Otherwise, the union of cubes for which that small set has
high density has a large Euclidean boundary and that, coupled with an analysis of the adjacent cubes, shows that adjacent cubes contain a non-small separator for $K$.  A one-step coupling then converts the separator inequality into a $s$-conductance bound.

The work of Narayanan and Srivastava work with the $\ell_\infty$ regularization $B_\infty^n\subset K\subset R_\infty B_\infty^n$.
They prove, from an $M$-warm start, a mixing time bound of the form
\[
 \widetilde O\!\left(
 M^4n^7R_\infty^4\varepsilon^{-4}
 \right)
\]
~\cite{NarayananSrivastava2022}.  Their mechanism is different from
cube tiling.  They introduce a coordinate Gaussian proposal chain, run it for
$O(n\log n)$ steps so that nearby starting points have overlapping endpoint
laws, and compare its flow with that of CHAR. Their norm distance and the
cross-ratio/Hilbert-metric isoperimetry of Lov\'asz--Simonovits then turn this
multistep overlap into an $s$-conductance estimate. As in the
Laddha--Vempala argument, a boundary layer must first be removed and the resulting $s$-conductance bound can only give polynomial time mixing from a warm start.

Narayanan, Rajaraman, and Srivastava subsequently removed this obstruction by
using a Whitney-type multiscale decomposition~\cite{NarayananRajaramanSrivastava2025}.
Under $r_\infty B_\infty^n\subset K\subset R_\infty B_\infty^n$,
they prove the cold-start bound
\[
 O\!\left(n^9(R_\infty/r_\infty)^2
             \log(M/\varepsilon)\right).
\]
Their axis-disjoint separator theorem has the form
\[
 \vol(S_3)\ge c\frac{\Phi_{\mathcal M_\infty}}{n^{3/2}}
       \min\{\vol(S_1),\vol(S_2)\},
\]
where $\Phi_{\mathcal M_\infty}$ is the conductance of their Whitney-cube
chain.  They show that
$\Phi_{\mathcal M_\infty}\gtrsim(r_\infty/R_\infty)n^{-2}$, which gives the
separator scale $(r_\infty/R_\infty)n^{-7/2}$. Unlike the tiling used in Laddha-Vempala, their cubes shrink as they approach $\partial K$, and adjacent cubes
have comparable scales.  Analytically this corresponds to a degenerate
Finsler metric that magnifies distances by the reciprocal of the distance to
the boundary.  The metric isoperimetry and multiscale conductance control
axis-disjoint sets at every volume scale instead of throwing away a fixed
boundary layer.

In related work~\cite{Fernandez2026}, the author proved an optimal separator theorem for axis-disjoint sets on the cube, showing that
\[
 \psi_0([0,1]^n)=\Theta(n^{-1/2})
\]
and proved the universal upper bound
\[
 \psi_0(K)=O(n^{-1/2})
\]
for every measurable finite-volume $K\subset\R^n$.  The lower bound on the
cube uses coordinate shaking to reduce arbitrary axis-disjoint sets to a
compressed form to which Harper's vertex-isoperimetric theorem for Hamming graphs applies.  It
improves the cube input in the earlier CHAR analyses by a factor of $n^{1/2}$
and the associated mixing bounds by a factor $n$.  The universal upper bound
also shows that this local reduction to cube isoperimetry is, as a method for
general bodies, essentially tight: no uniformly stronger local cube
coefficient is available.

In later work,
Wadia extended the axis-disjoint framework of Laddha-Vempala beyond that of uniform measures on convex bodies. She considered log-concave measures on $\R^n$ with densities $e^{-V}$ for which $V$ is
$\mu$-strongly convex and $L$-smooth, with condition number
$\kappa=L/\mu$~\cite{Wadia2024}. She showed that the mixing bound, from an $M$-warm start, the resulting bound
is
\[
 O^*\!\left(
   \kappa^2n^{15/2}
   \max\left\{1,\frac1n\log\frac{2M}{\varepsilon}\right\}
 \right).
\]
The proof is based off localizing to a high-probability Euclidean ball, tiling it at a scale
on which log-smoothness makes the density nearly constant, applying the sharp
cube separator inequality and converting the weighted axis-disjoint separator estimate into
an $s$-conductance bound.  Thus the result retains the Laddha--Vempala
architecture, with concentration and local density comparison replacing the
hard boundary of a convex body.

We now mention two complementary lines of work on Gibbs samplers for log-concave measures. 
 Ascolani, Lavenant, and Zanella prove sharp relative-entropy
contraction for random-scan Gibbs sampling under strong log-concavity and
coordinate smoothness~\cite{AscolaniLavenantZanella2024}.  For $N$ coordinate
blocks and condition number $\kappa$, their basic estimate contracts entropy
by the factor $1-1/(\kappa N)$ per update, leading to
$O(\kappa N\log(1/\varepsilon))$ updates.  Their proof uses the variational
structure of conditional resampling together with triangular
Knothe--Rosenblatt transport maps to compare simultaneous and
one-coordinate motion.  More recently, Goyal,
Deligiannidis, and Kantas derived conductance bounds for random-scan and
systematic-scan Gibbs samplers from Poincar\'e or log-Sobolev inequalities and
regularity of the conditional distributions; their proof uses new
three-set isoperimetric inequalities and sequential coupling~\cite{GoyalDeligiannidisKantas2025}. Neither results are directly comparable with uniform sampling from a
convex body and it is not clear if there methods can be used to recover our results.

\subsection{Improved $\ell_0$ isoperimetry and mixing time bounds}
Our main result is an improved $\ell_0$ isoperimetric inequality for convex sets satisfying unconditional
$Q$-regularity.

\begin{theorem}[Global lower bound and small-set expansion]\label{thm:main}
There is a universal constant $c>0$ such that, for every $n\ge2$, every
$r,R>0$, every unconditional convex body $Q\subset\R^n$, every pair of
centers $x_0,y_0$, and every convex body $K$ satisfying
$x_0+rQ\subset K\subset y_0+RQ$, every Borel set $S\subset K$ with
$0<\vol(S)\le\vol(K)/2$ satisfies, with
$s=\vol(S)/\vol(K)$ its relative volume,
\begin{equation}\label{eq:set-main}
 \frac{\vol(\partial_0^K S)}{\vol(S)}
 \ge \frac{cr}{nR}
 \min\left\{1,\frac{\log(e/s)}{n}\right\}
 =
 \begin{cases}
  \displaystyle \frac{cr}{n^2R}\log\frac es,
       & s \ge e^{1-n},\\[2mm]
  \displaystyle \frac{cr}{nR},
       & s \le e^{1-n},
 \end{cases}
\end{equation}
Consequently,
\begin{equation}\label{eq:main-bound}
 \frac{cr}{n^2R}\le \xi_0(K)\le\psi_0(K).
\end{equation}
\end{theorem}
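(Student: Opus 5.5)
Throughout, $A=\partial_0^K S$. By translation and dilation invariance I will normalize $x_0=0$, $r=1$, and write $L=y_0+RQ\supset K$, so that $Q\subset K\subset L$. The plan follows the abstract: build a continuum version of canonical paths by discretizing a mass transport from $S$ to $S^c$ into coordinate moves, then bound the congestion at points of $A$.

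\emph{Transport.} Let $T$ be the map $(s\,\vol(K))^{-1}\one_S\,dx \mapsto ((1-s)\vol(K))^{-1}\one_{K\setminus S}\,dx$. I would like $T$ to be a Knothe--Rosenblatt triangular map, or the Brenier map $\bren$; triangular structure is convenient because it already moves one coordinate at a time. Because $K$ is convex, $T(x)\in K$.

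\emph{Paths.} For $x\in S$, define the staircase path from $x$ to $T(x)$ that changes coordinates $1,\dots,n$ in turn, passing through $z_j=(T(x)_1,\dots,T(x)_j,x_{j+1},\dots,x_n)$. These corners need not lie in $K$. The unconditional regularity is what should repair this. I would shrink toward the inner center, replacing the path by one routed through the points $(1-\lambda)z_j+\lambda\cdot 0$ for a fixed $\lambda$, or equivalently run the staircase between $\lambda$-contracted copies. Unconditionality of $Q$ gives
\[
 \mathrm{box}(x,y)\subset \tfrac12(x+y)+\tfrac{R}{?}Q\text{-type sets},
\]
and a point of the coordinate box spanned by $x,y\in L$ lies in $y_0+RQ'$ up to a factor of about $2$. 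So convex combinations with the $rQ$ ball keep the corners inside $K$ provided $\lambda\gtrsim$ const, at the price of paths that scale down by $\lambda$. Each path starts in $S$ and ends in $S^c$. The first coordinate move that leaves $S$ lands in a point of $A$, since consecutive corners differ in one coordinate and lie in $K$. So every $x\in S$ charges one point $\beta(x)\in A$.

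\emph{Congestion.} The statement then reduces to the Jacobian estimate
\[
 \vol(S)\le \sum_{j}\int_{A}\#\{\cdots\}\,\le\, \frac{nR}{c}\,\min\{1,\log(e/s)/n\}^{-1}\vol(A).
\]
To get it, I will randomize the intermediate points, as in canonical paths with a random intermediate step. For instance, take a uniform random $t\in[0,1]$ and the point $(1-t)x+tT(x)$ projected coordinatewise, so that the map $(x,t)\mapsto$ point on segment $j$ has a Jacobian I can compute. For the triangular map this Jacobian is a product of partial derivatives of $T$ and $(1-t)$ factors. The $n$ stages contribute the factor $n$, and the geometric factor $R/r$ comes from the lengths of the moves compared with the thickness $r$. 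I would bound the total via $\log\det$ concavity, using the density ratio $s/(1-s)$ together with the fact that the contraction by $(1-t)$ shrinks volume by $(1-t)^n$. Integrating $t$ over the range where $(1-t)^{-n}\le 1/s$ yields the factor $\min\{1,\log(e/s)/n\}$, exactly as in the classical proof that $\vol((1-t)K+tx)$-type arguments give $\log(1/s)/n$ small-set gains.

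\emph{Consequences and main obstacle.} Inequality \eqref{eq:main-bound} follows by taking $s\le1/2$, so that $\log(e/s)\ge1$. This gives $\xi_0(K)\ge cr/(n^2R)$, and then Proposition~\ref{prop:equivalence} supplies $\xi_0\le\psi_0$. I expect the main obstacle to be the congestion/Jacobian bound. It requires controlling simultaneously the change of variables along each coordinate stage and the non-injectivity of $x\mapsto\beta(x)$. I would first try the Knothe map with radial shrinking and bound the multiplicity through the monotonicity of triangular maps in each coordinate.
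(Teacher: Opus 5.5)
There is a genuine gap. Your framework (transport, coordinate staircases, first-exit charging) is the right one, but the three steps that carry the proof are either wrong or missing.

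\textbf{Containment and the boundary.} Your repair for corners leaving $K$ fails. By unconditionality every corner satisfies $\normQ{z_j-x}\le\normQ{T(x)-x}\le 2R$. But $(1-\lambda)x$ is only guaranteed $Q$-clearance $\lambda r$, so a staircase between contracted copies stays in $K$ only when $1-\lambda\lesssim r/R$, not for constant $\lambda$. In any case the contracted path runs from $(1-\lambda)x$ to $(1-\lambda)T(x)$. It no longer starts in $S$ and ends in $K\setminus S$, so nothing is charged to $\partial_0^KS$.

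What works is to keep the endpoints and subdivide the displacement interpolation $(1-t)x+tT(x)$ into $L\asymp nR/r$ pieces, with one coordinate sweep per piece. Each sweep moves by at most $2R/L\lesssim r/n$ in $Q$-norm, so both endpoints need clearance $\gtrsim r/n$. Consequently you cannot transport all of $S$ onto all of $K\setminus S$: labels near $\partial K$ have no clearance, and their routes have unbounded length.

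The paper handles these labels with the shell reduction (Proposition~\ref{prop:shell-reduction}), which has no counterpart in your plan. The radial depth-doubling map sends the shell $A_u$ onto $A_{2u}$ with Jacobian at least $3/2$. It is realized by $O(nR/r)$ coordinate moves confined to the band $U_u$. The recurrence $\tfrac32a_j\le e_j+a_{j+1}$ then gives $\vol(S)\le C[\tfrac{nR}{r}\vol(\partial_0^KS)+\vol(S\cap H_*)]$. Only $S\cap H_*$ is afterwards transported, into $K^Q_{r/(16n)}\setminus S$.

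\textbf{Congestion and the small-set gain.} The congestion estimate you defer is where the substance lies, and the unsubdivided Knothe staircase does not satisfy it. The corner map $x\mapsto z_j$ is triangular with Jacobian $\prod_{i\le j}\partial_iT_i(x)$. Only the full product is fixed by the density ratio, so intermediate images can be arbitrarily compressed.

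The paper instead uses the Brenier map $\nabla\varphi$, whose Aleksandrov Hessian is symmetric positive semidefinite with $\det\nabla^2\varphi=f/a$. AM--GM gives $\det((1-t)I+t\nabla^2\varphi)\ge(f/a)^t$. Cauchy interlacing shows that a partial sweep costs at most a factor $(1-2/L)^n\ge1/2$. Monotonicity of $\nabla\varphi$ gives injectivity of every intermediate map. Lemma~\ref{lem:monotone-image-volume} converts the determinant bounds into $\vol(\Phi_j(G))\ge\frac12\vol(G)$. Lemma~\ref{lem:first-exit} with $M=O(n^2R/r)$ then yields $cr/(n^2R)$.

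Your small-set heuristic, via the $(1-t)^n$ contraction of a homothety, is also not what produces $\log(e/s)$. The paper transports a source of volume $\asymp s$ onto a target of constant volume, so the time-$\theta$ interpolant expands volume by $m^\theta$ with $m\asymp1/s$. At $\theta\asymp1/\log(1/s)$ at least half the labels have already left $S$. Truncating the routing there shortens it to $O(n+n^2R\theta/r)$ moves.
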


In the proof of Theorem \ref{thm:main} we assume that $K$ has volume 1. This can be done without loss of generality since the relative size of the $\ell_0$ boundary and the ratio of the regularity parameters are invariant under scaling. We use this normalization for every ambient convex body in
Sections~\ref{sec:prelim} and~\ref{sec:lower}. Assuming it, $\vol(S)$ then equals the
relative volume of $S$.  The volume-one definitions in
Section~\ref{sec:prelim} therefore also define $\xi_0$ and $\psi_0$
canonically for an arbitrary unnormalized body by dilation. We also note that the condition $s \le 1/2$ is not essential and that a straightforward, although tedious, modification of the proof shows that the theorem holds so long as $s$ is bounded away from 1 by an absolute constant.

We call a set balanced when its relative volume stays bounded away
from both zero and one.  The balanced-set scale is the order of the
boundary volume in that regime.  Its dependence on $s$ as $s\downarrow0$ is
the \emph{small-set profile}; an improvement over the balanced-set scale that
grows as $s$ decreases is a small-set gain, and the gain saturates
once further decreasing $s$ no longer improves the bound.

The regularity class in Theorem~\ref{thm:main} includes every $\ell_p$
regularity class, $1\le p\le\infty$, and permits different centers for the inner and
outer $Q$-copies.  Under Euclidean regularity, substituting the sharp cube
theorem from~\cite{Fernandez2026} into the Laddha--Vempala argument gives the
balanced-set scale
$c(r/R)n^{-3}$.  Under $\ell_\infty$-regularity, the corresponding sharp-cube
refinement of the multiscale result above gives the same
$c(r_\infty/R_\infty)n^{-3}$ scale.  Theorem~\ref{thm:main} gives
$c(r/R)n^{-2}$ in either matching normalization, an improvement by a factor
of $n$ over these refined bounds.  It also gives a direct small-set gain:
as $s$ decreases, the $\ell_0$ boundary grows by the factor $\log(e/s)$ until
$s\le e^{1-n}$, where it saturates at $cr/(nR)$. Narayana, Srivastava, and Rajaraman also implicitly derive a small-scale profile for their lower bound on $\vol(S_3)$ under $\ell_\infty$ regularity (see \cite[Theorem 5.1, Theorem 5.3, Theorem 6.2]{NarayananRajaramanSrivastava2025}), but our bound is better by a factor of $n$. 

Using Theorem \ref{thm:main} we are able to deduce improved mixing time bounds for CHAR. This is recorded across two remarks. The first remark explains how Theorem \ref{thm:main} holds for an effectively smaller outer regularity parameter, depending on the size of the subset. The second remark records the statements for the improved mixing time for CHAR and their derivations.
\begin{remark}[Effective regularity under isotropicity]
\label{rem:affine-normalization}
As observed by Laddha and Vempala \cite{LaddhaVempala2023}, every full-dimensional convex body $K$ can
be put in isotropic position by an invertible affine map. The normalized body $\widetilde{K}$ has centroid 0 and covariance matrix $I$.  Consequently,
for $X$ uniform on $\widetilde K$,
\[
 R_2^2=\mathbb E\|X\|_2^2=n.
\]
Morever, every isotropic convex body admits the following $\ell_2$ regularity
\cite[Theorem 4.1]{KannanLovaszSimonovits1995}:
\begin{equation}\label{eq:isotropic-deterministic-sandwich}
 \sqrt{\frac{n+2}{n}}B_2^n
 \subset\widetilde K
 \subset\sqrt{n(n+2)}B_2^n.
\end{equation}
In particular, $B_2^n\subset\widetilde K\subset(n+1)B_2^n$.

Although Theorem~\ref{thm:main} assumes an outer containment rather than
a moment bound, Paouris' large-deviation inequality \cite{Paouris2006} recovers the same
$R_2$ dependence and, moreover, gives a useful truncation at smaller volume
scales.  In the present normalization it states that there is a universal
$C>0$ such that, for every $t\ge1$,
\begin{equation}\label{eq:paouris-tail}
 \mathbb P\!\left(\|X\|_2>Ct\sqrt n\right)
 \le e^{-t\sqrt n}
 \qquad (X\sim\operatorname{Unif}(\widetilde K)).
\end{equation}
Set $K_t=\widetilde K\cap Ct\sqrt n B_2^n$.
If $s=\vol(S)/\vol(\widetilde K)\le1/2$ and
\begin{equation}\label{eq:paouris-admissible-scale}
 s\ge e^{-t\sqrt n/4},
\end{equation}
then \eqref{eq:paouris-tail} shows that the discarded volume is at most
$s^2e^{-\sqrt{n}/2}\vol(\widetilde K)$.  Thus $1 \ge \vol(S\cap K_t)/\vol(S) \ge 1 - (1/(2e^{1/2})) \ge 0.69$ and $1 \ge \vol(K_t \cap S)/\vol(S) \ge 1/2$. Writing $\tilde{s} = \vol(K_t \cap S)/\vol(K_t)$ we get that $\tilde{s}/s \in [0.5,1.6]$.
 Now observe that $\vol(\partial_0^{\widetilde K}S) \ge \vol(\partial_0^{K_t} (S \cap K_t))$ and $K_t$ still contains $B_2^n$. Applying the separator form of
Theorem~\ref{thm:main} at relative scale $\tilde{s}$, which is comparable to $s$, gives
\begin{equation}\label{eq:paouris-isoperimetric-profile}
 \frac{\vol(\partial_0^{\widetilde K}S)}{\vol(S)}
 \ge
\frac{c \vol(\partial_0^{K_t}(S \cap K_t))}{\vol(S \cap K_t)}
\ge
 \frac{c}{t n^{3/2}}
       \min\left\{1,\frac{\log(e/s)}{n}\right\}.
\end{equation}
Note that the application is valid since the lower bound in Theorem~\ref{thm:main} is the same when $s$ is scaled by an absolute constant (with a different $c$) and $\tilde{s} \le 0.9$ (so that Theorem \ref{thm:main} applies)
For sets of constant relative volume one may take $t=1$.
More generally, Paouris' inequality allows the outer radius
$R=Ct\sqrt n$ while preserving a constant fraction of every set as small
as $e^{-t\sqrt n/4}$.  This truncation is needed only for
$1\le t\lesssim\sqrt n$: when $t\asymp\sqrt n$, its radius is already
$O(n)$, and
\eqref{eq:isotropic-deterministic-sandwich} places all of $\widetilde K$ in
such a ball.  Thus below relative volume $e^{-cn}$ one simply applies
Theorem~\ref{thm:main} to the whole body with outer radius $O(n)$.
\end{remark}

\begin{remark}[Small-set conductance and cold starts]
\label{rem:conductance-profile}
Let $P_x$ denote the one-step transition law of lazy CHAR
from $x$, and, for $0<s\le1/2$, define
\[
 \Phi_{\mathrm{CHAR}}(s)
 :=\inf_{\substack{A\subset K\ \mathrm{measurable}\\
                    0<\vol(A)\le s\vol(K)}}
    \frac{1}{\vol(A)}\int_A P_x(K\setminus A)\,dx.
\]
Applying the
one-step coupling \cite[Lemma 4]{LaddhaVempala2023} and using
\eqref{eq:set-main}, gives
\begin{equation}\label{eq:char-conductance-profile}
 \Phi_{\mathrm{CHAR}}(s)
 \ge \frac{cr}{n^2R}
       \min\left\{1,\frac{\log(e/s)}{n}\right\}.
\end{equation}
The average-conductance integral is bounded by
\[
 \int_{1/M}^{1/2}\frac{ds}{s\Phi_{\mathrm{CHAR}}(s)^2}
 \le C\left(\frac Rr\right)^2
       \left[n^6+n^4\max\{\log M-n,0\}\right].
\]
For the uniform start on $x_0+rQ$, where $M\le(R/r)^n$, the
average-conductance bound \cite[Theorem~2.2]{LovaszKannan1999} gives 
\[
 T_{\mathrm{mix}}^{\mathrm{CHAR}}(\varepsilon)
 \le C\left(\frac Rr\right)^2
 \left[n^6+n^5\log(R/r)\right]
 \log(1/\eps).
\]

We finally specialize to a convex body $K$ in isotropic position (see Remark~\ref{rem:affine-normalization}), with CHAR initialized from the
uniform measure on $B_2^n\subset K$. We consider this initialization because a convex body can be brought to isotropic position using an affine transformation that is determined from the body's centroid and covariance matrix, both of which can be estimated relatively efficiently and therefore may be viewed as a one-time preprocessing step, and producing a sample in the isotropic position within $\eps$ TV-distance of uniform gives a sample in the original position within $\eps$ TV-distance of uniform after applying the inverse transformation to it.

For a volume scale $s$, put
$L_s=\log(e/s)$.  As long as $L_s\le n/2$, choose
$t_s=\max\left\{1,\frac{4L_s}{\sqrt n}\right\}$. Then \eqref{eq:paouris-admissible-scale} holds and $t_s\le 2\sqrt n$.
For $L_s>n/2$, instead apply Theorem~\ref{thm:main} directly to $K$ using
\eqref{eq:isotropic-deterministic-sandwich}.  Combining the two regimes, we
may use the effective radius
$\rho(s):=\min\bigl\{n,\max\{\sqrt n,L_s\}\bigr\}$.

Combining \eqref{eq:paouris-isoperimetric-profile} with the one-step CHAR
coupling gives the scale-dependent estimate
\begin{equation}\label{eq:isotropic-char-profile}
 \Phi_{\mathrm{CHAR}}(s)
 \ge \frac{c}{n^2\rho(s)}
       \min\left\{1,\frac{L_s}{n}\right\}.
\end{equation}
The unit-ball start is $M$-warm with
$M=\frac{\vol(K)}{\vol(B_2^n)}$.
\eqref{eq:isotropic-deterministic-sandwich} implies that 
 $\log M\le Cn\log(en)$.

Writing the average-conductance integral in the variable
$L=\log(e/s)$, \eqref{eq:isotropic-char-profile} grows up to
$L=\sqrt n$ and then stays at order $n^{-3}$.  Consequently,
\[
 \int_{1/M}^{1/2}
   \frac{ds}{s\Phi_{\mathrm{CHAR}}(s)^2}
 \le C\left(n^7+n^6\log(eM)\right)
 \le Cn^7\log(n).
\]
Thus, the best bound furnished by the Theorem \ref{thm:main} for the mixing time of CHAR starting from this
initialization is
\begin{equation}\label{eq:isotropic-unit-ball-mixing}
 T_{\mathrm{mix}}^{\mathrm{CHAR}}(\varepsilon)
 \le O(n^7\log(n)\log(1/\eps)).
\end{equation}
If one instead modified the above argument to use $\ell_{\infty}$ and instead used the implied small set profile from \cite{NarayananRajaramanSrivastava2025} the mixing time from this initialization would be bounded by $\tilde{O}(n^{10}\log(1/\eps))$. 
\end{remark}

\subsection{An improved upper bound construction}
To complement Theorem \ref{thm:main} we show the existence of a convex body with subsets having $\ell_0$ boundary with relative volume within a factor of $n$ of the lower bound given by Theorem \ref{thm:main} in the constant-sized and exponentially-small sized regimes. The subsets realizing the upper bounds are based on the sign-count mechanism used to prove upper bounds for general convex bodies in \cite{Fernandez2026}.
To begin we prove that subsets of the desired type exist either in $Q$-cones or in $Q$-cylinders. See Section \ref{sec:upper} for the definition of these convex bodies.

\begin{proposition}[$Q$-cylinder]\label{prop:capsule}
There are universal constants $C_0,C,c>0$ such that the following
holds.  Let $n\ge 5$, let $Q\subset\R^n$ be unconditional, let $r>0$ and $L\ge C_0r$. The body $K = K_{L,r}^Q$ contains $(r/2)Q$  and is contained in $(L+r)Q$.
In addition there exist axis-disjoint sets $A,B\subset K_{L,r}^Q$ such that
\[
 \min\{\vol(A),\vol(B)\}\ge c\vol(K_{L,r}^Q) \qquad 
 \frac{\vol(K\setminus (A\cup B))}{\min\{\vol(A),\vol(B)\}} \le \frac{Cr}{nL}.
\]
Thus, in terms of its inner and outer $Q$-radii, the upper bound is
of order $r/(nR)$.
\end{proposition}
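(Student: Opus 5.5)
The plan is to take $K_{L,r}^Q$ to be the Minkowski sum $rQ+[-1,1]\,Lu$, where $u$ is a fixed vector in $Q$ with all coordinates strictly positive. This is a slanted $Q$-cylinder whose axis is not parallel to any coordinate direction. If Section~\ref{sec:upper} normalizes the body differently, for instance to obtain the stated $(r/2)Q$, I would adapt the constants. Containment in $(L+r)Q$ is immediate from $u\in Q$ and convexity. For the inner radius I would use unconditionality: $Q$ is symmetric, so $rQ\subset K$, and in particular $(r/2)Q\subset K$. Every point of $K$ can be written as $x=y+tu$ with $y\in rQ$ and $|t|\le L$. Up to a Jacobian factor and an end-cap effect of relative size $O(r/L)$ (checked in the last step), the uniform measure on $K$ is the product of the uniform measures on $y\in rQ$ and $t\in[-L,L]$.

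For the sets I would use the sign-count mechanism of \cite{Fernandez2026}. Put $N(x)=\#\{i:x_i>0\}$ and fix a threshold $k$ with $n/4\le k\le 3n/4$. Define
\[
A=\{x\in K:N(x)\le k-1\},\qquad B=\{x\in K:N(x)\ge k+1\}.
\]
A coordinate move changes $N$ by at most one, so $A$ and $B$ are axis-disjoint. The separator $K\setminus(A\cup B)$ is $\{N=k\}$. The key observation is one-dimensional. For fixed $y$, all $u_i>0$, so the map $t\mapsto N(y+tu)$ is nondecreasing and jumps by one at each of the points $\tau_i=-y_i/u_i$. Hence the $t$-set where $N=k$ is the gap between the $k$-th and $(k+1)$-st order statistics of $\tau_1,\dots,\tau_n$. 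Summing over $k$, the gaps sum to at most the range $\max_i\tau_i-\min_i\tau_i\le 2r\max_i(h_i/u_i)$, where $h_i=\max_{z\in Q}|z_i|$. Averaging over $k\in[n/4,3n/4]$, some $k$ has expected gap at most $O\bigl(r\max_i(h_i/u_i)/n\bigr)$. Integrating over $y$ and dividing by the $t$-length $2L$ gives a separator of relative volume $O\bigl(r\max_i(h_i/u_i)/(nL)\bigr)$.

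For balance, all $\tau_i$ lie in $[-Cr\max_i(h_i/u_i),\,Cr\max_i(h_i/u_i)]$. Therefore $N=0$ for $t$ below this window and $N=n$ above it, so $A$ and $B$ each contain a constant fraction of $K$ once $L\ge C_0 r\max_i(h_i/u_i)$. The end caps near $t=\pm L$ are handled by the same order-statistics count restricted to $|t|\le L-O(r)$, which costs only $O(r/L)$ of the volume and lies in $A$ or $B$ anyway.

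The main obstacle is the choice of $u$. The bound $Cr/(nL)$ requires $u\in Q$ with $u_i\ge c\,h_i$ for every $i$. This holds for the cube ($u=(1,\dots,1)$), but the naive choice fails for $Q=B_1^n$, where $(h_1,\dots,h_n)/n$ only gives $u_i=h_i/n$. I would try first to avoid needing uniform comparability. The idea is to use a weighted sign count $\sum_i w_i\one[x_i>0]$, or, equivalently, to order the jump times $\tau_i$ by their typical scale under the uniform law on $rQ$. The replacement for the worst-case range bound would be the expected gap at the threshold, controlled through the marginal spreads $\E|y_i|/u_i$ rather than $h_i/u_i$. I expect this step, which is exactly where the precise definition of the slanted cylinder in Section~\ref{sec:upper} enters, to carry the real work.
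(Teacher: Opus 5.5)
There is a genuine gap, and it is exactly the step you flag as carrying ``the real work''. As written, your argument gives $Cr/(nL)$ only when some $u\in Q$ has $u_i\ge c\,h_i$ for every $i$. The general case is left to an unexecuted weighted-count idea.

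Your diagnosis of the obstacle is also off. The problem is not the choice of $u$. It is that you bound the sum of the middle gaps by the full range $\max_i\tau_i-\min_i\tau_i$, which is governed by the extreme coordinates and hence by the worst-case $h_i=\max_{z\in Q}|z_i|$. Take $Q=B_1^n$ rescaled to isotropic position, i.e.\ $\lambda B_1^n$ with $\lambda\asymp n$. The natural axis $u=\one/\|\one\|_Q$ has $u_i\asymp 1$ while $h_i\asymp n$, so your estimate yields a separator of relative size $O(r/L)$, a factor $n$ too weak. Yet the proposition holds for this very $u$ with the plain unweighted count $N(x)=\#\{i:x_i>0\}$.

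The missing idea is to control only the middle order statistics, using an $\ell_1$ bound instead of coordinatewise $\ell_\infty$ bounds.

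\begin{enumerate}[label=(\arabic*)]
\item Reduce to isotropic $Q$ via the positive diagonal map $\Lambda_Q$. It preserves coordinate fibers, sign patterns and volume ratios.
\item Apply Bobkov--Nazarov: $\frac1C B_\infty^n\subset Q\subset CnB_1^n$.
\item The first inclusion gives $q=\|\one\|_Q\le C$. So $a=\one/q$ has $\|a\|_Q=1$ and all entries $\asymp1$; this is the paper's axis $a_Q$.
\item The second inclusion gives $\|v\|_1\le Cn$ for $v\in Q$. This forces $|v_{(j)}|\le Cn/\min\{j,n-j+1\}$, so $|v_{(j)}|\le C$ pointwise for $\lceil n/3\rceil\le j\le\lfloor 2n/3\rfloor$.
\item The middle gaps telescope to $v_{(\lfloor 2n/3\rfloor)}-v_{(\lceil n/3\rceil)}\le C$. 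Hence some deterministic middle $j$ has $\E[V_{(j+1)}-V_{(j)}]\le C/n$ (Lemma~\ref{lem:spacing}).
\item The fiber over $v$ meets $\{N=n-j\}$ in a $t$-interval of length $(rq/L)(v_{(j+1)}-v_{(j)})$. This gives the $Cr/(nL)$ separator.
\item The same pointwise bound puts every point with $|t|\ge Cr/L$ into $A$ or $B$. This gives balance assuming only $L\ge C_0r$, with no comparability between $u_i$ and $h_i$ and no weighted count.
\end{enumerate}

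This same $\ell_1$ counting is also what your end-cap step needs, and that step is not a triviality. Your caps have relative volume $\Theta(r/L)$, which is $n$ times the target. So you must show they lie entirely in $A\cup B$. With $h_i$-based reasoning this again requires $L\gtrsim r\max_i h_i/u_i$. The $\ell_1$ bound instead shows that for $t\ge L-2r$ at most $Cnr/L$ coordinates of $y+tu$ can be nonpositive.

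Note also that the statement concerns the specific body $K_{L,r}^Q=[-L,L]a_Q+rF_Q$, where $F_Q=Q\cap\ker\ell_Q$ and $\ell_Q(a_Q)=1$. This body is the linear image of $[-1,1]\times F_Q$. So it has no end caps, and its cross-sectional law is exactly uniform on $F_Q$ (in your Minkowski sum the middle cross-section is the projection $(I-u\ell)(rQ)$, not $rQ$). The inclusion $(r/2)Q\subset K_{L,r}^Q$ comes from the projection $I-a_Q\ell_Q$ having $Q$-operator norm at most $2$.
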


For $Q=B_2^n$  the body in Proposition \ref{prop:capsule} is the same euclidean cylinder considered in Laddha-Vempala but our axis-disjoint construction for the upper bound is different. Indeed, there construction gives an upper bound of $r/(Rn^{1/2})$ \cite[Lemma 5]{LaddhaVempala2023} while ours shaves off an additional factor of $n^{1/2}$.

\begin{proposition}[$Q$-cone]
\label{prop:cone}
There are universal constants $C_0,C_1,C>0$ such that the following holds:
Let $n\ge 5$, let $Q\subset\R^n$ be unconditional, let $r>0$ and let $L\ge C_0r$.
The body $K=K_{L,r}^{Q,\rm cone}$ contains the translate
$La_Q/2+(r/8)Q$ and is contained in $(L+r)Q$.
For every $0<s\le e^{-C_1n}$, there exist an integer $k$ and a number $u > 0$ with
 $\ceil{n/3} \le k \le \floor{2n/3}$
such that, upon defining
\[
 N_u(x)=\#\{i:x_i\ge uL(a_Q)_i\},
 \qquad
 A_u=\{x\in K:N_u(x)\le k-1\},
\]
one has
\[
 \frac{\vol(A_u)}{\vol(K)}=s
 \qquad\text{and}\qquad
 \frac{\vol(\partial_0^K A_u)}{\vol(A_u)}
 \le \frac{Cr}{L}.
\]
Thus, in terms of its inner and outer $Q$-radii, the upper bound is of order $r/R$.
\end{proposition}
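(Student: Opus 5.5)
The plan is to make every step quantitative through the explicit parametrization of the cone $K=K^{Q,\rm cone}_{L,r}$. I take the cone to be the convex hull of a base copy $rQ$ at the origin and an apex at $La_Q$, where $a_Q$ is a positive-orthant point of $\partial Q$. Each $x\in K$ is then written as $x=tLa_Q+(1-t)rq$ with $t\in[0,1]$ and $q\in Q$, and $dx$ is a constant times $(1-t)^{n-1}\,dt\,dq$ up to the slant Jacobian, which is harmless.

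\emph{Containments.} These follow from convexity and unconditionality of $Q$. The point $La_Q/2$ is the centre of the height-$1/2$ cross-section, which is a copy of $(r/2)Q$. The slant only costs a constant factor once $L\ge C_0r$, which gives $La_Q/2+(r/8)Q\subset K$. The outer containment $K\subset(L+r)Q$ holds because $La_Q\in LQ$ and $rQ\subset rQ$.

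\emph{Choosing $u$ and $k$.} The map $u\mapsto\vol(A_u)$ is continuous and nondecreasing. It tends to $0$ as $u\downarrow-\infty$, where every coordinate exceeds its threshold. Inside $K$ the quantity $N_u(x)$ concentrates, for each fixed height $t$, around $n\,p(u,t)$, where
\[
 p(u,t)=\Pr_q\bigl[q_i\ge (u-t)L(a_Q)_i/((1-t)r)\bigr]
\]
by the sign-count mechanism of \cite{Fernandez2026}. The condition $s\le e^{-C_1n}$ together with a Chernoff/Stirling count lets me pick $k\in[\lceil n/3\rceil,\lfloor 2n/3\rfloor]$ and then $u$ with $\vol(A_u)=s\vol(K)$ by the intermediate value theorem. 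This places the mass of $A_u$ in an atypical but not extreme sign pattern: a positive fraction of the coordinates is on each side of the threshold.

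\emph{Localizing the boundary.} A point $y\in\partial_0^KA_u$ must have $N_u(y)=k$, and some coordinate $i$ with $y_i\ge uL(a_Q)_i$ must be lowerable below threshold along the fiber $K\cap(y+\R e_i)$. At height $t$ that fiber has length at most $Cr(1-t)$. Hence
\[
 \partial_0^KA_u\subset\bigcup_i\{N_u=k,\ y_i\in[uL(a_Q)_i,\,uL(a_Q)_i+Cr(1-t)]\}.
\]

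\emph{Comparing with $A_u$.} I would bound the slab union by $Cr/L$ times $\vol(A_u)$ through a fiberwise exchange. Fix all coordinates but $i$. Along the $e_i$-fiber, the slab of length $Cr(1-t)$ just above threshold is compared with the part of the fiber below threshold, which lies in $\{N_u=k-1\}\subset A_u$. Since the fiber's one-dimensional density is a concave-power (Brunn) profile, that below-threshold part has length of order $L(a_Q)_i(1-u)$. Summing over $i$ produces a factor $k$. The double count of $\{N_u=k-1\}$ through its $n-k+1$ above-threshold-free coordinates gives back a factor $1/(n-k+1)$. Because $k\asymp n-k\asymp n$, these factors cancel to a constant rather than $n$. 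This is exactly why $k$ is taken in the middle third.

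\emph{Main obstacle.} The key step is the fiber comparison, which has to produce the ratio $r/L$ without an extra factor $n$. A naive threshold-shift argument loses that factor. One would note $\partial_0^KA_u\subset A_{u+\delta}\setminus A_u$ and use dilation about the apex, which maps $A_u$ to $A_{u'}$ with $1-u'=\lambda(1-u)$ and scales volume by $\lambda^n$. That only controls the logarithmic derivative of $\vol(A_u)$ by $n/(1-u)$ and so yields $nr/L$. To avoid the loss I would first try to make the exchange count rigorous: the $k$-versus-$(n-k+1)$ double counting, with Brunn concavity on each fiber guaranteeing that the sub-threshold portion is not much shorter than order $L(1-u)$. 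If that fails near the apex, where fibers degenerate, I would use the exponential smallness of $s$ to show that $A_u$ carries negligible mass at heights $t$ with $1-t\ll 1-u$, and discard that region.
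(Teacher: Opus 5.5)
There is a genuine gap, and it sits where you yourself locate the main obstacle.

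\textbf{Setup.} The statement concerns the specific body $K^{Q,\rm cone}_{L,r}=\operatorname{conv}\bigl(\{0\},\,La_Q+rF_Q\bigr)$, where $F_Q=Q\cap\ker\ell_Q$ is an $(n-1)$-dimensional section. Its apex is at the origin. For $u>0$ the sets $\{N_u\le k-1\}$ live near the origin, i.e.\ near the apex, where volume scales like $u^n$; that is where exponential smallness comes from. Your body $\operatorname{conv}(rQ,\{La_Q\})$ instead puts a full-dimensional base at the origin. Your own remark that $\vol(A_u)\to0$ only as $u\downarrow-\infty$ is then incompatible with the required $u>0$. Also, $(t,q)\in[0,1]\times Q$ is $(n+1)$-dimensional, so $dx\propto(1-t)^{n-1}dt\,dq$ is not a change of variables.

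\textbf{The fiberwise exchange.} More seriously, this step cannot produce the factor $r/L$. You bound a coordinate fiber at height $t$ by length $Cr(1-t)$, yet claim its below-threshold part has length of order $L(a_Q)_i(1-u)$. The latter is a sub-segment of the former, so this is impossible once $L\ge C_0r$. (Length of order $L$ is the extent of the cone along its axis $a_Q$, and the coordinate directions are far from that axis.) Once the slab width is the fiber length, the exchange compares the above-threshold and below-threshold pieces of one short segment. That comparison is scale-free and gives no $r/L$.

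The bookkeeping is also off. The identity $\sum_i\vol(\{N_u=k-1,\ y_i<uL(a_Q)_i\})=(n-k+1)\vol(\{N_u=k-1\})$ multiplies by $n-k+1$. A compensating $1/k$ would need every boundary point to lie in $\asymp k$ slabs, which you do not show. Finally, choosing $k$ to fit the volume spends the one free parameter that is actually needed to remove the factor $n$.

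\textbf{The missing idea.} The boundary layer is thin in the radial direction from the apex, not along coordinate fibers, and $k$ should be chosen to optimise the boundary. In the isotropic normalisation write $x=t(La_Q+rv)$ with $v$ uniform on $F_Q$ and volume weight $nt^{n-1}dt$; set $\eps=q_Qr/L$ and $j=n-k$. On each ray, $N_u\le k-1$ exactly for $t\le u/(1+\eps v_{(j+1)})$, and $N_u=k$ exactly for $u/(1+\eps v_{(j+1)})<t\le u/(1+\eps v_{(j)})$. Combined with $\partial_0^KA_u\subset\{N_u=k\}$, which you correctly noted, this gives
\[
 \frac{\vol(\partial_0^KA_u)}{\vol(A_u)}
 \le\frac{\mathbb{E}\bigl[(1+\eps V_{(j)})^{-n}\bigr]}{\mathbb{E}\bigl[(1+\eps V_{(j+1)})^{-n}\bigr]}-1 ,
\]
which is independent of $u$.

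For central $j$ one has $|V_{(j)}|\le C$, from $Q\subset CnB_1^n$. So the product of these ratios over $\lceil n/3\rceil\le j<\lfloor 2n/3\rfloor$ telescopes to at most $\bigl((1+C\eps)/(1-C\eps)\bigr)^n$. Pigeonholing over these $\asymp n$ indices yields a $j$ whose ratio is at most $1+Cr/L$. Only then is $u$ chosen, by continuity of $\vol(A_u)/\vol(K)=u^n\,\mathbb{E}[(1+\eps V_{(j+1)})^{-n}]\in[(2u/3)^n,(2u)^n]$; this is where $s\le e^{-C_1n}$ enters.

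Your naive threshold-shift bound uses a full layer $A_{u+\delta}\setminus A_u$. The paper instead uses the layer $\{N_u=k\}$ between consecutive order statistics, of relative radial thickness about $\eps(V_{(j+1)}-V_{(j)})$, and averages over $j$.
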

Consider now the convex body obtained by glueing $K_{L,r}^{Q,cone}$ to $K_{L,r}^Q$ along a common base (using say the bottom base of $K_{L,r}^Q$) while keeping both pieces symmetric about the all 1s vector. The regularity parameters of this convex body are the same (up to constant factors) as those for the cone and cylinder.
Because $K_{L,r}^{Q,cone}$ and $K_{L,r}^Q$ have the same base and height, with the first being a cone and the second a cylinder, the ratio of their volumes is equal to $1/n$. 
In particular one can construct constant-sized axis-disjoint subsets $A,B$ with separator $C$ satisfying $\vol(C)/\min\{\vol(A),\vol(B)\} \le O(r/(Rn))$ by taking the corresponding subsets from Proposition \ref{prop:capsule} and adding $K_{L,r}^{Q,cone}$ to $A$. In addition, the subsets in Proposition~\ref{prop:cone} are such that their $\ell_0$ boundaries do not intersect the base of $K_{L,r}^{Q,cone}$. In particular the subsets from Proposition \ref{prop:cone} are still exponentially small in this body (with possibly a different constant in the exponent) with $\ell_0$ boundaries having relative volume of order $r/R$. These observations immediately imply the following corollary.
\begin{theorem}[Convex body for general upper bound]\label{thm:upper}
There exists $C,c > 0$ such that the following holds: Let $n\ge 5$, let $Q\subset\R^n$ be an unconditional convex body. Let $R,r>0$ where $R\ge Cr$. Then there exists a convex body $K$ that contains a copy of $rQ$ and is contained in a copy of $RQ$. Furthermore $K$ contains a subset $S$ with relative volume being of constant order, but at most $1/2$, with $\ell_0$ boundary having relative volume $O(r/(nR))$. Lastly for any $0 < s < e^{-cn}$ there is a subset $S'$ with relative volume $s$ with $\ell_0$ boundary having relative volume $O(r/R)$.
\end{theorem}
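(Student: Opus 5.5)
We obtain Theorem~\ref{thm:upper} by gluing the two bodies of Propositions~\ref{prop:capsule} and~\ref{prop:cone}. Given $R\ge Cr$, we set $L:=R-r$ so that $L\ge C_0 r$ once $C$ is large. We take the cylinder $K_1=K^Q_{L,r}$ and the cone $K_2=K^{Q,\rm cone}_{L,r}$, both built symmetrically about the all-ones direction, with the base of $K_2$ identified with the bottom base of $K_1$. Let $K=K_1\cup K_2$.

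\textbf{Convexity and regularity of $K$.} Because the cone has apex in the extension of the axis and its base coincides with the cylinder's cross-section, $K$ is the convex hull of $K_1$ and the apex. Hence $K$ is convex. It contains $K_1\supset (r/2)Q$, and it is contained in a translate of $2(L+r)Q$ (the union of two sets, each inside $(L+r)Q$ up to translation, sharing a face). Thus, after renaming constants, the inner radius is of order $r$ and the outer radius is of order $R$.

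\textbf{The constant-volume set.} Let $A,B$ be the axis-disjoint sets from Proposition~\ref{prop:capsule} in $K_1$, and let $A'=A\cup K_2$, with $A$ chosen as the piece adjacent to the glued base (swap $A$ and $B$ if needed). Since $\vol(K_2)=\vol(K_1)/n$, both $A'$ and $B$ have volume of order $\vol(K)$. The main point to check is that $B$ remains axis-disjoint from $K_2$. For this we use the sign-count structure of the cylinder construction: the set adjacent to the base is closed upward in the coordinates relevant to the glued side. If this fails, we instead place the separator $C'=\partial_0^K A'\setminus A'$ and bound the contribution from moves between $K_2$ and $K_1\setminus A$. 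Such moves stay within an $O(r)$-thick region near the base, so their volume is at most $O(r/L)\vol(K_2)=O(r/(nL))\vol(K)$, which is admissible.

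Take $S$ to be whichever of $A'$ and $B$ is smaller. Its $\ell_0$ boundary lies in $K\setminus(A'\cup B)$ plus this correction, so it has relative volume $O(r/(nR))$. If needed, we trim to ensure relative volume at most $1/2$; the separator from Proposition~\ref{prop:capsule} already gives $\min\le1/2$.

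\textbf{The exponentially small sets.} For $s<e^{-cn}$, we apply Proposition~\ref{prop:cone} inside $K_2$ with target relative volume $s'=s\,\vol(K)/\vol(K_2)=s(n+1)$. We need $s'\le e^{-C_1 n}$, which holds once $c>C_1+1$ (absorbing $\log(n+1)$). This gives $A_u\subset K_2$ with $\vol(\partial_0^{K_2}A_u)\le (Cr/L)\vol(A_u)$.

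The main obstacle is showing that gluing does not enlarge the boundary: $\partial_0^K A_u=\partial_0^{K_2}A_u$. This requires that no coordinate fiber from $A_u$ exits $K_2$ through the base into $K_1$. We plan to verify it from the definition of $A_u$: points with $N_u\le k-1$ lie near the apex side where $x_i<uL(a_Q)_i$ for many coordinates. A single coordinate change reaching the base region would require the other $n-1$ coordinates to already sit at base level, which forces $N_u\ge k$. This is the property asserted in the text preceding the theorem, and we would confirm it using the explicit description of $K_2$ in Section~\ref{sec:upper}. Setting $S'=A_u$ then completes the proof.
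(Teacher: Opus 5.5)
Your route is the paper's: glue the cone to the bottom base of the cylinder, add the cone to $A$, and reuse the cone sets of Proposition~\ref{prop:cone} for tiny $s$. The gap is in the step you yourself single out as the main point of the constant-volume case. It is not established, and your fallback for it fails.

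``The set adjacent to the base is closed upward in the coordinates relevant to the glued side'' is not an argument. Cone points are not coordinatewise comparable to base points, and what actually matters is how many coordinates are positive.

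The fallback is wrong quantitatively. The new boundary points of $A'=A\cup K_2$ lie in $K\setminus A'=K_1\setminus A$, that is, in the cylinder. An $O(r)$-thick slab of the cylinder next to its base has volume $\asymp (r/L)\vol(K_1)\asymp (r/L)\vol(K)$, not $O(r/L)\vol(K_2)$. An $O(r)$-slab of the cone at its base is no better, since it carries a fraction $\approx\min\{1,Cnr/L\}$ of the cone's volume. So the fallback only gives relative boundary $O(r/L)$, losing exactly the factor $n$ this part of the theorem is about.

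The missing ingredient is the $\ell_1$ control already used in Lemma~\ref{lem:spacing}. After the positive diagonal normalization $\Lambda_Q$, which preserves signs and coordinate fibers, one has $a=q^{-1}\one$ with $q\asymp 1$ and $F_Q\subset Q\subset CnB_1^n$.

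\emph{Constant-volume case.} Every point of the glued cone has the form $sa+\mu rv$ with $s\le -L$, $0\le\mu\le 1$ and $v\in F_Q$. Its $i$th coordinate can be positive only if $|v_i|>L/(qr)$. This happens for at most $Cnqr/L<n/3\le n-j$ indices once $L/r$ exceeds a large constant. Hence $K_2\subset\{N<n-j\}$. Now define $A'=\{x\in K:N(x)<n-j\}$ and $B=\{x\in K:N(x)>n-j\}$ by the sign count on all of $K$. Then $A'=A\cup K_2$, and the two sets are axis-disjoint because one coordinate move changes $N$ by at most one. The separator is exactly the old set $M$, so no correction term arises.

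\emph{Small-set case.} The same ingredient is needed. Individual coordinates of $v\in F_Q$ can be of order $n$ (think of the isotropic cross-polytope). So ``the other $n-1$ coordinates already sit at base level'' holds only for all but $O(nr/L)$ of them; this suffices because $k\le 2n/3$. A cleaner version: write $x=\ell(x)a+w$ with $w\in rF_Q$ for $x\in K$. For a coordinate move $x\to y$ inside $K$, comparing the $n-1$ unchanged coordinates gives $(n-1)|\ell(y)-\ell(x)|/q\le\|w_x-w_y\|_1\le 2Crn$. So a coordinate move shifts the axial position by only $O(r)$. The set $A_u$ lies within axial distance $2uL$ of the apex, and $u\le \tfrac32 e^{-C_1}$ is small. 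Hence no move from $A_u$ reaches the cylinder, and $\partial_0^K A_u=\partial_0^{K_2}A_u$.

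A minor point: the cylinder has height $2L$, so $\vol(K_2)=\vol(K_1)/(2n)$. This only changes constants.
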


In view of Theorem \ref{thm:upper} we conjecture that Theorem~\ref{thm:main} can be improved by a factor
of $n$, which would be best possible (at least at the extreme set-size ends). We also believe that the intermediate regime for the lower bound would be optimal, but we didn't try to modify the construction used in Theorem \ref{thm:upper} to detect the $\log(e/s)$ factor from Theorem \ref{thm:main}. Assuming the conjectured lowerbound, the
analysis in Remark~\ref{rem:conductance-profile} would imply that the mixing
time of CHAR for a convex body in isotropic position, starting from the
uniform measure on the unit ball, would be 
$O(n^5\log(n)\log(1/\eps))$.

\subsection{Proof idea: Moving labeled volume through its coordinate boundary.}

Our proof approach for Theorem \ref{thm:main} is quite different from those previously used to analyze the $\ell_0$ boundary \cite{LaddhaVempala2023,NarayananRajaramanSrivastava2025,Wadia2024}. 
It instead follows the same boundary-crossing
motivation as the canonical-path and multicommodity-flow methods, where
demand is routed between states and controlled by edge
congestion~\cite{JerrumSinclair1989,DiaconisStroock1991,Sinclair1992}.
For concreteness we mention how the canonical-path approach is used to lower bound the size of the exterior vertex boundary of a finite graph $G = (V,E)$ (see for instance \cite{IsakssonKindlerMossel2012}). For every vertex subset $S$ of $V$ one constructs a collections of paths $(P_\alpha)_{\alpha \in \Lambda}$ with the following properties:
\begin{itemize}
\item Each path starts in $S$ and ends in $S^c$.
\item For every vertex $v \in V$ there are at most $m$ paths which contain $v$.
\end{itemize}
For a given $P_i$ let $(v_1,v_2)$ be the first edge in $P_i$ where $v_1 \in S$ and $v_2 \in S^c$ (the existence of which is guaranteed by the first property). Then $v_2$ lies in the exterior vertex boundary of $S$. Since every path has such a $v_2$ and at most $m$ paths can have the same $v_2$ the size of the exterior vertex boundary is at least $|\Lambda|/m$. Although the approach seems rather clean for finite graphs, it is not immediate as to how one might translate it to our setting. Indeed, the corresponding graph on our convex body would have each point as a vertex and an edge between any two points that agree on all but one coordinate. In particular the graph would have a continuum number of vertices and edges. It is thus unclear as to how one should define paths, how to define overlap, and how to define a lower bound in terms of the number of paths and the amount of overlap (especially if both quantities are infinite). That said, there is a reasonably nice interpration for our setting, which we describe below.

To explain the approach we introduce some nomenclature. Let $K$ be a convex body and let $A \subseteq K$ be Borel. We will refer to $A$ as a source set. A \emph{label} is a source point tracked through every map in a routing,
the common conull Borel set on which the routing maps are defined as its
\emph{label domain}.  A \emph{coordinate routing} is a finite sequence of Borel maps
$\Phi_0,\ldots,\Phi_m$ on a common label domain such that consecutive images
of each label differ by at most 1 coordinate; for a fixed label $x$, the sequence
$\Phi_0(x),\ldots,\Phi_m(x)$ is its \emph{coordinate staircase}.  The routing
has \emph{compression at most $\gamma^{-1}$} if
$\vol(\Phi_j(G))\ge\gamma\vol(G)$ for every time $j$ and every Borel set of
labels $G$.

We start with the following observation:
Suppose that a bounded-compression coordinate routing of length $m$ transports a labeled set $A$ into a new position $A'$. Let $B \subseteq A$ be the subset of labels which are routed to $A'\setminus A$.
Every label in $B$ has a first-exit time where it cross from $A$ to $\partial_0^K A$. Denote the subset of $B$ with first-exit time $i$ by $B_i$.
Since the routing has bounded compression, the volume of the portion of $\partial_0^K A$ occupied by labels from $B_i$ is at least proportional to the volume of $B_i$. Therefore the volume of $\partial_0^K A$ is at least proportional to the maximum volume of the $B_i$. In particular
\[
 \vol(\partial_0^K A)\gtrsim \max_i \vol(B_i) \ge \frac{1}{m}\sum_i \vol(B_i) = \frac{\vol(B)}{m}
\]
We call this argument \emph{first-exit charging}.
Lemma~\ref{lem:first-exit} is the measure-theoretic form of this statement. From the graph perspective, the maps $\Phi_1,\cdots \Phi_m$ collectively encode the canonical paths from $S$ to $S^c$. The volume of $A' \setminus A$ corresponds to the number of canonical paths and $m$ is analogous to the maximum number of overlaps over a single vertex.

Our stategy for constructing a coordinate routing is based on the following toy example: Suppose we want to route some point $x$ to some point $y$, where $K$ is contained in a copy of $RQ$ and both points are contained in $K_\delta^Q:=\{z:z+\delta Q\subset K\}$. We first identify a sequence of interpolant points $(X_{0}, X_{t_1}, \cdots ,X_{t_m})$ along the segment
$\{X_t=(1-t)x+ty, ~0 \le t \le 1\}$. Since $K$ is convex the interpolant points are in $K_\delta^Q$.  For each pair of consecutive interpolant points $X_{t_i},X_{t_{i+1}}$ we can construct a sequence of intermediate points between $X_{t_i}$ to $X_{t_{i+1}}$ by replacing each coordinate entry of $X_{t_i}$ with the corresponding entry in $X_{t_{i+1}}$, one at a time.
We call this block of at most $n$ coordinate moves a \emph{coordinate
sweep}.  Concatenating the sweeps produces a coordinate staircase from $x$ to $y$.
If $\max_i |t_{i+1}-t_i| \le h$ then
every intermediate point of the staircase lies within $O(Rh)$ of the
segment in $Q$-norm, so $h\lesssim\delta/R$ keeps $X_{t_{i+1}}$ inside $X_{t_i} + \delta Q$ for every $i$ and the coordinate staircase inside $K$.
Thus the coordinate route arising from the coordinate staircase uses $O(nR/\delta)$ coordinate moves and the route transports $x$ to $y$.
\begin{remark}[unconditional $Q$ regularity]
In the above example we implicitly assume that intermediate points between $X_{t_i}$ and $X_{t_{i+1}}$ stay within $\delta$ in $Q$-norm of $X_{t_i}$. This is true when $Q$ is unconditional because the coordinate projection of any point contained in an unconditional convex body stays in the body. For general $Q$ (e.g. a slanted cylinder) this need not hold.  An intermediate point need not be contained in $X_{t_i}+\delta Q,X_{t_{i+1}}+\delta Q$ or even in $K$.
\end{remark}

There are two clear obstacles in trying to construct a general routing from the toy example.
First, a general route must route not one point, but an entire set of positive volume. The route must choose destinations for a positive volume of source points and route all those labels simultaneously while having bounded compression. Second, $x$ and $y$ can be arbitrarily close to the boundary. The closer they are to the boundary the smaller $\delta$ has to be made to keep the staircase inside $K$, causing the routing length to explode.

We now explain how we overcome said obstacles.
Instead of trying to construct a general routing that works for any subset $S$, we construct two types of routings. The first routing deals with the boundary obstacle. We decompose
$K$ into dyadic homothetic shells about the center of the inner
$Q$-copy $rQ$.  On each shell, the radial map that doubles homothetic
depth is realized by a coordinate routing.  If many labels leave $S$, the
first-exit lemma charges them to $\partial_0^K S$; if few leave, the
volume expansion of the depth-doubled image forces the occupancy of $S$ to grow geometrically in
the next shell. Consequently, we deduce that either $\vol(\partial_0^K S)$ is large or a
fixed fraction of the relevant volume of $S$ lies in $K_\delta^Q$, with $\delta\asymp r/n$. Since $\vol(S) \le \vol(S^c)$ we can also guarantee that a fixed fraction of the volume of $S^c$ also lies in $K_\delta^Q$ by further scaling down $\delta$ by an absolute constant.

The simultaneous coordinate-routing problem in $K_\delta^Q$ is the crux of the matter. To pick the source-target pairs we essentially do as follows: Choose
equal-volume sets $E\subset S\cap K_\delta^Q$ and
$F\subset (K\setminus S)\cap K_\delta^Q$, and take $\mu = \frac{\ind_{E}}{\vol(E)}$ and $\nu = \frac{\ind_{F}}{\vol(F)}$. Let
$T=\bren:\R^n\to \R^n$ be the Brenier map that pushes $\mu$ forward to $\nu$ (i.e. $T_{\#}\mu = \nu$).  Its displacement interpolation
$(1-t)x+tT(x)$ assigns a transport ray to almost every source label in $E$. For each transport ray we take a sequence of interpolant points (with the choice of $(t_i)_{0 \le i \le m}$ the same for all transport rays) and use coordinate sweeps to construct the corresponding coordinate staircases from $x$ to $T(x)$.
Monotonicity of $T$ gives injectivity of the staircases (i.e. the $i$th step of each staircase is disjoint from one another).  
The Monge--Amp\`ere equation and stability of determinants along the staircases imply bounded compression when the number of points is sufficiently large. A first-exit charging lemma applied to the coordinate-routing then gives the balanced-set lower bound.

For the small-set case, we carry out almost the same argument but with a small change. Transport a source of volume $a\asymp s$ to a
target of fixed positive volume.  The unequal-volume Brenier map has
Jacobian ratio $m\asymp1/s$, and the displacement interpolation at time
$t$ expands volume by at least $m^t$. This forces a large fraction of labels to leave $S$ prematurely. We can thus truncate the routing at the step where the volume of the image of $S$ under the last map, relative to the volume of $S$, is sufficiently large. This occurs at time $t\asymp1/\log(e/s)$, shortens the length of the coordinate
routing by the reciprocal factor, and yields the logarithmic gain in
\eqref{eq:set-main}.  The gain saturates when the shell alternative already
gives $cr/(nR)$.

We remark here that Brenier maps have long been used to prove isoperimetric
inequalities from geometry and analysis (see for instance \cite{Brenier1991,FigalliMaggiPratelli2010,Villani2003}).  Our use of optimal transport extends this trend to the study of $\ell_0$ isoperimetry.

To conclude this section we outline the rest of the paper.
In section~\ref{sec:prelim} we introduce the isoperimetric coefficients, the first-exit lemma,
and the optimal-transport facts used in the proof. In section~\ref{sec:lower} we
prove the lower bound, first by routing through homothetic shells and then
by applying Brenier transport from the homothetic core. In
section~\ref{sec:upper} we give the cylinder and cone constructions and show that they have the desired $\ell_0$ boundary properties. In section~\ref{sec:remarks} we conclude with a brief discussion about two follow-up questions about $\ell_0$ isoperimetry.

\paragraph{Acknowledgment of AI assistance.}
During the preparation of this work, the author used GPT Pro 5.5 and GPT Pro
5.6 Sol to obtain feedback on the development of some of the ideas and to
assist with some initial drafting and the creation of tikz figures. The author reviewed and edited all
AI-assisted material and takes full responsibility for the content of the
paper, including any errors or omissions.

\section{Notation and preliminaries}\label{sec:prelim}
Throughout the paper we assume that $n\ge2$ and that all sets which we consider are Borel unless stated otherwise. We will always use $C,c > 0$ to denote absolute constants, whose value may change between different appearances (even from line to line or across an inequality). 
In Sections \ref{sec:prelim} and \ref{sec:lower}, we assume that every ambient convex body $K$ is normalized to have volume 1.

We write $[n]=\{1,\ldots,n\}$.  For
Borel sets $S\subset K$ and $i\in[n]$, the incidence set
\[
 \mathcal I_i(S,K)
 =\{(x,y)\in S\times(K\setminus S):
       x_j=y_j\text{ for every }j\ne i\}
\]
is Borel in $\R^n\times\R^n$, and
$\partial_{0,i}^K S=\pi_y(\mathcal I_i(S,K))$.  Thus each $\partial_{0,i}^K S$, and
hence $\partial_0^K S$, is analytic and Lebesgue measurable.  We use
$\vol$ to denote $d$-dimensional Lebesgue measure where $d$ is taken to be the ambient dimension of the set which we are measuring. Consequently the dimension can be different across different appearances of $\vol$. For $p \ge 1$ we write $B_p(x,a)$ to denote the $\ell_p$ ball of radius $a$ centered at $x$.

\begin{definition}[Unconditional  body]\label{def:unconditional}
A convex body $Q\subset\R^n$ is \emph{unconditional} (with respect to
the standard coordinate basis) if
\[
 (q_1,\ldots,q_n)\in Q
 \quad\Longrightarrow\quad
 (\varepsilon_1q_1,\ldots,\varepsilon_nq_n)\in Q
 \quad(\varepsilon_i\in\{-1,1\}).
\]
Such a body is centrally symmetric about the origin, so its Minkowski
functional is a norm.  We call it the $Q$-norm and write
\[
 \normQ{z}=\inf\{a>0:z\in aQ\}.
\]
We say that $K$ has \emph{$Q$-regularity with parameters $(r,R)$} if it
satisfies \eqref{eq:unconditional-regularity} for some centers $x_0,y_0$.
We call $x_0+rQ$ and $y_0+RQ$ the \emph{inner} and \emph{outer
$Q$-copies}, respectively.  When $Q=B_p^n$, we also call this
\emph{$\ell_p$-regularity}.
\end{definition}

\begin{lemma}[Coordinate contractions of unconditional bodies]
\label{lem:unconditional-contractions}
Let $Q$ be unconditional.  If
$\Lambda=\operatorname{diag}(\theta_1,\ldots,\theta_n)$ with
$|\theta_i|\le1$, then $\Lambda Q\subset Q$.  In particular, every coordinate
projection $P_J$, $J\subset[n]$, is a
contraction with respect to $\normQ{\cdot}$.
\end{lemma}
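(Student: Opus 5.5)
The plan is to reduce the matrix statement to the case of a single coordinate sign change and then use convexity. First I will show that for each fixed $i$ and each $\theta\in[-1,1]$, the map $D_i(\theta)=\operatorname{diag}(1,\ldots,1,\theta,1,\ldots,1)$ (with $\theta$ in the $i$th slot) sends $Q$ into $Q$. Take $q\in Q$ and let $q'$ be $q$ with its $i$th coordinate negated. Unconditionality (Definition~\ref{def:unconditional}) with $\varepsilon_i=-1$ and $\varepsilon_j=1$ for $j\ne i$ gives $q'\in Q$. Now
\[
 D_i(\theta)q=\tfrac{1+\theta}{2}\,q+\tfrac{1-\theta}{2}\,q',
\]
and since $\tfrac{1\pm\theta}{2}\in[0,1]$ sum to $1$, convexity of $Q$ gives $D_i(\theta)q\in Q$.

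Next I will factor $\Lambda=D_1(\theta_1)D_2(\theta_2)\cdots D_n(\theta_n)$. The diagonal matrices commute, and each factor has $|\theta_i|\le1$. Applying the single-coordinate step $n$ times in succession gives $\Lambda Q\subset D_1(\theta_1)\cdots D_{n-1}(\theta_{n-1})Q\subset\cdots\subset Q$.

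For the norm statement, take $\theta_j=1$ for $j\in J$ and $\theta_j=0$ otherwise, so that $\Lambda=P_J$. If $z\in aQ$ for some $a>0$, then $P_Jz=a\,P_J(z/a)\in aQ$, because $P_J$ is linear and $P_JQ\subset Q$. Taking the infimum over such $a$ in the definition of $\normQ{\cdot}$ gives $\normQ{P_Jz}\le\normQ{z}$. Since $P_J$ is linear, this also yields $\normQ{P_Jz-P_Jw}\le\normQ{z-w}$, so $P_J$ is a contraction.

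There is no real obstacle here. The only point needing care is handling $\theta_i$ of either sign, including $\theta_i=0$. The convex-combination identity covers the whole interval $[-1,1]$ at once, so no separate case analysis is needed.
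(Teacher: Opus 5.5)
Your proof is correct and is essentially the paper's argument. The paper writes $\Lambda q$ in one step as the expectation of $(\varepsilon_1q_1,\ldots,\varepsilon_nq_n)$ with independent signs satisfying $\mathbb E\varepsilon_i=\theta_i$, which is the same convex combination of sign-flipped copies of $q$ that you build one coordinate at a time via $D_i(\theta)q=\tfrac{1+\theta}{2}q+\tfrac{1-\theta}{2}q'$ and then compose; your deduction of $\normQ{P_Jz}\le\normQ{z}$ from $P_JQ\subset Q$ is also correct and slightly more explicit than the paper's.
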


\begin{proof}
For $q\in Q$, choose independent random signs $\varepsilon_i\in\{-1,1\}$ with
$\mathbb E\varepsilon_i=\theta_i$.  The random vector 
$(\varepsilon_1q_1,\ldots,\varepsilon_nq_n)$ belongs to $Q$ by
unconditionality. Its expectation is $\Lambda q$, so convexity gives $\Lambda q\in Q$.
Taking $\theta_i=\mathbf 1_{\{i\in J\}}$ proves the projection statement.
\end{proof}

\subsection{$\ell_0$ boundary and coefficients}

\begin{definition}[$\ell_0$ distance and axis-disjointness]\label{def:l0}
For $x\in\R^n$, let
\[
 \|x\|_0=\#\{i\in[n]:x_i\ne0\}.
\]
Two sets $A,B\subset\R^n$ are \emph{axis-disjoint} if
$ \|x-y\|_0\ge2 \text{ for every }x\in A,\ y\in B $.
A \emph{coordinate move} is a pair $x,y$ with $\|x-y\|_0\le1$;
equivalently, the two points lie on a common coordinate fiber.
\end{definition}

\begin{definition}[$\ell_0$ boundary]\label{def:boundary}
For $S\subset K$, define
\begin{align*}
 \partial_{0,i}^K S
 &\defeq \{y\in K\setminus S:\exists x\in S\text{ such that }x_j=y_j\text{ for all }j\ne i\},\\
 \partial_0^K S&\defeq\bigcup_{i=1}^n \partial_{0,i}^K S.
\end{align*}
We omit the superscript $K$ when the ambient body is understood from the context.
\end{definition}

\begin{definition}[One-set and separator coefficients]\label{def:coefficients}
Set
\begin{equation}\label{eq:xi0-def}
 \xi_0(K)\defeq
 \inf_{0<\vol(S)\le 1/2}
 \frac{\vol(\partial_0^K S)}{\vol(S)},
\end{equation}
and
\begin{equation}\label{eq:psi-def}
 \psi_0(K)\defeq
 \inf_{\substack{A,B\subset K\\A,B\text{ axis-disjoint}\\
                    \min\{\vol(A),\vol(B)\}>0}}
 \frac{\vol(K\setminus(A\cup B))}
      {\min\{\vol(A),\vol(B)\}}.
\end{equation}
Here the infima are taken over Borel sets.  We call $\xi_0(K)$ the
\emph{one-set coefficient} and $\psi_0(K)$ the
\emph{separator coefficient}; the latter is the $\ell_0$-isoperimetric
form coefficient introduced in~\cite{Fernandez2026}.
\end{definition}

\begin{proposition}[Equivalence of the formulations]\label{prop:equivalence}
For every $n\ge2$ and every volume-one convex body $K\subset\R^n$,
\begin{equation}\label{eq:coefficient-comparison}
 \frac{\psi_0(K)}{1+\psi_0(K)}\le \xi_0(K)\le\psi_0(K).
\end{equation}
Furthermore, if $\xi_0(K)>0$, then
\begin{equation}\label{eq:coefficient-equivalence}
 \left|\frac{\psi_0(K)}{\xi_0(K)}-1\right|=O(n^{-1/2}).
\end{equation}
\end{proposition}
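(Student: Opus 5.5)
The plan is to prove the two inequalities in \eqref{eq:coefficient-comparison} separately. Each follows by converting a competitor for one infimum into a competitor for the other. Then \eqref{eq:coefficient-equivalence} will follow from the universal upper bound $\psi_0(K)=O(n^{-1/2})$ of~\cite{Fernandez2026}, which is quoted in the introduction.

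\emph{Upper bound $\xi_0\le\psi_0$.} Let $A,B\subset K$ be axis-disjoint with positive volumes, and by symmetry assume $\vol(A)\le\vol(B)$. Since $\vol(K)=1$, this gives $\vol(A)\le 1/2$, so $S=A$ is admissible in \eqref{eq:xi0-def}. Take any $y\in\partial_0^K A$. Then $y\in K\setminus A$ and $\|x-y\|_0\le1$ for some $x\in A$. Axis-disjointness forces $y\notin B$, so $\partial_0^K A\subset K\setminus(A\cup B)$. Hence
\[
\xi_0(K)\le\frac{\vol(\partial_0^K A)}{\vol(A)}\le \frac{\vol(K\setminus(A\cup B))}{\min\{\vol(A),\vol(B)\}}.
\]
Taking the infimum over $A,B$ gives $\xi_0(K)\le\psi_0(K)$.

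\emph{Lower bound $\psi_0/(1+\psi_0)\le\xi_0$.} Fix $S$ with $0<\vol(S)\le1/2$ and put $t=\vol(\partial_0^K S)/\vol(S)$. Set $A=S$ and $B=K\setminus(S\cup\partial_0^K S)$. These are axis-disjoint, because any point of $K\setminus S$ one coordinate move away from $S$ lies in $\partial_0^K S$. Their separator $K\setminus(A\cup B)$ is exactly $\partial_0^K S$. We split into two cases.

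\begin{itemize}
\item If $\vol(B)\ge\vol(S)$, then $\psi_0(K)\le t$.
\item Otherwise $\vol(B)=1-\vol(S)-\vol(\partial_0^K S)\ge \vol(S)(1-t)$, using $\vol(S)\le 1/2$. If $t<1$ this gives $\vol(B)>0$ and $\psi_0(K)\le t/(1-t)$. If $t\ge1$, then $t\ge1>\psi_0/(1+\psi_0)$ trivially.
\end{itemize}

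In every case $\psi_0\le t/(1-t)$ or $t\ge1$, and either way $t\ge\psi_0/(1+\psi_0)$, because $u\mapsto u/(1+u)$ is increasing. Taking the infimum over $S$ proves the left inequality. (Measurability of $B$ follows from the measurability of $\partial_0^K S$ noted earlier; if $B$ is only Lebesgue measurable, replace it by a Borel subset of full measure.)

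\emph{Ratio estimate.} From the second step, $\xi_0<1$ implies $\psi_0\le\xi_0/(1-\xi_0)$. Combined with $\xi_0\le\psi_0$, this gives
\[
0\le\frac{\psi_0}{\xi_0}-1\le\frac{\xi_0}{1-\xi_0}.
\]
By the first step and~\cite{Fernandez2026}, $\xi_0\le\psi_0\le Cn^{-1/2}$. For $n$ large enough that $Cn^{-1/2}\le1/2$, the right-hand side is at most $2Cn^{-1/2}$. For the finitely many remaining $n$, note that $\xi_0\le\psi_0\le C'$ for an absolute constant $C'<1$ (again from the universal bound with $n\ge2$). If the cited constant does not yield this, the bounded range of $n$ can be absorbed into the $O(\cdot)$ by using $\xi_0\le1/2$ whenever $\xi_0$ is attained at a set with $t<1$. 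No step seems genuinely hard; the only point requiring care is the bookkeeping in the case $\vol(B)<\vol(S)$, where the constraint $\vol(S)\le1/2$ is essential.
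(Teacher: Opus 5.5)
Your proofs of both inequalities in \eqref{eq:coefficient-comparison} follow the paper's argument. For the upper bound you use the same inclusion $\partial_0^K A\subset K\setminus(A\cup B)$. For the lower bound you use the same competitor $B=K\setminus(S\cup\partial_0^K S)$, the same case split on whether $\vol(B)\ge\vol(S)$, and the same estimate $\vol(B)\ge\vol(S)(1-t)$ coming from $\vol(S)\le1/2$. These parts are correct, including the measurability remark.

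The one weak spot is the ratio estimate. You solve the left inequality for $\psi_0$, getting the bound $\xi_0/(1-\xi_0)$. That bound is only useful if $\xi_0$ stays bounded away from $1$ uniformly over all volume-one convex bodies $K$. For the small values of $n$ where $Cn^{-1/2}>1/2$, neither justification you offer works:
\begin{itemize}
\item The cited universal bound comes with an unspecified constant, so it need not give $\psi_0\le C'<1$ at $n=2$.
\item The phrase ``$\xi_0$ attained at a set with $t<1$'' gives no uniform bound. The infimum need not be attained, and even $\xi_0<1$ for each individual $K$ does not give a bound uniform in $K$.
\end{itemize}

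The paper avoids this entirely. It rearranges the left inequality as $\psi_0\le\xi_0(1+\psi_0)$, which gives
\[
0\le\frac{\psi_0}{\xi_0}-1\le\psi_0=O(n^{-1/2})
\]
for every $n\ge2$ with no case analysis. You can also repair your own route in one line: if $\xi_0\ge1/2$, then $\psi_0/\xi_0-1\le2\psi_0$; if $\xi_0<1/2$, then $\xi_0/(1-\xi_0)\le2\xi_0\le2\psi_0$.
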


\begin{proof}
Let $A,B$ be axis-disjoint and, without loss of generality, assume
$\vol(A)\le\vol(B)$.  No point of $B$ belongs to
$\partial_0^K A$, so
\[
 \partial_0^K A\subset K\setminus(A\cup B).
\]
Since $\vol(A)\le1/2$, this gives $\xi_0(K)\le\psi_0(K)$ after taking
the infimum over all admissible $A$ and $B$.

Conversely, fix $A\subset K$ with $0<\vol(A)\le1/2$, and set
\[
 C=\partial_0^K A,\qquad B_*=K\setminus(A\cup C),\qquad
 a=\vol(A),\quad b=\vol(B_*),\quad c=\vol(C),\quad q=\frac ca.
\]
The set $B_*$ is Lebesgue measurable, so it contains a Borel subset
$B\subset B_*$ with $\vol(B)=b$.  The sets $A$ and $B$ are
axis-disjoint, and
\[
 \vol(K\setminus(A\cup B))=c.
\]
If $b\ge a$, then
$\psi_0(K)\le c/a=q$.  If $b<a$, the normalization gives
\[
 b=1-a-c\ge a-c=a(1-q).
\]
Therefore when $q\ge1$ or $b \ge a$ the desired inequality
$q\ge\psi_0(K)/(1+\psi_0(K))$ is automatic. On the other hand, when $q<1$ and $b < a$ the preceding
display yields
\[
 \psi_0(K)\le\frac cb\le\frac{q}{1-q} \implies 
 q\ge\frac{\psi_0(K)}{1+\psi_0(K)}.
\]
Taking the infimum over $A$ proves the first inequality in
\eqref{eq:coefficient-comparison}.

Finally, \eqref{eq:coefficient-comparison} implies
\[
 0\le\frac{\psi_0(K)}{\xi_0(K)}-1\le\psi_0(K).
\]
The universal bound $\psi_0(K)=O(n^{-1/2})$ from
\cite[Theorem~1.4]{Fernandez2026} proves
\eqref{eq:coefficient-equivalence}.
\end{proof}

\subsection{The first-exit lemma}

The following lemma is the first-exit-charging mechanism used throughout Section 3.
It is analogous to the basic boundary-crossing estimate underlying the
canonical-path and multicommodity-flow methods~\cite{JerrumSinclair1989,
DiaconisStroock1991,Sinclair1992}, but it applies to a chosen pairing of
source and target labels.

\begin{lemma}[First-exit lemma]\label{lem:first-exit}
Let $A\subset S\subset K$ be Borel, and let $m\ge1$.  Suppose
\[
 \Phi_0,\Phi_1,\ldots,\Phi_m:A\to K
\]
are Borel maps such that:
\begin{enumerate}[label=(\roman*)]
\item $\Phi_0(x)=x$ and $\Phi_m(x)\in K\setminus S$ for every
      $x\in A$;
\item for every $j=1,\ldots,m$ and $x\in A$, the points
      $\Phi_{j-1}(x)$ and $\Phi_j(x)$ differ in at most one coordinate;
\item there exists $\gamma>0$ such that, for every $j=1,\ldots,m$ and
      every Borel $G\subset A$,
\begin{equation}\label{eq:bounded-compression}
 \vol(\Phi_j(G))\ge\gamma\vol(G).
\end{equation}
\end{enumerate}

Then
\begin{equation}\label{eq:first-exit-bound}
 \vol(\partial_0^K S)\ge \frac{\gamma}{m}\vol(A).
\end{equation}
\end{lemma}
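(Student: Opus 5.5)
We follow the first-exit charging heuristic from the introduction and make it measure-theoretic. For each label $x\in A$ define the first-exit time
\[
 \tau(x)\defeq\min\{j\in\{1,\ldots,m\}:\Phi_j(x)\notin S\}.
\]
It is well defined by (i), because $\Phi_0(x)=x\in S$ and $\Phi_m(x)\in K\setminus S$. We then partition $A$ into the sets $A_j=\{x\in A:\tau(x)=j\}$, $j=1,\ldots,m$. Each $A_j$ is Borel: it equals $\bigcap_{i<j}\Phi_i^{-1}(S)\cap\Phi_j^{-1}(K\setminus S)$, and the $\Phi_i$ are Borel maps while $S$ and $K$ are Borel. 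Since $\sum_j\vol(A_j)=\vol(A)$, pigeonhole gives an index $j^*$ with $\vol(A_{j^*})\ge\vol(A)/m$.

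Next we show that $\Phi_{j^*}(A_{j^*})\subset\partial_0^K S$. Take $x\in A_{j^*}$ and set $y=\Phi_{j^*}(x)$ and $z=\Phi_{j^*-1}(x)$. Minimality of $\tau$ gives $z\in S$; note that when $j^*=1$ this is simply $z=x\in S$. By construction $y\in K\setminus S$. By (ii), $y$ and $z$ differ in at most one coordinate. They cannot differ in zero coordinates, since one lies in $S$ and the other does not, so they differ in exactly one coordinate $i$. Hence $y\in\partial_{0,i}^K S\subset\partial_0^K S$.

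Finally we apply (iii) with $G=A_{j^*}$, which is a Borel subset of $A$. This gives
\[
 \vol(\partial_0^K S)\ge\vol(\Phi_{j^*}(A_{j^*}))\ge\gamma\vol(A_{j^*})\ge\frac{\gamma}{m}\vol(A),
\]
which is \eqref{eq:first-exit-bound}.

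The only delicate point is measurability. The image $\Phi_{j^*}(A_{j^*})$ of a Borel set under a Borel map is analytic, hence Lebesgue measurable, and $\partial_0^K S$ is measurable by the discussion in Section~\ref{sec:prelim}. So monotonicity of Lebesgue measure applies, and hypothesis (iii) is read with $\vol$ as Lebesgue measure of this analytic image.
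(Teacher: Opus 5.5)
Your proof is correct and follows the paper's argument: you use the same first-exit partition $A_1,\dots,A_m$, the same inclusion $\Phi_j(A_j)\subset\partial_0^K S$, and apply hypothesis (iii) to the $A_j$. The only cosmetic difference is that you pick a single index by pigeonhole, whereas the paper sums $\gamma\vol(A_j)\le\vol(\partial_0^K S)$ over all $j$; the two are equivalent.
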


\begin{proof}
For $x\in A$, define its first-exit time by
\[
 \tau(x)=\min\{j\in\{1,\ldots,m\}:\Phi_j(x)\notin S\}.
\]
For $j=1,\ldots,m$, set
\[
 A_j=\{x\in A:\tau(x)=j\}
 =\left(\bigcap_{\ell=0}^{j-1}\Phi_\ell^{-1}(S)\right)
   \cap\Phi_j^{-1}(K\setminus S).
\]
Here $\Phi_\ell^{-1}(\cdot)$ denotes
$\{x\in A:\Phi_\ell(x)\in \cdot\}$.  Since the maps and $S$ are
Borel, the sets $A_1,\ldots,A_m$ are Borel and form a partition of $A$.

If $x\in A_j$, then $\Phi_{j-1}(x)\in S$ and
$\Phi_j(x)\in K\setminus S$.  These two points differ in at most one
coordinate, so $\Phi_j(x)\in\partial_0^K S$.  Thus
$\Phi_j(A_j)\subset\partial_0^K S$.  The image $\Phi_j(A_j)$ is analytic
and hence Lebesgue measurable, and hypothesis~(c), applied to $G=A_j$,
gives
\[
 \gamma\vol(A_j)
 \le \vol(\Phi_j(A_j))
 \le \vol(\partial_0^K S).
\]
Summing over $j$ and using
$\sum_{j=1}^m\vol(A_j)=\vol(A)$ yields
\[
 \gamma\vol(A)\le m\vol(\partial_0^K S),
\]
which is equivalent to \eqref{eq:first-exit-bound}.
\end{proof}

\subsection{\texorpdfstring{$Q$}{Q}-inner parallel bodies and homothetic depth}

Fix a centrally symmetric body $Q$.  In this subsection we assume
$0\in\operatorname{int}K$; in the lower-bound proof this is achieved by
translating the center of the inner $Q$-copy to the origin. For
$\delta\ge0$, define
the \emph{$Q$-inner parallel body}
\begin{equation}\label{eq:inner-parallel}
 K_\delta^Q=\{x\in K:x+\delta Q\subset K\}.
\end{equation}
Thus $x$ has \emph{$Q$-clearance} at least $\delta$ precisely when
$x\in K_\delta^Q$. Convexity of $K_\delta^Q$ immediately follows from convexity of K and \eqref{eq:inner-parallel}. We refer to the complementary set $K\setminus K_\delta^Q$ as the
\emph{boundary layer}. 
The \emph{Minkowski functional} of $K$ is
\[
 \rho_K(x)=\inf\{\lambda\ge0:x\in\lambda K\},
\]
and the corresponding \emph{homothetic depth} is
\begin{equation}\label{eq:depth}
 \eta(x)=1-\rho_K(x),\qquad x\in K.
\end{equation}
Because $Q$ is centrally symmetric it induces a norm $\|\cdot\|_Q := \rho_Q(\cdot)$. We will use the two notations interchangeably throughout the paper.
For $0\le u\le1$, define the \emph{homothetic level set}
\begin{equation}\label{eq:Hu}
 H_u=(1-u)K=\{x\in K:\eta(x)\ge u\}.
\end{equation}

\begin{lemma}[Homothetic depth and $Q$-clearance]\label{lem:depth-clearance}
If $rQ\subset K$, then
\begin{equation}\label{eq:Hu-Kur}
 H_u\subset K_{ur}^Q\qquad(0\le u\le1),
\end{equation}
and
\begin{equation}\label{eq:minkowski-Lipschitz}
 |\rho_K(x)-\rho_K(y)|\le \frac{\normQ{x-y}}{r}
 \qquad(x,y\in\R^n).
\end{equation}
\end{lemma}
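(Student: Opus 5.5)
Both claims follow from convexity of $K$ together with the assumption $rQ\subset K$ (the inner $Q$-copy centered at the origin after translation). We first prove the Lipschitz estimate \eqref{eq:minkowski-Lipschitz} and then deduce the clearance inclusion \eqref{eq:Hu-Kur} from the same convex-combination identity.

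\textbf{Subadditivity of $\rho_K$.} We use that $\rho_K$ is a gauge: positively homogeneous and subadditive, since $K$ is convex and contains $0$ in its interior. Subadditivity follows from
\[
 \frac{a+b}{\lambda+\mu}=\frac{\lambda}{\lambda+\mu}\frac a\lambda+\frac{\mu}{\lambda+\mu}\frac b\mu
\]
and convexity of $K$. From $rQ\subset K$ we get $\rho_K(z)\le \normQ{z}/r$: if $z\in aQ$, then $z\in (a/r)(rQ)\subset (a/r)K$. Subadditivity then gives
\[
 \rho_K(x)\le\rho_K(y)+\rho_K(x-y)\le \rho_K(y)+\normQ{x-y}/r .
\]
Exchanging $x$ and $y$ (note $\normQ{\cdot}$ is symmetric because $Q$ is centrally symmetric) proves \eqref{eq:minkowski-Lipschitz}. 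Closedness of $K$ gives $x\in\rho_K(x)K$, which is needed below.

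\textbf{Clearance inclusion.} Let $x\in H_u=(1-u)K$, so $x=(1-u)k$ with $k\in K$, and let $q\in Q$. Then
\[
 x+urq=(1-u)k+u(rq),
\]
which is a convex combination of $k\in K$ and $rq\in rQ\subset K$, hence lies in $K$. Therefore $x+urQ\subset K$, i.e. $x\in K_{ur}^Q$. The endpoint cases $u=0$ (trivial) and $u=1$ (where $H_1=\{0\}$ and $rQ\subset K$) fit the same computation.

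The argument is elementary and has no real obstacle. The only points of care are the identification $H_u=(1-u)K=\{\eta\ge u\}$, which uses closedness of $K$ and $0\in\operatorname{int}K$ (given by translating the inner center to the origin), and the symmetry of $\normQ{\cdot}$, which makes the Lipschitz bound two-sided.
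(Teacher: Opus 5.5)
Your proof is correct and follows the paper's argument essentially verbatim. The clearance inclusion comes from the convex combination $(1-u)k+u(rq)$, and the Lipschitz bound comes from subadditivity of $\rho_K$ together with $\rho_K(v)\le\normQ{v}/r$ applied to $v=x-y$ and $v=y-x$; the only differences are that you prove the two parts in the opposite order and that your remark about needing $x\in\rho_K(x)K$ is superfluous, since $H_u$ is defined directly as $(1-u)K$.
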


\begin{proof}
The case $u=0$ in \eqref{eq:Hu-Kur} is immediate.  If $u>0$,
$x=(1-u)z$ with $z\in K$, and $w\in urQ$, then $w/u\in rQ\subset K$ and
\[
 x+w=(1-u)z+u(w/u)\in K
\]
by convexity.  This proves \eqref{eq:Hu-Kur}.  For
\eqref{eq:minkowski-Lipschitz}, subadditivity of the Minkowski functional
gives
\[
 -\rho_K(y-x)\le\rho_K(x)-\rho_K(y)\le\rho_K(x-y).
\]
Since $rQ\subset K$ and $Q$ is centrally symmetric,
\[
 \rho_K(v)\le\rho_{rQ}(v)=\frac{\normQ{v}}r
 \qquad(v\in\R^n).
\]
Applying this to $v=x-y$ and $v=y-x$ proves the claim.
\end{proof}

A useful consequence of \eqref{eq:Hu-Kur} is the boundary-layer estimate
\begin{equation}\label{eq:boundary-layer}
 \vol(K\setminus K_\delta^Q) \le \vol(K\setminus H_{\delta/r})
 = 1-\left(1-\frac{\delta}{r}\right)^n
 \le \frac{n\delta}{r},
 \qquad 0\le\delta\le r.
\end{equation}
In addition \eqref{eq:minkowski-Lipschitz} implies that $\rho_K$ is differentiable almost everywhere and, at all differentiability points $x$, $|\ip{\nabla \rho_K(x)}{v}| \le \frac{\|v\|_Q}{r}$ for all $v \in \R^n$. Thus for such $x$ we may view $\nabla \rho_K(x)$ as a linear functional on $(\R^n,\|\cdot\|_Q)$ and conclude that 
\begin{equation}\label{eq:dualQ-norm-rho}
\|\nabla \rho_K(x)\|_{Q^*} = \sup_{\|y\|_Q = 1} \ip{\nabla \rho(x)}{y}\le \frac{1}{r},
\end{equation}
where $\|\cdot \|_{Q^*}$ is associated dual-norm to $(\R^n,\|\cdot\|_Q)$.
\subsection{Convex transport and image-volume formulas}

Here we record some facts from convex analysis, optimal transport, and measure theory that we will need later.
For a Borel map $T$, the notation $T_\#\mu=\nu$ means that
$\nu(A)=\mu(T^{-1}(A))$ for every Borel set $A$. Convex functions in
this subsection take values in $\R\cup\{+\infty\}$; such a function is
\emph{proper} if it is not identically $+\infty$. For a proper convex
function $\varphi$ we write dom $\varphi$ to denote $\varphi^{-1}(\R)$. Its Legendre--Fenchel conjugate is
\[
 \varphi^*(y)=\sup_{x\in\R^n}
 \bigl\{\ip{x}{y}-\varphi(x)\bigr\}.
\]
At a point $x$ where a convex function $\varphi$ is Aleksandrov twice
differentiable, we write $\nabla^2\varphi(x)$ for the unique symmetric matrix
satisfying
\[
 \varphi(x+h)=\varphi(x)+\ip{\bren(x)}{h}
 +\frac12 h^{\mathsf T}\nabla^2\varphi(x)h+o\bigl(\norm{h}^2\bigr)
 \qquad (h\to0).
\]
Thus $\nabla^2\varphi(x)$ denotes the Aleksandrov Hessian; it agrees with the
classical Hessian whenever the latter exists.

To implement our mass transport argument we will require the existence of a certain measure-preserving map.
This map is typically referred to as the ``Brenier map", whose existence for certain measures was proven by Brenier in \cite{Brenier1991}. 
A more general statement of existence was proven by McCann in \cite{McCann1995}, which we give below.

\begin{theorem}[Brenier--McCann]\label{thm:brenier}
Let $\mu$ and $\nu$ be arbitrary Borel probability measures on
$\R^n$, and assume that $\mu$ is absolutely continuous. Then there exists a
proper convex function $\varphi$ on $\R^n$ such that $\varphi$ is differentiable $\mu$.a.e and a Borel representative
$T=\bren$ such that
$T_\#\mu=\nu$.  If $\nu$ is also absolutely
continuous, then, after deleting null Borel sets, $T$ is a Borel
bijection whose inverse is $\bren^*$.
\end{theorem}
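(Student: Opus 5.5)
This result is quoted from~\cite{Brenier1991,McCann1995} rather than proved in the paper. If I were to prove it, I would follow McCann's route, which needs no moment assumptions. It goes through cyclically monotone couplings and Rockafellar's theorem, not through minimizing the quadratic cost. The first step is to produce a coupling $\gamma$ of $\mu$ and $\nu$ whose support is \emph{cyclically monotone}, meaning
\[
\sum_{k=1}^{N}\ip{x_k}{y_k-y_{k-1}}\ge 0
\]
for every finite family $(x_k,y_k)$ in the support, with cyclic indexing.

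To build $\gamma$, I would first treat empirical measures $\mu_N,\nu_N$ with $N$ atoms of mass $1/N$. Among the $N!$ matchings, choose one maximizing $\sum_k\ip{x_k}{y_{\sigma(k)}}$. Maximality is preserved under cyclic reassignment, so its graph is cyclically monotone. Next, approximate $\mu,\nu$ weakly by such empirical measures; $\mu$ and $\nu$ are tight, so the couplings $\gamma_N$ are tight. A weak subsequential limit $\gamma$ has marginals $\mu,\nu$. Its support is still cyclically monotone, since every point of $\operatorname{spt}\gamma$ is a limit of points of $\operatorname{spt}\gamma_N$ and the defining inequality is closed.

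Second, apply Rockafellar's theorem: a cyclically monotone set lies in the graph of $\partial\varphi$ for some proper lower semicontinuous convex $\varphi$. The projection $\pi_x(\operatorname{spt}\gamma)$ has full $\mu$-measure and lies in $\operatorname{dom}\partial\varphi\subset\operatorname{dom}\varphi$. Because $\operatorname{dom}\varphi$ is convex, its boundary is Lebesgue-null, so $\mu$ is concentrated on $\operatorname{int}\operatorname{dom}\varphi$. There $\varphi$ is locally Lipschitz and hence differentiable Lebesgue-a.e., so $\partial\varphi(x)=\{\bren(x)\}$ for $\mu$-a.e.\ $x$. It follows that $\gamma$ is concentrated on the graph of $\bren$, i.e.\ $\gamma=(\Id\times\bren)_\#\mu$, and hence $T_\#\mu=\nu$ with $T=\bren$. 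For a Borel representative, define $T$ as the limit of difference quotients on the Borel set where this limit exists, and as $0$ elsewhere.

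Third, suppose $\nu$ is also absolutely continuous. Using $y\in\partial\varphi(x)\iff x\in\partial\varphi^*(y)$, the same support lies in the graph of $\partial\varphi^*$ viewed from the $y$-side. By the previous paragraph applied to $\nu$, the subdifferential $\partial\varphi^*$ is single-valued $\nu$-a.e. Consequently $\bren^*\circ\bren=\Id$ holds $\mu$-a.e. and $\bren\circ\bren^*=\Id$ holds $\nu$-a.e. After deleting the corresponding Borel null sets, $T$ is a Borel bijection with inverse $\bren^*$.

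I expect the main obstacle to be the first step, namely obtaining a cyclically monotone coupling without finite second moments. Here one cannot appeal to optimality of the quadratic cost. The delicate points are the tightness and closedness argument that passes cyclical monotonicity to the weak limit, and the check that the limit has exactly the marginals $\mu$ and $\nu$.
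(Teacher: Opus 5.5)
Your proposal is correct and is essentially McCann's own argument: a coupling with cyclically monotone support obtained by discrete approximation, then Rockafellar's theorem, then a.e.\ differentiability of $\varphi$ on $\operatorname{int}\operatorname{dom}\varphi$, and for the inverse the duality $y\in\partial\varphi(x)\iff x\in\partial\varphi^*(y)$. The paper gives no proof of its own and simply relies on this argument by citing \cite{McCann1995}.

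One small correction: tightness of the couplings $\gamma_N$ comes from uniform tightness of the weakly convergent sequences $\mu_N,\nu_N$. It does not follow merely from tightness of $\mu$ and $\nu$.
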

Because $T$ is the gradient of a convex function it satisfies monotonicity:
\begin{equation}\label{eq:monotonicity}
 \ip{T(x)-T(y)}{x-y}\ge0.
\end{equation}
We now specialize to our setting.
Suppose that $\mu=a^{-1}\mathbf1_E\,dx$ and
$\nu=f^{-1}\mathbf1_F\,dx$, where $E,F$ are bounded Borel sets of
positive volume $a,f$. By convexity $H(x) = \nabla^2 \varphi(x)$ is positive semi-definite at every point where the Aleksandrov Hessian of $\varphi$ exists. Since the Brenier map satisfies the Monge--Amp\`ere equation between $\mu$ and $\nu$ (see \cite[Remark~4.5]{McCann1997}) 
we have
\begin{equation}\label{eq:Hessian-facts}
 \det H(x)=\frac{f}{a}\qquad \text{for } \mu.a.e ~x\in E.
\end{equation}
In the equal-volume case this becomes
\begin{equation}\label{eq:det-one}
 \det H(x)=1\qquad \text{for } \mu.a.e ~x\in E.
\end{equation}
We now record the relevant image-volume formulas that convert determinant lowerbounds on our transport maps to volume preservation statements.
\begin{lemma}[Lipschitz area formula]
\label{lem:lipschitz-area-formula}
Let $E\subset\R^n$, let $T:E\to\R^n$ be Lipschitz, and let
$G\subset E$ be Lebesgue measurable. Then
\begin{equation}\label{eq:lipschitz-area-formula}
 \int_G |\det DT(x)|\,dx
 =\int_{\R^n}N(T,G,y)\,dy,
\end{equation}
where $N(T,G,y)=\#(G\cap T^{-1}(y))$.  In particular, if $T$ is
injective on $G$ then
\[
\int_G |\det DT(x)|\,dx=  \vol(T(G)).
\]
This is a special case of the area formula for Lipschitz maps
(see \cite[Theorem 3.2.3]{Federer1969}) .
\end{lemma}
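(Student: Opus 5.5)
The plan is to derive the identity directly from Federer's area formula \cite[Theorem 3.2.3]{Federer1969}, so the work is mostly in the reductions; I would also sketch the standard argument behind it. First extend $T$ to all of $\R^n$. Applying the McShane extension to each coordinate gives a Lipschitz map $\widetilde T:\R^n\to\R^n$ that agrees with $T$ on $E$. Here $DT(x)$ is read as $D\widetilde T(x)$, which by Rademacher's theorem exists for a.e.\ $x$. At Lebesgue density points of $E$ where $\widetilde T$ is differentiable, this derivative does not depend on the chosen extension, so the left side of \eqref{eq:lipschitz-area-formula} is well defined. We may also replace $G$ by a Borel set $G'\subset G$ with $\vol(G\setminus G')=0$. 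Since Lipschitz maps send null sets to null sets, this changes $N(T,G,\cdot)$ only on a null set of $y$'s, and it makes $y\mapsto N(T,G',y)$ measurable, because $N$ is then a monotone limit of sums of indicators of images of Borel pieces.

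Next split $G'$ into $Z=\{x:\det D\widetilde T(x)=0\}$ and its complement. On $Z$ the left-hand integrand vanishes, so I need $\vol(\widetilde T(Z))=0$. To get this, cover $Z$ by small cubes on which $\widetilde T$ is within $\eps r$ of its linear approximation $D\widetilde T(x)$. The image of such a cube lies in an $\eps r$-neighbourhood of a degenerate affine image, which has volume $O(\eps)r^n$. Summing over the cubes and letting $\eps\to0$ gives the claim.

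On the complement, use Federer's decomposition lemma. For $\eps>0$ it gives countably many Borel pieces $G_k$ and invertible linear maps $L_k$ such that $\widetilde T\circ L_k^{-1}$ is $(1+\eps)$-bi-Lipschitz on $L_k(G_k)$ and $|\det D\widetilde T|$ is within a factor $(1+\eps)^{\pm n}$ of $|\det L_k|$ on $G_k$. Then $\vol(\widetilde T(B))$ and $\int_B|\det D\widetilde T|$ agree up to a factor $(1+\eps)^{\pm 2n}$ for every Borel $B\subset G_k$. Summing over $k$ gives $\sum_k \vol(\widetilde T(G_k))=\int_{\R^n}N\,dy$, by monotone convergence applied to the indicators, and letting $\eps\to0$ finishes the general identity.

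For the injective case, $N(T,G,y)=\mathbf 1_{T(G)}(y)$, so the right side equals $\vol(T(G))$. The image $T(G)$ is measurable because it is the image of a Borel set up to a null set, and Lipschitz maps preserve null sets. The main obstacle is the decomposition lemma in the third step, namely the countable partition into pieces where the map is uniformly close to a fixed linear map. I would simply cite it from Federer rather than reprove it, which is consistent with the paper treating the lemma as a special case of the general area formula.
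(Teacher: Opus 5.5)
Your proposal is correct and follows the paper, which proves this lemma only by invoking Federer's area formula \cite[Theorem 3.2.3]{Federer1969}. Your McShane extension, the well-definedness of $DT$ at density points, the Borel/null-set reduction that makes $N$ and $T(G)$ measurable, and the sketched zero-Jacobian and decomposition steps are the details that citation packages, with one point in the zero-Jacobian step to tighten: the admissible cube size depends on the point, so the cubes should be chosen by a Vitali-type covering inside a bounded open neighbourhood of $Z$, which keeps their total volume bounded as $\varepsilon\to0$.
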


\begin{lemma}[Monotone image-volume inequality]
\label{lem:monotone-image-volume}
Let $\psi$ be a proper convex function and let
$\Omega=\operatorname{int}(\operatorname{dom}\psi)$.  Define
\[
 J_\psi(x)=\det \nabla^2\psi(x)
\]
at Aleksandrov twice-differentiability points, and set $J_\psi=0$
elsewhere.  Let $M\subset\Omega$ consist of the points at which
$\nabla^2\psi$ exists and is invertible and which are Lebesgue points of
$J_\psi$.  Then every Borel set $G\subset M$ satisfies
\begin{equation}\label{eq:monotone-volume-inequality}
 \vol(\nabla\psi(G))
 \ge \int_G \det \nabla^2\psi(x)\,dx.
\end{equation}
If $\nabla\psi$ is injective on $M$, then equality holds.
\end{lemma}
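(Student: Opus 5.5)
The inequality \eqref{eq:monotone-volume-inequality} will follow from the Lipschitz area formula (Lemma~\ref{lem:lipschitz-area-formula}) applied to Lipschitz pieces of $\nabla\psi$, combined with the injectivity of $\nabla\psi$ at points of $M$ modulo a null set. The first step is a decomposition of $M$. For $k\in\mathbb N$, let $M_k$ be the set of $x\in M$ such that $B_2(x,1/k)\subset\Omega$ and
\[
 \bigl|\psi(x+h)-\psi(x)-\ip{\nabla\psi(x)}{h}-\tfrac12 h^{\mathsf T}\nabla^2\psi(x)h\bigr|\le \norm{h}^2
\]
whenever $\norm{h}\le 1/k$, and also $\|\nabla^2\psi(x)\|\le k$ and $\|\nabla^2\psi(x)^{-1}\|\le k$. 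These sets increase to $M$. Using monotonicity of $\nabla\psi$ together with the uniform second-order expansion, the standard argument behind Aleksandrov's theorem shows that $\nabla\psi$ is Lipschitz on small pieces of $M_k$ with constant $O(k)$. The same expansion also gives a lower bound $\norm{\nabla\psi(x)-\nabla\psi(y)}\ge c_k\norm{x-y}$ for nearby $x,y\in M_k$. Cutting each $M_k$ into countably many Borel pieces of small diameter, $\nabla\psi$ restricted to each piece $P$ is therefore bi-Lipschitz.

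The second step is to pass to a Lipschitz extension and identify its Jacobian. On each piece $P$, extend $\nabla\psi|_P$ to a Lipschitz map $T_P$ on $\R^n$ (Kirszbraun). At Lebesgue density points of $P$ where $T_P$ is differentiable, $DT_P(x)=\nabla^2\psi(x)$, because the second-order expansion pins down the derivative along density directions. Hence $|\det DT_P|=J_\psi$ a.e.\ on $P$. Since $T_P$ is injective on $P$ by the bi-Lipschitz bound, Lemma~\ref{lem:lipschitz-area-formula} gives
\[
 \vol(\nabla\psi(G\cap P))=\int_{G\cap P}J_\psi\,dx .
\]

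The third step globalizes, and this is where I expect the main difficulty: the pieces $P$ can have overlapping images under $\nabla\psi$. Disjointify by listing the pieces as $P_1,P_2,\ldots$ and setting $G_j=G\cap P_j\setminus\bigcup_{i<j}P_i$. Then $\vol(\nabla\psi(G))=\vol\bigl(\bigcup_j\nabla\psi(G_j)\bigr)$, and this is at least $\sum_j\vol(\nabla\psi(G_j))$ only if the images are almost disjoint. I will prove almost-disjointness via the conjugate. Off a null set of $y$, $\nabla\psi^*$ is differentiable at $y$ (Rademacher/Alexandrov for $\psi^*$). If $y=\nabla\psi(x)$, then $x\in\partial\psi^*(y)$, so $x$ is determined by $y$ at such $y$. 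Hence the set of $y$ having two preimages in $M$ is contained in the set where $\partial\psi^*$ is multivalued, which is null. Consequently $N(\nabla\psi,G,y)\le1$ for a.e.\ $y$, the images $\nabla\psi(G_j)$ are a.e.\ disjoint, and summing gives \eqref{eq:monotone-volume-inequality}. In fact this argument yields equality. If a degenerate case slips through, I would settle for the inequality by bounding $\int_G J_\psi=\sum_j\int_{G_j}J_\psi=\sum_j\vol(\nabla\psi(G_j))$ and noting that the images cover $\nabla\psi(G)$ with multiplicity at most one a.e. Under the extra hypothesis that $\nabla\psi$ is injective on $M$, the images $\nabla\psi(G_j)$ are genuinely disjoint, and equality is immediate. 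The Lebesgue-point hypothesis on $J_\psi$ is what I would use to make the identification $DT_P=\nabla^2\psi$ robust in the second step, if the density-point argument alone proves delicate.
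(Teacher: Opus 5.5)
Your argument is correct in outline, and it takes a genuinely different route from the paper. The paper's proof is two lines. It imports from McCann's Corollary~4.3 \cite{McCann1997} the identity $(\nabla\psi)_\#(J_\psi\,dx)=\mathcal L^n\restriction_{\nabla\psi(M)}$. From this, $\vol(\nabla\psi(G))=\int_{(\nabla\psi)^{-1}(\nabla\psi(G))}J_\psi\,dx\ge\int_G J_\psi\,dx$, with equality once $(\nabla\psi)^{-1}(\nabla\psi(G))\cap M=G$.

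You instead rebuild that change of variables from the Lipschitz area formula already recorded in the paper, in three steps:
\begin{enumerate}
\item a countable decomposition of $M$ into pieces on which $\nabla\psi$ is bi-Lipschitz;
\item identification of the derivative of a Lipschitz extension with $\nabla^2\psi$ at density points;
\item a.e.\ single-valuedness of $\partial\psi^*$, to get multiplicity at most one.
\end{enumerate}
The third step is genuinely needed, since subadditivity runs the wrong way for the inequality; your ``fallback'' is the same argument, not an independent one.

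What your route buys is self-containedness and a slightly sharper statement. The Lebesgue-point condition in the definition of $M$ is never used, and equality holds unconditionally. In fact the injectivity clause is automatic even pointwise. If $\nabla\psi(x)=\nabla\psi(x')=p$ with $x\in M$, then $\psi$ is affine with slope $p$ on the segment $[x,x']$. Unless $x'=x$, this contradicts $\psi(x+h)-\psi(x)-\ip{p}{h}\ge\frac12h^{\mathsf T}\nabla^2\psi(x)h-o(\norm{h}^2)$ with $\nabla^2\psi(x)$ positive definite. The price of your route is measure-theoretic bookkeeping that the citation hides.

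Three points in your write-up need repair.
\begin{enumerate}
\item Your $M_k$ is inconsistent with the claimed lower bi-Lipschitz bound. That bound comes from $\ip{\nabla\psi(y)-\nabla\psi(x)}{y-x}\ge\bigl(\frac1{2k}-\eps\bigr)\norm{y-x}^2$, where $\eps$ is the tolerance in the second-order expansion at $x$. With your tolerance $\norm{h}^2$ (that is, $\eps=1$, while $k\ge1$) it gives nothing. Take tolerance $\frac1{4k}\norm{h}^2$ and make the radius a separate index, i.e.\ sets $M_{k,j}$ with $\norm{h}\le 1/j$. A single index no longer works, because an $o(\norm{h}^2)$ error need not be below $\frac1{4k}\norm{h}^2$ on the ball of radius $1/k$ for any $k$.
\item The identification $DT_P(x)=\nabla^2\psi(x)$ rests on the first-order expansion $\nabla\psi(y)=\nabla\psi(x)+\nabla^2\psi(x)(y-x)+o(\norm{y-x})$ for differentiability points $y$ near an Aleksandrov point $x$. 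This is standard: test the subgradient inequality at $y$ against displacements of length $\sqrt{\eps}\,\norm{y-x}$. But it is the real content of that step and should be stated.
\item In the third step you need differentiability of $\psi^*$, not of $\nabla\psi^*$, at a.e.\ $y$.
\end{enumerate}
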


\begin{proof}
By Corollary~4.3 of McCann~\cite{McCann1997}, $J_\psi$ is locally
integrable and
\[
 (\nabla\psi)_\#(J_\psi\,dx)
 =\mathcal L^n\!\restriction_{\nabla\psi(M)}.
\]
For $B=\nabla\psi(G)\subset\nabla\psi(M)$, it follows that
\[
 \vol(B)
 =\int_{(\nabla\psi)^{-1}(B)}J_\psi(x)\,dx
 \ge\int_GJ_\psi(x)\,dx.
\]
If $\nabla\psi$ is injective on $M$, then
$(\nabla\psi)^{-1}(B)\cap M=G$, giving equality.
\end{proof}
\section{The lower bound}\label{sec:lower}

Fix an unconditional $Q$ and a convex body $K$ satisfying
\eqref{eq:unconditional-regularity}.  Fix $S\subset K$ with
$0<\vol(S)\le1/2$.
Translate $K$ and $S$ by $-x_0$, so the center of the inner $Q$-copy is
the origin, and relabel the translated outer center as $y_0$.
Since $K\subset y_0+RQ$, one has $K-K\subset2RQ$.  In particular, because
$0\in K$, every $x\in K$ satisfies
\begin{equation}\label{eq:diam-bound}
 \normQ{x}\le2R,
\end{equation}
and any $x,z\in K$ satisfy $\normQ{x-z}\le2R$.

\subsection{Shell transport to the homothetic core}\label{sec:shell}

For $0<u<1/2$, define the \emph{homothetic shell}
\begin{equation}\label{eq:shell}
 A_u=H_u\setminus H_{2u}=\{x\in K:u\le\eta(x)<2u\}.
\end{equation}
Fix
\begin{equation}\label{eq:u-star}
 u_*=\frac1{16n},\qquad H_*=H_{u_*},
\end{equation}
and call $H_*$ the \emph{homothetic core}.

\begin{figure}[H]
\centering
\begin{tikzpicture}[
  scale=1.02,
  every node/.style={font=\small},
  ray/.style={-{Latex[length=2.1mm]},teal!72!black,line width=0.85pt},
  stair/.style={orange!88!black,densely dashed,line width=0.75pt,
                rounded corners=0.8pt},
  boundary/.style={draw=black!62,line width=0.55pt},
  omitted/.style={draw=black!42,densely dashed,line width=0.45pt},
  point/.style={circle,inner sep=1.35pt},
  interpoint/.style={circle,inner sep=0.90pt}
]
  % Overall view: the dyadic shell hierarchy approaching the fixed core.
  \fill[gray!12]  (0,1.15) ellipse [x radius=3.20,y radius=1.55];
  \fill[blue!5]   (0,1.15) ellipse [x radius=2.95,y radius=1.43];
  \fill[blue!14]  (0,1.15) ellipse [x radius=2.45,y radius=1.19];
  \fill[blue!22]  (0,1.15) ellipse [x radius=1.90,y radius=0.92];
  \fill[blue!30]  (0,1.15) ellipse [x radius=1.43,y radius=0.70];
  \fill[green!17] (0,1.15) ellipse [x radius=0.72,y radius=0.35];

  \draw[boundary,line width=0.95pt]
    (0,1.15) ellipse [x radius=3.20,y radius=1.55];
  \draw[boundary] (0,1.15) ellipse [x radius=2.95,y radius=1.43];
  \draw[boundary] (0,1.15) ellipse [x radius=2.45,y radius=1.19];
  \draw[boundary] (0,1.15) ellipse [x radius=1.90,y radius=0.92];
  \draw[boundary] (0,1.15) ellipse [x radius=1.43,y radius=0.70];
  \draw[boundary,line width=0.8pt]
    (0,1.15) ellipse [x radius=0.72,y radius=0.35];

  % Dashed contours indicate the unpictured dyadic levels on both sides.
  \draw[omitted] (0,1.15) ellipse [x radius=3.13,y radius=1.52];
  \draw[omitted] (0,1.15) ellipse [x radius=3.06,y radius=1.48];
  \draw[omitted] (0,1.15) ellipse [x radius=1.23,y radius=0.60];
  \draw[omitted] (0,1.15) ellipse [x radius=1.04,y radius=0.51];
  \draw[omitted] (0,1.15) ellipse [x radius=0.86,y radius=0.42];

  % Magnified radial slice of the single transition A_u -> A_{2u}.
  \path[fill=blue!22] (-3.20,-2.43) rectangle (-2.55,-1.03);
  \path[fill=blue!14] (-2.55,-2.43) rectangle (0.15,-1.03);
  \path[fill=blue!5]  (0.15,-2.43) rectangle (2.70,-1.03);
  \path[fill=gray!12] (2.70,-2.43) rectangle (3.20,-1.03);
  \draw[boundary,line width=0.8pt]
    (-3.20,-2.43) rectangle (3.20,-1.03);
  \draw[boundary] (-2.55,-2.43)--(-2.55,-1.03);
  \draw[boundary] (0.15,-2.43)--(0.15,-1.03);
  \draw[boundary] (2.70,-2.43)--(2.70,-1.03);

  % One source piece and its depth-doubled image in the next shell.
  \path[fill=red!24,draw=red!70!black,line width=0.75pt]
    (1.45,-1.73) ellipse [x radius=0.30,y radius=0.24];
  \path[fill=blue!42,draw=blue!82!black,line width=0.75pt]
    (-1.05,-1.73) ellipse [x radius=0.40,y radius=0.30];

  \coordinate (source) at (1.45,-1.57);
  \coordinate (target) at (-1.05,-1.93);
  \coordinate (interp) at ($(source)!0.5!(target)$);

  % The transport ray and its coordinate-staircase realization.
  \draw[ray] (source) -- (target);
  \draw[stair]
    (source) -- (interp |- source) -- (interp)
             -- (target |- interp) -- (target);
  \node[interpoint,fill=yellow!75!orange,draw=orange!75!black,thin]
    at (interp) {};
  \node[point,fill=red!78!black] at (source) {};
  \node[point,fill=blue!78!black] at (target) {};

  % Legend, kept entirely to the right of both scales of the diagram.
  \begin{scope}[shift={(3.55,2.70)}]
    \path[draw=black!32,fill=white,rounded corners=1.5pt,line width=0.45pt]
      (0,0) rectangle (5.90,-5.13);
    \node[anchor=west,font=\scriptsize\bfseries] at (0.18,-0.24) {Legend};

    \path[fill=gray!12,draw=black!62,line width=0.55pt]
      (0.18,-0.47) rectangle (0.43,-0.63);
    \node[anchor=west,font=\scriptsize] at (0.55,-0.55)
      {$K\setminus H_u$: omitted shallower shells};

    \path[fill=blue!5,draw=black!45,line width=0.45pt]
      (0.18,-0.86) rectangle (0.43,-1.02);
    \node[anchor=west,font=\scriptsize] at (0.55,-0.94)
      {$A_u=H_u\setminus H_{2u}$: source shell};

    \path[fill=blue!14,draw=black!45,line width=0.45pt]
      (0.18,-1.25) rectangle (0.43,-1.41);
    \node[anchor=west,font=\scriptsize] at (0.55,-1.33)
      {$A_{2u}=H_{2u}\setminus H_{4u}$: image shell};

    \path[fill=blue!22,draw=black!45,line width=0.45pt]
      (0.18,-1.64) rectangle (0.43,-1.80);
    \node[anchor=west,font=\scriptsize] at (0.55,-1.72)
      {$A_{4u}=H_{4u}\setminus H_{8u}$: next shell};

    \path[fill=blue!30,draw=black!45,line width=0.45pt]
      (0.18,-2.03) rectangle (0.43,-2.19);
    \node[anchor=west,font=\scriptsize] at (0.55,-2.11)
      {$H_{8u}\setminus H_*$: further dyadic shells};

    \draw[omitted] (0.18,-2.50)--(0.43,-2.50);
    \node[anchor=west,font=\scriptsize] at (0.55,-2.50)
      {dashed contours: omitted dyadic levels};

    \path[fill=green!17,draw=black!58,line width=0.55pt]
      (0.18,-2.81) rectangle (0.43,-2.97);
    \node[anchor=west,font=\scriptsize] at (0.55,-2.89)
      {$H_*$: homothetic core};

    \path[fill=red!24,draw=red!70!black,line width=0.55pt]
      (0.18,-3.20) rectangle (0.43,-3.36);
    \node[anchor=west,font=\scriptsize] at (0.55,-3.28)
      {\textcolor{red!70!black}{$E\subset A_u$}: source piece};

    \path[fill=blue!42,draw=blue!82!black,line width=0.55pt]
      (0.18,-3.59) rectangle (0.43,-3.75);
    \node[anchor=west,font=\scriptsize] at (0.55,-3.67)
      {\textcolor{blue!82!black}{$T(E)$}: depth-doubled image in $A_{2u}$};

    \node[point,fill=red!78!black] at (0.20,-4.06) {};
    \node[interpoint,fill=yellow!75!orange,draw=orange!75!black,thin]
      at (0.305,-4.06) {};
    \node[point,fill=blue!78!black] at (0.41,-4.06) {};
    \node[anchor=west,font=\scriptsize] at (0.55,-4.06)
      {source, interpolant, and target points};

    \draw[ray] (0.18,-4.43)--(0.43,-4.43);
    \node[anchor=west,font=\scriptsize] at (0.55,-4.43)
      {solid arrow: transport ray};

    \draw[stair]
      (0.18,-4.89)--(0.33,-4.89)--(0.33,-4.74)--(0.45,-4.74);
    \node[anchor=west,font=\scriptsize] at (0.55,-4.82)
      {dashed staircase: coordinate moves};
  \end{scope}
\end{tikzpicture}
\caption{Shell transport toward the homothetic core.  The upper ellipse shows
the dyadic hierarchy $A_u,A_{2u},A_{4u},\ldots$ approaching $H_*$; dashed
contours stand for omitted levels near both $\partial K$ and the core.  The
lower strip illustrates one radial ``slice'' of the transition from $A_u$ to
$A_{2u}$.  The depth-doubling map sends a source piece $E$ to an expanded
image $T(E)$ in $A_{2u}$.  The solid arrow is the transport ray; the
dashed staircase realizes it by coordinate moves through the yellow
interpolant.  Endpoint mass that leaves $S$ is charged to
$\partial_0^K S$; if little mass leaves, volume expansion forces increased
occupancy of the next shell.}
\label{fig:shell-transport}
\end{figure}

\subsubsection{The shell-to-shell transport map}

For $0 < u < 1/4$ the map $t \mapsto 2t-1$ sends the interval
$(1-2u,1-u]$ to
$(1-4u,1-2u]$.  Accordingly, define the radial
map
\begin{equation}\label{eq:radial-map}
 T(x)=\frac{2\rho_K(x)-1}{\rho_K(x)}x
 \qquad\bigl(\rho_K(x)>1/2\bigr).
\end{equation}
Thus $\eta(T(x)) = 1-\rho_K(T(x))=2(1-\rho_K(x)) = 2\eta(x)$. In particular $T(A_u) = A_{2u}$ for $0 < u < 1/4$.

\begin{lemma}[shell-to-shell map]\label{lem:shell-doubling}
For $0<u<1/4$, the map $T$ is a Borel bijection from $A_u$ to
$A_{2u}$.  If
$u\le1/(16n)$, then every Borel $G\subset A_u$ satisfies
\begin{equation}\label{eq:depth-doubling-volume}
 \frac{3}{2}\vol(G) \le \vol(T(G)) \le 2 \vol(G).
\end{equation}
\end{lemma}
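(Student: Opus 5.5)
Since $T$ is purely radial, I will work in polar coordinates adapted to $K$. Every $x\ne0$ is written uniquely as $x=\rho\,\theta$ with $\theta\in\partial K$ and $\rho=\rho_K(x)$. In these coordinates Lebesgue measure is $dx=\rho^{n-1}\,d\rho\,d\sigma_K(\theta)$ for a finite cone measure $\sigma_K$ on $\partial K$; this holds because $0\in\operatorname{int}K$ after the translation at the start of Section~\ref{sec:lower}.

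The map $T$ sends $\rho\theta$ to $(2\rho-1)\theta$, so it preserves $\theta$ and acts on the radius by $g(\rho)=2\rho-1$.

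\emph{Bijectivity and measurability.} The function $g$ is an increasing bijection from $(1-2u,1-u]$ onto $(1-4u,1-2u]$. Combined with the identity $\eta(T(x))=2\eta(x)$ noted above, this makes $T:A_u\to A_{2u}$ a bijection. The inverse is $y\mapsto\frac{\rho_K(y)+1}{2\rho_K(y)}\,y$. Since $\rho_K$ is continuous (it is Lipschitz by \eqref{eq:minkowski-Lipschitz}), both $T$ and its inverse are continuous on the relevant domains, so $T$ is a Borel bijection.

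\emph{Volume comparison.} For Borel $G\subset A_u$, write $G_\theta=\{\rho:\rho\theta\in G\}\subset(1-2u,1-u]$. Then
\[
\vol(G)=\int_{\partial K}\int_{G_\theta}\rho^{n-1}\,d\rho\,d\sigma_K(\theta).
\]
Changing variables $\rho'=2\rho-1$ on each ray gives
\[
\vol(T(G))=\int_{\partial K}\int_{G_\theta}2\,(2\rho-1)^{n-1}\,d\rho\,d\sigma_K(\theta).
\]
So it suffices to bound the pointwise ratio $2\bigl((2\rho-1)/\rho\bigr)^{n-1}$ for $\rho\in(1-2u,1-u]$.

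\emph{Upper bound.} Since $2\rho-1\le\rho$ for $\rho\le1$, the ratio is at most $2$.

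\emph{Lower bound.} We have $(2\rho-1)/\rho=1-(1-\rho)/\rho$. Here $1-\rho<2u\le 1/(8n)$ and $\rho\ge 1-1/(8n)\ge 15/16$, so $(1-\rho)/\rho\le \frac{16}{15}\cdot\frac{1}{8n}$. Bernoulli's inequality then gives
\[
\Bigl(1-\tfrac{2}{15n}\Bigr)^{n-1}\ge 1-\tfrac{2}{15}=\tfrac{13}{15},
\]
hence the ratio is at least $26/15\ge 3/2$. Integrating these pointwise bounds over $\theta$ yields \eqref{eq:depth-doubling-volume}.

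The only step needing care is justifying the polar decomposition with respect to the gauge $\rho_K$ rather than the Euclidean norm. If one prefers to avoid it, there is an alternative. Since $\rho_K$ is positively $1$-homogeneous and Lipschitz, $T$ is locally Lipschitz on $\{\rho_K>1/2\}$. At differentiability points, $DT=\frac{2\rho-1}{\rho}I+\frac{1}{\rho^2}\,x\,\nabla\rho_K(x)^{\mathsf T}$. By the matrix determinant lemma and Euler's identity $\langle\nabla\rho_K(x),x\rangle=\rho$, this gives
\[
\det DT=\Bigl(\tfrac{2\rho-1}{\rho}\Bigr)^{n-1}\Bigl(\tfrac{2\rho-1}{\rho}+\tfrac1\rho\Bigr)=2\Bigl(\tfrac{2\rho-1}{\rho}\Bigr)^{n-1}.
\]
Lemma~\ref{lem:lipschitz-area-formula} with injectivity of $T$ then gives the same integral formula for $\vol(T(G))$, and the pointwise bounds above finish the proof.
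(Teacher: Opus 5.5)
Your proof is correct, and your primary route differs from the paper's; your fallback paragraph is essentially the paper's proof.

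The paper observes that $T$ is Lipschitz on $A_u$, since $\rho_K>1/2$ there and $\rho_K$ is Lipschitz. At differentiability points it computes $DT(x)=\frac{2\rho-1}{\rho}I+\frac{1}{\rho^2}x\nabla\rho_K(x)^{\mathsf T}$. It then applies the rank-one determinant formula \eqref{eq:rank-one-det} with $\langle\nabla\rho_K(x),x\rangle=\rho_K(x)$ to get $\det DT=2\bigl(\frac{2\rho-1}{\rho}\bigr)^{n-1}$, and concludes with the Lipschitz area formula. Its lower bound is the chain $\bigl(\frac{1-4u}{1-u}\bigr)^{n-1}\ge(1-4u)^n\ge1-4un\ge3/4$, in place of your Bernoulli estimate, which gives the slightly better constant $26/15$. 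For bijectivity the paper simply calls it immediate, whereas you write down the continuous inverse explicitly.

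Your main argument disintegrates Lebesgue measure along rays via the cone measure of $K$ and does the one-dimensional substitution $\rho\mapsto2\rho-1$ on each ray. It reaches the same Jacobian factor without Rademacher's theorem, the area formula, or the determinant lemma. The step you flag as needing care is routine: in Euclidean polar coordinates $x=t\omega$ one has $\rho=t\rho_K(\omega)$, so $dx=\rho_K(\omega)^{-n}\rho^{n-1}\,d\rho\,d\omega$. Hence $\sigma_K$ is the pushforward of $\rho_K(\omega)^{-n}d\omega$ under $\omega\mapsto\omega/\rho_K(\omega)$.

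What the paper's route buys is reuse. The same Jacobian analysis, a scalar-plus-rank-one matrix controlled by \eqref{eq:rank-one-det} and $\|\nabla\rho_K\|_{Q^*}\le1/r$, is recycled in the appendix proof of Lemma~\ref{lem:shell-path}. There the maps $\Psi_{\ell,k}$ contain the non-radial term $\delta(1-1/\rho)P_kx$, so a ray-by-ray decomposition is no longer available.
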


\begin{proof}
The fact that it is a Borel bijection is immediate from \eqref{eq:radial-map}. To prove \eqref{eq:depth-doubling-volume} we will use Lemma \ref{lem:lipschitz-area-formula}. The fact that $T$ is Lipschitz follows from \eqref{eq:minkowski-Lipschitz} and the fact that $\rho_K > 1/2$ on $A_u$. Next, for every differentiability point $x \in A_u$ of $\rho_K$  write $x = tz$ where $t = \rho_K(x)$ and $z$ satisfies $\rho_K(z) = 1$. Note that $\rho_K(sz) = s$ for every $s$. Hence $\frac{d}{ds}\rho_K(sz) = 1$. By the multivariable chain rule 
\begin{equation}\label{eq:minkowski-grad}
\frac{d}{ds}\rho_K(sz) =\sum_i \frac{\partial}{\partial x_i}\rho_K(sz) \cdot \frac{d}{ds}sz_i = \ip{\nabla \rho_K(sz)}{z}.
\end{equation}
In particular taking $s = t$ gives $\ip{\nabla \rho_K(x)}{z} = 1$. Next, using the quotient rule and product rule one has

\[
\left(\frac{\partial T}{\partial x_j}\right)_i = \frac{\partial}{\partial x_j} \left(2 - \frac{1}{\rho(x)}\right)x_i
=  \frac{x_i\frac{\partial}{\partial x_j}\rho(x)}{\rho(x)^2} + \left(2 - \frac{1}{\rho(x)}\right)\cdot \ind_{i = j}
\]

Therefore,
\[
DT(x) = \underbrace{\frac{2\rho(x)-1}{\rho(x)} \cdot I}_{=: A} + \underbrace{\frac{1}{\rho(x)^2} \cdot x (\nabla \rho(x))^\top}_{=: uv^\top}
\]
In particular $DT(x)$ is the sum of an invertible matrix and a rank 1 matrix. Recall now the determinant formula for a rank-1 perturbation of an invertible matrix:
\begin{equation}\label{eq:rank-one-det}
\det(A + uv^\top) = \det(A)(1 + v^\top A^{-1}u).
\end{equation}
Applying \eqref{eq:rank-one-det} to $DT(x)$ and simplifying the resulting expression yields
\begin{equation}\label{eq:DT-ident}
\det DT(x) = 2 \cdot \left(\frac{2\rho_K(x)-1}{\rho_K(x)}\right)^{n-1}
\end{equation}
Since $\rho(x) \in (1-2u,1-u]$ one has 
\begin{equation}\label{eq:DT-lb}
\left(\frac{2\rho_K(x)-1}{\rho_K(x)}\right)^{n-1} \ge \left(\frac{1-4u}{1-u}\right)^{n-1} \ge (1-4u)^n \ge 1-4un \ge \frac{3}{4}.
\end{equation}
Since $\rho$ is differentiable almost everywhere we may apply Lemma \ref{lem:lipschitz-area-formula} to $T$ and, estimating det $DT(x)$ with \eqref{eq:DT-ident} and \eqref{eq:DT-lb}, deduce for every Borel $G$ that
\[
\vol(T(G)) = \int_G |\det DT(x)| \in [(3/2)\vol(G),2\vol(G)],
\]
which confirms \eqref{eq:depth-doubling-volume}.
\end{proof}

\subsubsection{Coordinate staircases and localized first-exit charging}

\begin{lemma}[Shell-to-shell transport]\label{lem:shell-path}
Let $u\le1/(16n)$, let $E\subset A_u$ be Borel, and let $T$ be
defined by radial-map defined in \eqref{eq:radial-map}.  There are Borel maps
\[
 \Psi_0,\ldots,\Psi_M:E\to K
\]
\begin{enumerate}[label=(\roman*)]
\item \label{item:shell-route} $\Psi_0=\Id$, $\Psi_M= T$, and the images of a point under consecutive maps differ in at most one coordinate;
\item \label{item:shell-steps} $M\le CnR/r$.
\item \label{item:shell-injective} every $\Psi_m$ is injective on $E$.
\item \label{itemL:shell-bounded-compression} every $\Psi_m$ satisfies
\begin{equation}\label{eq:shell-bounded-compression}
 \vol(\Psi_m(G))\ge\frac12\vol(G)
 \quad\text{for every Borel }G\subset E.
\end{equation}
\item \label{item:shell-localization} every staircase point lies in the \emph{localized shell band}
\begin{equation}\label{eq:Uu}
 U_u=\{z\in K:u/2\le\eta(z)\le5u\}.
\end{equation}
\end{enumerate}
\end{lemma}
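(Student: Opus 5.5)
We realize $T$ along its radial transport rays by coordinate sweeps, as in the toy example of the introduction. For $x\in E$ and $t\in[0,1]$ put $X_t(x)=(1-t)x+tT(x)$. Since $T(x)-x=-\frac{\eta(x)}{\rho_K(x)}x$, we have
\[
X_t(x)=g_t(\rho_K(x))\,x,\qquad g_t(\rho)=1-t\,\frac{1-\rho}{\rho}.
\]
So $X_t$ is a radial rescaling, with $\eta(X_t(x))$ moving monotonically from $\eta(x)\in[u,2u)$ to $2\eta(x)\in[2u,4u)$. Fix a uniform grid $t_i=i/N$, $0\le i\le N$, with $N=\lceil C_1R/r\rceil$ and $h=1/N\le r/(C_1R)$. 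Between $X_{t_i}$ and $X_{t_{i+1}}$ we perform a coordinate sweep, replacing coordinates $1,\dots,n$ one at a time. Each intermediate map is then
\[
\Psi(x)=P_JX_{t_i}(x)+P_{J^c}X_{t_{i+1}}(x)=D_J(\rho_K(x))\,x,
\]
where $J=\{k+1,\dots,n\}$ and $D_J(\rho)$ is diagonal with entries $g_{t_i}(\rho)$ on $J$ and $g_{t_{i+1}}(\rho)$ on $J^c$. Concatenating the sweeps gives $\Psi_0=\Id$, $\Psi_M=X_1=T$, and $M=nN\le CnR/r$. This proves \ref{item:shell-route} and \ref{item:shell-steps}. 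All the maps are Borel because $\rho_K$ is continuous.

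For the localization \ref{item:shell-localization}, note that $\Psi(x)-X_{t_i}(x)=P_{J^c}\bigl(X_{t_{i+1}}(x)-X_{t_i}(x)\bigr)$. By Lemma~\ref{lem:unconditional-contractions} and \eqref{eq:diam-bound},
\[
\normQ{\Psi(x)-X_{t_i}(x)}\le h\,\frac{\eta(x)}{\rho_K(x)}\normQ{x}\le 2u\cdot 2\cdot 2R\,h\le 8uRh .
\]
Then \eqref{eq:minkowski-Lipschitz} gives $|\eta(\Psi(x))-\eta(X_{t_i}(x))|\le 8uRh/r\le u/2$ once $C_1\ge16$. Since $\eta(X_{t_i}(x))\in[u,4u)$, every staircase point has depth in $[u/2,5u]$ and lies in $U_u\subset K$.

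The main work is \ref{item:shell-injective} and \ref{itemL:shell-bounded-compression}. These reduce to a one-dimensional fixed-point argument and a rank-one determinant computation, in the spirit of Lemma~\ref{lem:shell-doubling}.

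\emph{Injectivity.} Suppose $y=\Psi(x)$ and set $s=\rho_K(x)$. Then $x=D_J(s)^{-1}y$, so $s$ solves
\[
\phi(s):=\rho_K\bigl(D_J(s)^{-1}y\bigr)=s .
\]
The entries of $D_J(s)^{-1}$ differ from the common scalar $g_{t_i}(s)^{-1}$ only by factors $1+O(h)$, and they depend smoothly on $s$ with derivative $O(1)$. Combined with \eqref{eq:minkowski-Lipschitz} and the fact that $\rho_K$ is exactly homogeneous along the common scalar part, this should show that $s\mapsto\phi(s)-s$ is strictly monotone on the relevant interval $(1-2u,1-u]$. Hence $s$, and therefore $x$, is uniquely determined by $y$. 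The Lipschitz constant of the non-homogeneous part is $O(hR/r)<1$.

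\emph{Bounded compression.} At differentiability points of $\rho_K$,
\[
D\Psi(x)=D_J(\rho)+D_J'(\rho)\,x\,\nabla\rho_K(x)^{\mathsf T}.
\]
By \eqref{eq:rank-one-det},
\[
\det D\Psi=\det D_J\,\bigl(1+\ip{\nabla\rho_K}{D_J^{-1}D_J'x}\bigr).
\]
First, $\det D_J\ge(1-4u)^n\ge3/4$, exactly as in \eqref{eq:DT-lb}. Second, $D_J^{-1}D_J'=\operatorname{diag}(c_j)$ with $c_j\ge0$. Write $\operatorname{diag}(c_j)=\bar c\,I+\Delta$, where $\bar c\ge0$ and $\|\Delta\|\le Ch$. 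The identity $\ip{\nabla\rho_K(x)}{x}=\rho_K(x)$ from \eqref{eq:minkowski-grad} gives a nonnegative main term. The error is bounded using \eqref{eq:dualQ-norm-rho} and Lemma~\ref{lem:unconditional-contractions}:
\[
|\ip{\nabla\rho_K}{\Delta x}|\le Ch\cdot 2R/r\le 1/10 .
\]
Hence $\det D\Psi\ge\tfrac34\cdot\tfrac9{10}\ge\tfrac12$. Since $\Psi$ is Lipschitz and injective, Lemma~\ref{lem:lipschitz-area-formula} yields \eqref{eq:shell-bounded-compression}.

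I expect the delicate step to be the injectivity argument, specifically checking that the non-homogeneous part has Lipschitz constant $O(hR/r)<1$. If that estimate proves awkward, the fallback is to deduce injectivity from a degree argument using the positive Jacobian.
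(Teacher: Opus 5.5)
Your proposal is correct, and most of it coincides with the paper. Your maps $P_JX_{t_i}+P_{J^c}X_{t_{i+1}}$ are exactly the paper's $\Psi_{\ell,k}(x)=(1-t_\ell)x+t_\ell T(x)+\delta\bigl(1-\frac1{\rho(x)}\bigr)P_kx$. Your localization estimate and rank-one determinant bound are the same computations the paper does; your splitting $D_J^{-1}D_J'=\bar cI+\Delta$ is a tidier form of the paper's $A^{-1}=B_1+B_2$, $u=cx+dP_kx$ bookkeeping.

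The genuinely different step is injectivity. The paper sets $y=\Psi_{\ell,0}(x)$ and uses $\rho(y)-1=(1+t_\ell)(\rho(x)-1)$ to rewrite $\Psi_{\ell,k}(x)=y+\frac{\delta}{1+t_\ell}\bigl(1-\frac1{\rho(y)}\bigr)P_ky$. It then shows the correction term has $\normQ{\cdot}$-Lipschitz constant at most $10R\delta/r\le1/12$. So $\Psi_{\ell,k}$ is bi-Lipschitz relative to the injective radial map $\Psi_{\ell,0}$, which gives injectivity and the Lipschitz property needed for Lemma~\ref{lem:lipschitz-area-formula} in one stroke.

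Your scalar reduction via $\Psi(x)=D_J(\rho_K(x))x$ also works, but your stated justification is slightly off. What controls the non-homogeneous part is not the $O(1)$ derivative of the entries of $D_J(s)^{-1}$, but the $O(h)$ derivative of their ratio. Concretely, $D_J(s)^{-1}y=g_{t_i}(s)^{-1}\bigl(y+\epsilon(s)P_{J^c}y\bigr)$ with $\epsilon(s)=hw/(1-t_{i+1}w)$ and $w=(1-s)/s$. By homogeneity, the fixed-point equation is then equivalent to
\[
 \rho_K\bigl(y+\epsilon(s)P_{J^c}y\bigr)=(1+t_i)s-t_i .
\]
The right side has slope at least $1$. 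Since $|\epsilon'(s)|\le2h$ on $(1-2u,1-u]$, \eqref{eq:minkowski-Lipschitz} and \eqref{eq:diam-bound} make the left side $(4hR/r)$-Lipschitz in $s$. Hence there is at most one solution, and no degree argument is needed.

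Your route makes the radial structure more transparent; the paper's yields quantitative bi-Lipschitz control for free. You should also record that $\Psi$ is Lipschitz on $E$ before applying the area formula. This is immediate, since $\rho_K$ is Lipschitz and at least $1/2$ on $E$.
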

The proof is deferred to the appendix (see \ref{sec:shell-path proof}).

\begin{lemma}[Localized first-exit bound]\label{lem:local-charge}
Let $E\subset S\cap A_u$ be Borel.  Then
\begin{equation}\label{eq:local-exit}
 \vol(T(E)\setminus S)
 \le C\frac{nR}{r}\vol(\partial_0^K S\cap U_u).
\end{equation}
\end{lemma}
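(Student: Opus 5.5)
The plan is to apply the first-exit lemma (Lemma~\ref{lem:first-exit}) to the staircases of Lemma~\ref{lem:shell-path}. The only wrinkle is that the lemma charges to the whole boundary $\partial_0^K S$, whereas we want the charge localized in $U_u$.

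\textbf{Step 1: select the sources.} Let $\Psi_0,\ldots,\Psi_M$ be the maps given by Lemma~\ref{lem:shell-path} for the source set $E$. Put
\[
 A\defeq\{x\in E: T(x)\notin S\}=E\cap T^{-1}(K\setminus S).
\]
This set is Borel because $T$ is Borel. Since $T$ is injective on $A_u$ (Lemma~\ref{lem:shell-doubling}), we have $T(A)=T(E)\setminus S$. The upper bound in \eqref{eq:depth-doubling-volume} then gives
\[
 \vol(T(E)\setminus S)=\vol(T(A))\le 2\vol(A).
\]
So it suffices to show $\vol(A)\le C\frac{nR}{r}\vol(\partial_0^K S\cap U_u)$.

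\textbf{Step 2: run first-exit charging on $A$.} Restrict the $\Psi_m$ to $A\subset S$. Their properties give:
\begin{itemize}
\item $\Psi_0=\Id$ and $\Psi_M=T$, which maps $A$ into $K\setminus S$;
\item consecutive images differ in at most one coordinate;
\item bounded compression \eqref{eq:shell-bounded-compression} with $\gamma=1/2$, valid for Borel $G\subset A\subset E$.
\end{itemize}
Rather than quoting Lemma~\ref{lem:first-exit} verbatim, I would rerun its proof, noting where the exit points land. Define the first-exit time $\tau$ and the partition $A=A_1\sqcup\cdots\sqcup A_M$ as in that proof. For $x\in A_j$, the point $\Psi_j(x)$ lies in $K\setminus S$ and is one coordinate move from $\Psi_{j-1}(x)\in S$, so $\Psi_j(x)\in\partial_0^K S$. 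By the localization property \ref{item:shell-localization}, also $\Psi_j(x)\in U_u$. Hence
\[
 \Psi_j(A_j)\subset\partial_0^K S\cap U_u .
\]
The image $\Psi_j(A_j)$ is analytic, so it is measurable. Bounded compression gives $\tfrac12\vol(A_j)\le\vol(\partial_0^K S\cap U_u)$ for each $j$. Summing over $j$,
\[
 \vol(A)\le 2M\,\vol(\partial_0^K S\cap U_u).
\]

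\textbf{Step 3: conclude.} Insert $M\le CnR/r$ from \ref{item:shell-steps} into the bound of Step 2, and combine with the factor $2$ from Step 1. This yields \eqref{eq:local-exit}.

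\textbf{Degenerate cases and the main obstacle.} If $A$ is null there is nothing to prove. If $M=0$, then $T=\Id$ on $E$; but $T$ strictly increases depth on $A_u$, so in fact $M\ge1$. The only point needing care is that Lemma~\ref{lem:first-exit} as stated does not record where exits occur. That is why I re-derive the charging with the extra inclusion into $U_u$. That inclusion is exactly the localization statement \ref{item:shell-localization}, so I expect no genuine obstacle here; all the difficulty has been deferred to Lemma~\ref{lem:shell-path}.
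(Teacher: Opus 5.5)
Your proposal is correct and matches the paper's proof: restrict to the labels $\{x\in E:T(x)\notin S\}$, bound $\vol(T(E)\setminus S)$ by twice their volume via Lemma~\ref{lem:shell-doubling}, and run first-exit charging along the staircases of Lemma~\ref{lem:shell-path}, with property~\ref{item:shell-localization} placing every first-exit point in $\partial_0^K S\cap U_u$. One small remark: the identity $T(A)=T(E)\setminus S$ does not need injectivity of $T$, since it holds for any map.
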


\begin{proof}
Let $E'=\{x\in E:T(x)\notin S\}$.  By
Lemma~\ref{lem:shell-doubling},
\[
 \vol(T(E)\setminus S)=\vol(T(E'))\le2\vol(E').
\]
Apply Lemma~\ref{lem:first-exit} to the Borel label set $A=E'$ and
the coordinate staircase from Lemma~\ref{lem:shell-path}.  At each
first-exit time, the
endpoint belongs to $\partial_0^K S\cap U_u$.  With $\gamma=1/2$ and
$M\le CnR/r$, the resulting localized first-exit estimate gives the
claim.
\end{proof}

\subsubsection{The dyadic recurrence}

For an integer $J\ge1$, let $\eps=2^{-J}u_*$, $u_j=2^j\eps$, and
\[
 A_j=A_{u_j},\qquad a_j=\vol(S\cap A_j),\qquad 0\le j\le J.
\]
For $j<J$, apply the shell-to-shell map to $E_j=S\cap A_j$, and let
$e_j=\vol(T(E_j)\setminus S)$.

\begin{lemma}\label{lem:recurrence}
For $0\le j<J$,
\begin{equation}\label{eq:recurrence}
 \frac32a_j\le e_j+a_{j+1}.
\end{equation}
Consequently,
\begin{equation}\label{eq:telescoping}
 \sum_{j=0}^{J-1}a_j
 \le C\left(\sum_{j=0}^{J-1}e_j+a_J\right).
\end{equation}
\end{lemma}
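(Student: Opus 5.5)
The plan is to treat the recurrence as a direct volume count using Lemma~\ref{lem:shell-doubling}, and then to sum it as a linear recursion with contraction factor $2/3$.

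\emph{Proof of \eqref{eq:recurrence}.} Fix $0\le j<J$. Since $u_j\le u_{J-1}=u_*/2<1/(16n)$, Lemma~\ref{lem:shell-doubling} applies to $E_j=S\cap A_j\subset A_{u_j}$. It gives that $T$ maps $E_j$ bijectively into $A_{2u_j}=A_{u_{j+1}}=A_{j+1}$, with
\[
\vol(T(E_j))\ge \tfrac32 \vol(E_j)=\tfrac32 a_j .
\]
Now split $T(E_j)$ into the part outside $S$ and the part inside $S$. The first part has volume $e_j$ by definition. The second part lies in $S\cap A_{j+1}$, so its volume is at most $a_{j+1}$. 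Adding the two bounds yields \eqref{eq:recurrence}.

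\emph{Proof of \eqref{eq:telescoping}.} Rewrite \eqref{eq:recurrence} as
\[
a_j\le \tfrac23 e_j+\tfrac23 a_{j+1}.
\]
Iterating this from index $j$ up to $J$ gives
\[
a_j\le \sum_{k=j}^{J-1}\left(\tfrac23\right)^{k-j+1} e_k+\left(\tfrac23\right)^{J-j}a_J .
\]
Sum this over $0\le j\le J-1$ and interchange the order of summation. Each $e_k$ then carries the coefficient $\sum_{i\ge1}(2/3)^i\le 2$, and $a_J$ carries at most $\sum_{i\ge1}(2/3)^i\le 2$. This proves \eqref{eq:telescoping} with $C=2$.

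As an alternative to iterating, one can sum \eqref{eq:recurrence} over $j$ directly:
\[
\tfrac32\sum_{j<J} a_j\le \sum_{j<J} e_j+\sum_{j=1}^{J}a_j\le \sum_{j<J} e_j+\sum_{j<J}a_j+a_J .
\]
Subtracting $\sum_{j<J} a_j$ from both sides gives $\tfrac12\sum_{j<J}a_j\le \sum_{j<J} e_j+a_J$. This is the same bound with $C=2$, and it is simpler.

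There is no real obstacle. The only point to check is that every index used stays in the regime $u\le 1/(16n)$ where Lemma~\ref{lem:shell-doubling} holds, which is true because $u_{J-1}=u_*/2$.
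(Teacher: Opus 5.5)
Your proof is correct and matches the paper's. The recurrence is the same volume count: $T(E_j)\subset A_{j+1}$ has volume at least $\tfrac32 a_j$ by Lemma~\ref{lem:shell-doubling}, and you split it according to $S$. Your ``alternative'' direct summation is exactly how the paper derives \eqref{eq:telescoping} with $C=2$; the geometric-series unrolling is an equivalent, slightly longer route to the same constant.
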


\begin{proof}
The endpoint image lies in $A_{j+1}$ and, by
Lemma~\ref{lem:shell-doubling}, has volume at least $(3/2)a_j$.  Its portion
outside $S$ has volume $e_j$; its portion inside $S$ has volume at most
$a_{j+1}$.  This proves \eqref{eq:recurrence}.  Summing over
$j$ gives
\[
\frac32a_0+\frac12\sum_{j=1}^{J-1}a_j
\le\sum_{j=0}^{J-1}e_j+a_J.
\]
Since the left-hand side is at least
$(1/2)\sum_{j=0}^{J-1}a_j$, this proves
\eqref{eq:telescoping}, with the explicit constant $2$.
\end{proof}

If $U_{u_j}$ and $U_{u_k}$ overlap, then
$2^{|j-k|}<10$, and hence $|j-k|<4$.  Partition
$\{0,\ldots,J-1\}$ into its four residue classes modulo $4$.
The bands indexed by any one class are pairwise disjoint, and some class
$I$ satisfies
\[
 \sum_{j\in I}e_j\ge\frac14\sum_{j=0}^{J-1}e_j.
\]
Applying Lemma~\ref{lem:local-charge} gives
\begin{equation}\label{eq:sum-exits}
 \sum_{j=0}^{J-1}e_j
 \le4\sum_{j\in I}e_j
 \le C\frac{nR}{r}
       \sum_{j\in I}\vol(\partial_0^K S\cap U_{u_j})
 \le C\frac{nR}{r}\vol(\partial_0^K S).
\end{equation}

\begin{proposition}[Shell reduction]\label{prop:shell-reduction}
With $H_*$ defined by \eqref{eq:u-star}, every Borel $S\subset K$ satisfies
\begin{equation}\label{eq:shell-dichotomy}
 \vol(S)
 \le C\left[
   \frac{nR}{r}\vol(\partial_0^K S)
   +\vol(S\cap H_*)
 \right].
\end{equation}
\end{proposition}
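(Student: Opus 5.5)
We decompose $S$ according to homothetic depth and feed the dyadic recurrence into the exit-charging bound. Write
\[
 S=(S\cap H_*)\cup(S\cap(K\setminus H_*)).
\]
The set $K\setminus H_*=\{x\in K:\eta(x)<u_*\}$ is covered, up to the null set $\partial K$ (where $\eta=0$), by the shells $A_{2^{-k}u_*}$, $k\ge1$. So it suffices to control $\vol(S\cap\{0<\eta<u_*\})$.

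Fix an integer $J\ge1$ and set $\eps=2^{-J}u_*$ and $u_j=2^j\eps$, as in the dyadic recurrence. The shells $A_0,\ldots,A_{J-1}$ are $\{u_j\le\eta<2u_j\}$, so they are disjoint and their union is $\{\eps\le\eta<u_*\}$. Every $u_j$ with $j\le J-1$ satisfies $u_j\le u_*/2\le 1/(16n)$. Hence Lemma~\ref{lem:shell-doubling}, Lemma~\ref{lem:shell-path} and Lemma~\ref{lem:local-charge} all apply for $j<J$, which is needed both for the recurrence and for \eqref{eq:sum-exits}.

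Combining \eqref{eq:telescoping} with \eqref{eq:sum-exits} gives
\[
 \vol\bigl(S\cap\{\eps\le\eta<u_*\}\bigr)=\sum_{j=0}^{J-1}a_j
 \le C\Bigl(\frac{nR}{r}\vol(\partial_0^K S)+a_J\Bigr).
\]
The last shell is $A_J=A_{u_*}=\{u_*\le\eta<2u_*\}$, which lies inside $H_*$. Therefore $a_J\le\vol(S\cap H_*)$.

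Adding $\vol(S\cap H_*)$ to both sides yields
\[
 \vol\bigl(S\cap\{\eta\ge\eps\}\bigr)\le C\Bigl[\frac{nR}{r}\vol(\partial_0^K S)+\vol(S\cap H_*)\Bigr],
\]
and $C$ does not depend on $J$. Now let $J\to\infty$. The sets $S\cap\{\eta\ge 2^{-J}u_*\}$ increase to $S\cap\{\eta>0\}$, and $\{\eta=0\}=\partial K$ is null. Monotone convergence therefore gives \eqref{eq:shell-dichotomy}.

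The only delicate points are bookkeeping. First, the constant must be uniform in $J$. This holds because \eqref{eq:telescoping} has constant $2$, and \eqref{eq:sum-exits} uses the residue-class disjointness of the bands $U_{u_j}$, which is independent of $J$. Second, one must check that $2u_{J-1}=u_*$, so that the top shell used in the recurrence stays within the range where Lemma~\ref{lem:shell-doubling} applies.
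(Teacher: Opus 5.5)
Your proof is correct and follows essentially the same argument as the paper: combine \eqref{eq:telescoping} with \eqref{eq:sum-exits}, bound $a_J\le\vol(S\cap H_*)$, and let $J\to\infty$ using that $\partial K$ is null. Your explicit checks that $u_{J-1}=u_*/2\le 1/(16n)$ and that the constants are uniform in $J$ fill in details the paper leaves implicit.
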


\begin{proof}
Up to null boundaries,
\[
 H_\eps=\left(\bigcup_{j=0}^{J-1}A_j\right)\cup H_*.
\]
As $J\to\infty$, $\eps\downarrow0$ and
$H_\eps\uparrow\operatorname{int}K$. Furthermore, since the boundary of a convex body has zero Lebesgue measure, $\eps \downarrow 0$ implies $\vol(S\cap H_\eps)\to\vol(S)$. Therefore combining \eqref{eq:telescoping} and \eqref{eq:sum-exits} and noting that 
$a_J\le\vol(S\cap H_*)$, we deduce \eqref{eq:shell-dichotomy}.
\end{proof}

\subsection{Brenier transport from the homothetic core}\label{sec:deep}
In view of Proposition \ref{prop:shell-reduction} we are left with proving a lower bound on $\partial_0^K(S)$ when the majority of $S$ lies in the homothetic core:
\begin{equation}\label{eq:deep-set}
 D=S\cap H_*,\qquad d=\vol(D).
\end{equation}
By Lemma~\ref{lem:depth-clearance},
\begin{equation}\label{eq:delta-core}
 D\subset K_\delta^Q,
 \qquad
 \delta=u_*r=\frac{r}{16n}.
\end{equation}
We provide two coordinate routing schemes for inside the core. The first scheme is used for routing two sets of equal positive volume. The second scheme is for routing two sets of positive volume where the target set is larger than the source set.
\begin{lemma}[Equal-volume Brenier transport]\label{lem:deep-transport}
Let $E \subset H^*,F\subset K_\delta^Q$ be Borel sets of equal positive volume.
Then there are conull Borel sets $E_0\subset E$, $F_0\subset F$, and Borel maps
\[
 \Phi_0,\ldots,\Phi_M:E_0\to K
\]
with the following properties:
\begin{enumerate}[label=(\roman*)]
\item \label{item:core-equal-route}$\Phi_0=\Id$, $\Phi_M:E_0\to F_0$ is a bijection and the image of a point under consecutive maps differ in at most one coordinate.
\item \label{item:core-equal-steps}
\begin{equation}\label{eq:deep-length}
 M\le Cn\left(n+\frac{R}{\delta}\right).
\end{equation}

\item \label{item:core-equal-injectivity} Every $\Phi_j$ is injective on $E_0$.
\item \label{item:deep-bounded-compression} Every $\Phi_j$ satisfies
\begin{equation}\label{eq:deep-bounded-compression}
 \vol(\Phi_j(G))\ge\frac12\vol(G)
 \qquad\text{for every Borel }G\subset E_0.
\end{equation}
\end{enumerate}
\end{lemma}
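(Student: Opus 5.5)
We construct the routing by discretizing the displacement interpolation of the Brenier map. Apply Theorem~\ref{thm:brenier} to $\mu=a^{-1}\ind_E\,dx$ and $\nu=a^{-1}\ind_F\,dx$, where $a=\vol(E)=\vol(F)$. This gives a convex $\varphi$ and $T=\bren$ with $T_\#\mu=\nu$. After deleting null sets, $T$ is a Borel bijection $E_0\to F_0$ with inverse $\nabla\varphi^*$. We shrink $E_0$ further, still keeping it conull, so that every $x\in E_0$ has the following properties: $\varphi$ is Aleksandrov twice differentiable at $x$; $\det\hess(x)=1$ by \eqref{eq:det-one}; and $x$ is a Lebesgue point of the relevant Jacobians, as Lemma~\ref{lem:monotone-image-volume} requires.

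\textbf{The interpolants.} Fix times $0=t_0<t_1<\dots<t_N=1$ with mesh $h$, and set $X_t(x)=(1-t)x+tT(x)$. Note that $X_t=\nabla\psi_t$ with $\psi_t=(1-t)\frac{|x|^2}{2}+t\varphi$. Between $X_{t_k}$ and $X_{t_{k+1}}$ we insert the coordinate sweep
\[
Y_{k,\ell}(x)=P_{[\ell]}X_{t_{k+1}}(x)+P_{[n]\setminus[\ell]}X_{t_k}(x),\qquad \ell=0,\dots,n.
\]
This is again a gradient map: $Y_{k,\ell}=\nabla\psi_{k,\ell}$, where $\psi_{k,\ell}=\frac{|x|^2}{2}-\frac{|x|^2}{2}$ is adjusted by the convex combination with diagonal weight matrix $\Lambda=\mathrm{diag}(t_{k+1}\ind_{i\le\ell}+t_k\ind_{i>\ell})$. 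Formally,
\[
Y_{k,\ell}(x)=(I-\Lambda)x+\Lambda T(x),
\]
so $DY_{k,\ell}=(I-\Lambda)+\Lambda\hess$. This map is not a gradient in general, since $\Lambda\hess$ is not symmetric. We handle it by conjugation: writing $Y=\Lambda^{1/2}\bigl(\Lambda^{-1/2}(I-\Lambda)\Lambda^{1/2}+\Lambda^{1/2}\hess\Lambda^{1/2}\bigr)\Lambda^{-1/2}$ shows that $Y\circ\Lambda^{1/2}$ composed with $\Lambda^{-1/2}$ is the gradient of the convex function $y\mapsto\frac12\langle(I-\Lambda)\Lambda^{-1}y,y\rangle+\varphi(\Lambda^{-1/2}\cdot)$, suitably rescaled. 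We may avoid $\Lambda=0$ by starting at $t_1$ after a single initial sweep or by using $t_0=0$ only trivially.

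This yields the consecutive-coordinate property~\ref{item:core-equal-route}. It also yields injectivity~\ref{item:core-equal-injectivity}: a map of the form ``positive diagonal times the gradient of a strictly convex function'' is injective, and strict convexity comes from the $(I-\Lambda)$ term whenever $\Lambda\ne I$; at $\Lambda=I$ we use injectivity of $T$ on $E_0$. The total number of steps is $M=nN$.

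\textbf{Containment in $K$.} Both $x$ and $T(x)$ lie in $K_\delta^Q$, and $K_\delta^Q$ is convex, so $X_t(x)\in K_\delta^Q$. We have $\normQ{X_{t_{k+1}}-X_{t_k}}=h\,\normQ{T(x)-x}\le 2Rh$ by \eqref{eq:diam-bound}. By Lemma~\ref{lem:unconditional-contractions}, $Y_{k,\ell}-X_{t_k}$ is a coordinate projection of this vector, so it lies in $2Rh\,Q$. Choosing $h=\delta/(2R)$ therefore keeps every staircase point in $X_{t_k}+\delta Q\subset K$, with $N\approx R/\delta$.

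\textbf{Bounded compression.} This is the main obstacle. For $H=\hess(x)$ with $\det H=1$, we need
\[
\det\bigl((I-\Lambda)+\Lambda H\bigr)\ge\tfrac12 .
\]
For scalar $\Lambda=tI$ this follows from Minkowski's determinant inequality (concavity of $\det^{1/n}$ on PSD matrices): $\det^{1/n}((1-t)I+tH)\ge (1-t)+t\det^{1/n}H=1$. For a general diagonal $\Lambda$ with entries in $[t_k,t_{k+1}]$, we compare with $t_kI$. Writing $\Lambda=t_kI+\Delta$ with $0\le\Delta\le hI$,
\[
(I-\Lambda)+\Lambda H=\bigl((1-t_k)I+t_kH\bigr)+\Delta(H-I).
\]
We then need a perturbation bound. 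After symmetrizing, the relative perturbation $B^{-1/2}\Delta(H-I)B^{-1/2}$, where $B=(1-t_k)I+t_kH$, has operator norm at most about $h/t_k$ on the $H$-part and $h$ on the $I$-part, since $H\preceq B/t_k$. Consequently the determinant ratio is at least $(1-Ch/t_k)^n$, which is fine once $h\le c\,t_k/n$.

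Near $t=0$ a uniform mesh fails. So the first portion uses a geometric mesh: from $t\sim 1/n$ downward, the steps are multiplicative, $t_{k+1}=t_k(1+c/n)$. This costs $O(n\log(\cdot))$ sweeps, and we control it as follows. For $t\le c/n$, we use instead $B\succeq(1-t)I$, so the $I$-part perturbation is at most $h$ and the $H$-part is bounded by $\Delta H\preceq (h/t_k)$ only relative to $t_kH$. A cleaner bound is $\det((I-\Lambda)+\Lambda H)\ge\prod_i(1-\lambda_i)\cdot\det(I+\ldots)\ge(1-\max\lambda)^n\ge 1/2$ when all $\lambda_i\le c/n$, using PSD-ness of $(I-\Lambda)^{-1/2}\Lambda^{1/2}H\Lambda^{1/2}(I-\Lambda)^{-1/2}$.

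So the schedule is: one sweep straight to $t=c/n$; then geometric steps with ratio $1+c/n$ up to $t\sim h_0$, for $O(n\log n)$ sweeps; then uniform steps of size $\min\{c/n,\delta/(2R)\}$. The total is $O(n+R/\delta)$ sweeps, hence $M\le Cn(n+R/\delta)$, matching \eqref{eq:deep-length}. The geometric phase must also respect the $Q$-containment constraint, which it does because its steps $ct/n$ are at most $\delta/(2R)$ once we cap them.

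\textbf{Passing to volumes.} Finally, Lemma~\ref{lem:monotone-image-volume}, applied to the convex potential underlying each conjugated $Y_{k,\ell}$, converts the pointwise bound $\det DY_{k,\ell}\ge\frac12$ into $\vol(\Phi_j(G))\ge\frac12\vol(G)$. The linear conjugation by $\Lambda^{\pm1/2}$ changes volumes by reciprocal factors that cancel.
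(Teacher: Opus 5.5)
Your maps have exactly the paper's form: with $\Lambda=t_kI+hP_{[\ell]}$, $Y_{k,\ell}$ is the paper's $\Phi_{\ell,k}=(1-t_\ell)x+t_\ell\bren(x)+\delta P_k(\bren(x)-x)$. Your containment argument also matches the paper. So does the ``positive diagonal times a convex gradient'' device, which the paper writes cleanly as $B^{-1}\Phi=\nabla\psi$ with $\psi=\frac12x^\top B^{-1}Ax+\varphi$.

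The gap is in the bounded-compression step. First, your perturbation estimate is wrong as stated. The matrix $B^{-1/2}\Delta(H-I)B^{-1/2}$ is not symmetric, and its operator norm is not $O(h/t_k)$: already for $n=2$, with the eigenvectors of $H$ at $45^\circ$ to the axes, it grows like $h\sqrt{\lambda_{\max}(H)}$. Only its spectrum is controlled, namely that of $\Delta^{1/2}(H-I)B^{-1}\Delta^{1/2}$, which lies in $[-h/(1-t_k),\,h/t_k]$. In particular your ``$I$-part'' has size $h/(1-t_k)$, not $h$, since $B\succeq(1-t_k)I$ is all you know when $H$ has tiny eigenvalues.

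As a result, comparing with $X_{t_k}$ fails for the last sweeps, where $1-t_k\lesssim nh$, and your claimed ratio bound is false there. Take $n=2$, $t_k=1-h$, $\ell=1$ and $H=\operatorname{diag}(\epsilon,1/\epsilon)$. Then $\det DY_{k,1}=1-h+h\epsilon$ while $\det B\approx h(1-h)/\epsilon$, so the ratio tends to $0$. Your schedule runs uniform steps all the way to $t=1$ and never handles this end.

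Second, at the $t=0$ end your two-sided bound forces the geometric phase, which you yourself count as $O(n\log n)$ sweeps. That is not $O(n+R/\delta)$ when $R/r\lesssim\log n$ (recall $R/\delta=16nR/r$). So \eqref{eq:deep-length} fails by a $\log n$ factor in that regime, and the loss would propagate to Theorem~\ref{thm:main}.

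Third, your single first sweep from $0$ to $c/n$ moves points by up to $2cR/n$ in $Q$-norm. This violates your own constraint $h\le\delta/(2R)$ as soon as $R\gtrsim r$. That part is easy to repair with substeps, because $\det((I-\Lambda)+\Lambda H)\ge\det(I-\Lambda)$ only needs $\Lambda\preceq(c/n)I$.

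The missing idea is that only the lower side of the spectrum matters, and at the appropriate end it is bounded independently of $t_k$.

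For $t_k\le1/2$, write $DY_{k,\ell}=B+hP_{[\ell]}(H-I)$ with $B=(1-t_k)I+t_kH$; by AM--GM and $\det H=1$, $\det B\ge1$. Then $DY_{k,\ell}B^{-1}=I+hP_{[\ell]}(H-I)B^{-1}$ is block triangular, so its determinant is $\det\bigl(I_\ell+h[(H-I)B^{-1}]_{[\ell],[\ell]}\bigr)$. Since $(H-I)B^{-1}$ is symmetric with spectrum in $[-1/(1-t_k),1/t_k]$, Cauchy interlacing gives a determinant of at least $(1-2h)^n$. The large eigenvalues, of size up to $h/t_k$, only increase the determinant.

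For $t_k\ge1/2$, compare instead with $DX_{t_{k+1}}$ by writing $DY_{k,\ell}=DX_{t_{k+1}}-h(I-P_{[\ell]})(H-I)$. This gives a determinant of at least $(1-h/t_{k+1})^n\ge(1-2h)^n$.

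Hence a uniform mesh $h\asymp\min\{1/n,\delta/R\}$ works throughout, with no geometric phase, and yields $M\le Cn(n+R/\delta)$.

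Two smaller points:
\begin{itemize}
\item The sweep out of $t=0$ has singular $\Lambda$, so the volume bound there needs a fiberwise (Fubini) application of Lemma~\ref{lem:monotone-image-volume} to $w\mapsto(1-h)w+h\nabla_w\varphi(w,z)$, as in the paper. You do not supply this.
\item Strict convexity fails when some $\Lambda_{ii}=1$. Injectivity still follows from monotonicity: $\nabla\psi(x)=\nabla\psi(y)$ forces $\bren(x)=\bren(y)$, hence $x=y$.
\end{itemize}
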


The proof is deferred to the appendix (see \ref{sec:deep-transport proof}).

\begin{figure}[H]
\centering
\begin{tikzpicture}[
  scale=1.35,
  every node/.style={font=\small},
  ray/.style={-{Latex[length=2.1mm]},teal!72!black,line width=0.85pt},
  stair/.style={orange!88!black,densely dashed,line width=0.75pt,
                rounded corners=0.8pt},
  point/.style={circle,inner sep=1.35pt},
  interpoint/.style={circle,inner sep=0.90pt},
  exitpoint/.style={rectangle,inner sep=1.05pt,fill=blue!78!black,
                   draw=blue!92!black,line width=0.35pt}
]
  % A deliberately non-product convex polygon.
  \path[fill=gray!8,draw=black!72,line width=1pt]
    (-3.15,-1.20) -- (-2.35,1.72) -- (-0.35,2.28)
    -- (2.25,1.72) -- (3.18,0.15) -- (2.18,-1.82)
    -- (-0.75,-2.28) -- cycle;

  % Homothetic core (1-c_0/n)K.
  \begin{scope}[scale=0.78]
    \path[fill=blue!20,draw=green!48!black,dashed,line width=0.9pt]
      (-3.15,-1.20) -- (-2.35,1.72) -- (-0.35,2.28)
      -- (2.25,1.72) -- (3.18,0.15) -- (2.18,-1.82)
      -- (-0.75,-2.28) -- cycle;
  \end{scope}

  % Three components of the source set E, spread through the homothetic core.
  \path[fill=red!22,draw=red!68!black,line width=0.8pt]
    (-1.82,0.54)
    .. controls (-1.82,0.92) and (-1.58,1.10) .. (-1.28,1.03)
    .. controls (-1.05,0.91) and (-1.02,0.57) .. (-1.20,0.43)
    .. controls (-1.42,0.35) and (-1.72,0.39) .. (-1.82,0.54)
    -- cycle;
  \path[fill=red!22,draw=red!68!black,line width=0.8pt]
    (-0.68,0.78)
    .. controls (-0.65,1.24) and (-0.48,1.52) .. (-0.20,1.48)
    .. controls ( 0.03,1.39) and ( 0.08,0.96) .. (-0.05,0.77)
    .. controls (-0.27,0.65) and (-0.57,0.64) .. (-0.68,0.78)
    -- cycle;
  \path[fill=red!22,draw=red!68!black,line width=0.8pt]
    (-1.50,-1.10)
    .. controls (-1.45,-0.75) and (-1.24,-0.59) .. (-0.98,-0.64)
    .. controls (-0.74,-0.68) and (-0.67,-0.99) .. (-0.76,-1.17)
    .. controls (-0.96,-1.31) and (-1.36,-1.29) .. (-1.50,-1.10)
    -- cycle;

  % Six schematic transport samples with vertically staggered targets.
  \coordinate (x1) at (-1.55, 0.90);
  \coordinate (x2) at (-1.28, 0.57);
  \coordinate (x3) at (-0.45, 1.35);
  \coordinate (x4) at (-0.25, 0.90);
  \coordinate (x5) at (-1.15,-0.70);
  \coordinate (x6) at (-0.85,-1.10);
  \coordinate (y1) at ( 0.55,-0.400);
  \coordinate (y2) at ( 0.82, 0.050);
  \coordinate (y3) at ( 1.05, 0.950);
  \coordinate (y4) at ( 1.35, 0.500);
  \coordinate (y5) at ( 0.95,-0.550);
  \coordinate (y6) at ( 1.25,-0.800);

  % Three displacement-interpolation states on each ray.
  \coordinate (m1a) at (-1.025, 0.575);
  \coordinate (m1b) at (-0.500, 0.250);
  \coordinate (m1c) at ( 0.025,-0.075);
  \coordinate (m2a) at (-0.755, 0.440);
  \coordinate (m2b) at (-0.230, 0.310);
  \coordinate (m2c) at ( 0.295, 0.180);
  \coordinate (m3a) at (-0.075, 1.250);
  \coordinate (m3b) at ( 0.300, 1.150);
  \coordinate (m3c) at ( 0.675, 1.050);
  \coordinate (m4a) at ( 0.150, 0.800);
  \coordinate (m4b) at ( 0.550, 0.700);
  \coordinate (m4c) at ( 0.950, 0.600);
  \coordinate (m5a) at (-0.625,-0.663);
  \coordinate (m5b) at (-0.100,-0.625);
  \coordinate (m5c) at ( 0.425,-0.588);
  \coordinate (m6a) at (-0.325,-1.025);
  \coordinate (m6b) at ( 0.200,-0.950);
  \coordinate (m6c) at ( 0.725,-0.875);

  \draw[ray] (x1) -- (y1);
  \draw[ray] (x2) -- (y2);
  \draw[ray] (x3) -- (y3);
  \draw[ray] (x4) -- (y4);
  \draw[ray] (x5) -- (y5);
  \draw[ray] (x6) -- (y6);

  % One two-move coordinate sweep joins every pair of marked interpolation states.
  \draw[stair]
    (x1) -- (m1a |- x1) -- (m1a)
         -- (m1b |- m1a) -- (m1b)
         -- (m1c |- m1b) -- (m1c)
         -- (y1 |- m1c) -- (y1);
  \draw[stair]
    (x2) -- (m2a |- x2) -- (m2a)
         -- (m2b |- m2a) -- (m2b)
         -- (m2c |- m2b) -- (m2c)
         -- (y2 |- m2c) -- (y2);
  \draw[stair]
    (x3) -- (m3a |- x3) -- (m3a)
         -- (m3b |- m3a) -- (m3b)
         -- (m3c |- m3b) -- (m3c)
         -- (y3 |- m3c) -- (y3);
  \draw[stair]
    (x4) -- (m4a |- x4) -- (m4a)
         -- (m4b |- m4a) -- (m4b)
         -- (m4c |- m4b) -- (m4c)
         -- (y4 |- m4c) -- (y4);
  \draw[stair]
    (x5) -- (m5a |- x5) -- (m5a)
         -- (m5b |- m5a) -- (m5b)
         -- (m5c |- m5b) -- (m5c)
         -- (y5 |- m5c) -- (y5);
  \draw[stair]
    (x6) -- (m6a |- x6) -- (m6a)
         -- (m6b |- m6a) -- (m6b)
         -- (m6c |- m6b) -- (m6c)
         -- (y6 |- m6c) -- (y6);

  \foreach \p in {x1,x2,x3,x4,x5,x6}
    \node[point,fill=red!78!black] at (\p) {};
  \foreach \p in {y1,y2,y3,y4,y5,y6}
    \node[point,fill=blue!78!black] at (\p) {};

  % Mark three intermediate states along every displacement ray.
  \foreach \p in {m1a,m1b,m1c,m2a,m2b,m2c,m3a,m3b,m3c,
                  m4a,m4b,m4c,m5a,m5b,m5c,m6a,m6b,m6c}
    \node[interpoint,fill=yellow!75!orange,draw=orange!75!black,thin]
      at (\p) {};

  % The first staircase corner outside each source component.
  \node[exitpoint] at (m1a |- x1) {};
  \node[exitpoint] at (m2a |- x2) {};
  \node[exitpoint] at (m3b |- m3a) {};
  \node[exitpoint] at (m4a |- x4) {};
  \node[exitpoint] at (m5a |- x5) {};
  \node[exitpoint] at (m6a |- x6) {};

  % Legend, kept entirely to the right of the geometric diagram.
  \begin{scope}[shift={(3.42,2.15)}]
    \path[draw=black!32,fill=white,rounded corners=1.5pt,line width=0.45pt]
      (0,0) rectangle (4.00,-4.78);
    \node[anchor=west,font=\scriptsize\bfseries] at (0.18,-0.24) {Legend};

    \path[fill=gray!12,draw=black!72,line width=0.55pt]
      (0.18,-0.52) rectangle (0.43,-0.68);
    \node[anchor=west,font=\scriptsize] at (0.55,-0.60)
      {\textcolor{black!75}{$K$}: convex body};

    \path[fill=blue!20,draw=none]
      (0.18,-0.94) rectangle (0.305,-1.10);
    \path[fill=red!22,draw=none]
      (0.305,-0.94) rectangle (0.43,-1.10);
    \path[fill=none,draw=green!48!black,dashed,line width=0.75pt]
      (0.18,-0.94) rectangle (0.43,-1.10);
    \node[anchor=west,font=\scriptsize] at (0.55,-1.02)
      {\textcolor{green!48!black}{$H_*\subset K_\delta^Q$}: homothetic core};

    \path[fill=red!22,draw=red!68!black,line width=0.55pt]
      (0.18,-1.37) rectangle (0.43,-1.53);
    \node[anchor=west,font=\scriptsize] at (0.55,-1.45)
      {\textcolor{red!68!black}{$E\subset S\cap H_*$}: source set};

    \node[point,fill=red!78!black] at (0.305,-1.87) {};
    \node[anchor=west,font=\scriptsize] at (0.55,-1.87)
      {\textcolor{red!78!black}{$x\in E$}: source point};

    \node[interpoint,fill=yellow!75!orange,draw=orange!75!black,thin]
      at (0.305,-2.29) {};
    \node[anchor=west,font=\scriptsize] at (0.55,-2.29)
      {interpolant point};

    \path[fill=blue!20,draw=blue!68!black,line width=0.55pt]
      (0.18,-2.63) rectangle (0.43,-2.79);
    \node[anchor=west,font=\scriptsize] at (0.55,-2.71)
      {\textcolor{blue!78!black}{$F\subset S^c\cap K_\delta^Q$}: target set};

    \node[point,fill=blue!78!black] at (0.305,-3.13) {};
    \node[anchor=west,font=\scriptsize] at (0.55,-3.13)
      {\textcolor{blue!78!black}{$T(x)\in F$}: target point};

    \node[exitpoint] at (0.305,-3.55) {};
    \node[anchor=west,font=\scriptsize] at (0.55,-3.55)
      {\textcolor{blue!78!black}{$\partial_0^K S$}: first exit};

    \draw[ray] (0.18,-3.97)--(0.43,-3.97);
    \node[anchor=west,font=\scriptsize] at (0.55,-3.97)
      {solid arrow: transport ray};

    \draw[stair] (0.18,-4.47)--(0.33,-4.47)--(0.33,-4.32)--(0.45,-4.32);
    \node[anchor=west,font=\scriptsize,align=left] at (0.55,-4.40)
      {dashed staircase:\\[-1mm]coordinate moves};
  \end{scope}
\end{tikzpicture}

\caption{Brenier transport from the homothetic core into the $Q$-inner
parallel body.  A source set $E\subset S\cap H_*$, shown in three
components, is transported to $F\subset S^c\cap K_\delta^Q$.  Each source
point is sent to a target point along a transport ray, and the dashed
coordinate staircase realizes that ray by coordinate moves.  The blue squares represent
the first exits in $\partial_0^K S$ charged by the corresponding source labels. A coordinate staircase itself need not be entirely contained in $K_\delta^Q$ but is always contained in $K$.}
\label{fig:brenier-core-transport}
\end{figure}

\begin{lemma}[Unequal-volume Brenier transport]
\label{lem:early-expansion}
Let $E \subset H^*,F\subset K_\delta^Q$ be Borel sets of
volumes $a$ and $f$, where $0<a\le f$, and put $m=f/a$.  For every
$0<\theta\le1/2$, there are conull Borel sets $E_0\subset E$,
$F_0\subset F$, a Borel Brenier bijection $T:E_0\to F_0$, and Borel maps
\[
 \Phi_0,\ldots,\Phi_M:E_0\to K
\]
such that:
\begin{enumerate}[label=(\roman*)]
\item \label{item:core-unequal-route}
$\Phi_0=\Id$ and the image of a point under consecutive maps differ in at one most coordinate.    
\item\label{item:core-unequal-steps}
\begin{equation}\label{eq:variable-time-length}
 M\le Cn\left(1+\frac{nR}{r}\theta\right).
 \end{equation}
\item \label{item:core-unequal-injectivity} Every $\Phi_j$ is injective on $E_0$.
\item \label{item:early-bounded-compression} Every $\Phi_j$ satisfies
\begin{equation}\label{eq:early-bounded-compression}
 \vol(\Phi_j(G))\ge\frac12\vol(G)
 \qquad\text{for every Borel }G\subset E_0.
\end{equation}
\item \label{item:core-unequal-expansion}
\begin{equation}\label{eq:early-expansion}
 \vol(\Phi_M(G))\ge m^\theta\vol(G)
 \qquad\text{for every Borel }G\subset E_0.
\end{equation}
\end{enumerate}
\end{lemma}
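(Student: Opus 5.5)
We take $\mu=a^{-1}\mathbf 1_E\,dx$ and $\nu=f^{-1}\mathbf 1_F\,dx$. Theorem~\ref{thm:brenier} gives a convex $\varphi$ with $T=\bren$ pushing $\mu$ to $\nu$. After deleting null sets, $T:E_0\to F_0$ is a Borel bijection with inverse $\nabla\varphi^*$. We shrink $E_0$ further so that every point is an Aleksandrov point with $\det\hess=m$, by \eqref{eq:Hessian-facts}, and is a Lebesgue point as in Lemma~\ref{lem:monotone-image-volume}.

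The plan is to reuse the machinery of Lemma~\ref{lem:deep-transport}, but to run the displacement interpolation $X_t(x)=(1-t)x+tT(x)$ only up to time $t=\theta$. The map $X_t=\nabla\psi_t$ with $\psi_t=(1-t)\tfrac12|x|^2+t\varphi$ is the gradient of a convex function. It is injective for $t<1$ because strict monotonicity gives $\ip{X_t(x)-X_t(y)}{x-y}\ge(1-t)|x-y|^2$. Its Hessian is $(1-t)I+tH$, so Minkowski's determinant inequality (concavity of $\det^{1/n}$) gives
\[
\det\bigl((1-t)I+tH\bigr)\ge\bigl((1-t)+t\,m^{1/n}\bigr)^n\ge m^{t}.
\]
The last step is AM--GM, i.e.\ $(1-t)\cdot1+t\,m^{1/n}\ge m^{t/n}$. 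Lemma~\ref{lem:monotone-image-volume} with injectivity then yields $\vol(X_t(G))\ge m^t\vol(G)\ge\vol(G)$ for every $t\in[0,\theta]$. Taking $\Phi_M=X_\theta$ gives \ref{item:core-unequal-expansion}.

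For the coordinate realization we choose times $0=t_0<t_1<\dots<t_k=\theta$ with mesh $h$. Between consecutive interpolants $X_{t_i}(x)$ and $X_{t_{i+1}}(x)$ we insert the coordinate sweep: the intermediate point at substep $\ell$ replaces the first $\ell$ coordinates of $X_{t_i}(x)$ by those of $X_{t_{i+1}}(x)$. We use the same coordinate order for all labels, so each $\Phi_j$ is a Borel map of the form
\[
\Phi_j(x)=P_{J}X_{t_{i+1}}(x)+P_{J^c}X_{t_i}(x),
\]
and \ref{item:core-unequal-route} is immediate.

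Containment in $K$ follows because $E\subset H_*\subset K^Q_\delta$ and $F\subset K_\delta^Q$, so each $X_t(x)\in K_\delta^Q$ by convexity. The displacement over one step is $h(T(x)-x)$, which has $Q$-norm at most $2Rh$. By Lemma~\ref{lem:unconditional-contractions}, coordinate projections are $Q$-contractions, so every sweep point lies in $X_{t_i}(x)+2RhQ\subset K$ once $h\le\delta/(2R)$. The number of sweeps is then $k\approx\theta/h\lesssim \theta R/\delta+1=C(1+nR\theta/r)$, and $M\le nk$ gives \ref{item:core-unequal-steps}. The mesh will have to be refined for the compression step below, but only by a factor absorbed into $C$, or by an additive $O(n)$ term as in \eqref{eq:deep-length}.

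The main obstacle is \ref{item:core-unequal-injectivity}--\ref{item:early-bounded-compression} for the intermediate sweep maps. Write $\Phi_j=\nabla\psi_{t_i}+P_J\bigl(\nabla\psi_{t_{i+1}}-\nabla\psi_{t_i}\bigr)$, whose derivative is
\[
A_0+hP_J(H-I),\qquad A_0=(1-t_i)I+t_iH .
\]
This is not symmetric, so monotone-gradient arguments do not apply directly. The first thing to try is to show that it is a small perturbation of $A_0$, namely $\|A_0^{-1/2}hP_J(H-I)A_0^{-1/2}\|\le 1/4$, which would give $\det\ge\tfrac12\det A_0\ge\tfrac12$ and injectivity via monotonicity of the symmetrized map. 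This requires bounding $H$ relative to $A_0$, which fails pointwise when $H$ has huge eigenvalues. The remedy is to make $h$ depend on $t$ geometrically near $t=0$, on the first $O(n)$ steps, exploiting $\|A_0^{-1}H\|\le 1/t_i$. The corresponding additional factor $n$ is exactly what the $Cn$ prefactor in \ref{item:core-unequal-steps} allows. Since the paper defers the equal-volume version of this estimate to the appendix, I would mirror that proof verbatim, with $\det H=m\ge1$ in place of $1$; this only helps the determinant lower bounds. The remaining task is to check injectivity of each sweep map on $E_0$ from the strict monotonicity of $X_{t_i}$ combined with the triangular (partial) structure of the update.
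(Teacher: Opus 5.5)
Your architecture is the paper's. Your sweep maps $X_{t_i}+hP_J(T-\Id)$ are exactly the paper's $\Phi_{\ell,k}$ on a uniform mesh stopped at time $\theta$, and your containment argument, step count, and bound $\det((1-t)I+tH)\ge m^{t}$ for (v) all match. The gap is in (iii)--(iv), which is the real content of the lemma, and the mechanism you sketch there fails.

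No refinement of the mesh near $t=0$ can make $\|A_0^{-1/2}hP_J(H-I)A_0^{-1/2}\|$ small. On the first sweep $A_0=I$, so your bound $\|A_0^{-1}H\|\le1/t_i$ is vacuous. Moreover $h\|P_J(H-I)\|$ is unbounded on $E_0$ for every $h>0$, since a Brenier map between arbitrary Borel sets has no Hessian bound. Also, the prefactor $n$ in (ii) is already used up by the sweep length, so there is no spare factor of $n$ to pay for extra sweeps when $\theta\ll r/R$.

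The missing observation is that a determinant lower bound needs only eigenvalue lower bounds, and these are uniform in $H$. Factor $D\Phi_j=\bigl(I+hP_J(H-I)A_0^{-1}\bigr)A_0$. Since $A_0=(1-t_i)I+t_iH$ commutes with $H$, the matrix $I+h(H-I)A_0^{-1}$ is symmetric. Its eigenvalues satisfy $1+h(\lambda-1)/(1-t_i+t_i\lambda)\ge1-h/(1-t_i)\ge1-2h$ for all $\lambda\ge0$; huge $\lambda$ only push them up toward $1+h/t_i$. The factor $I+hP_J(H-I)A_0^{-1}$ is block upper triangular, with the identity on $J^c$ and the principal $J\times J$ block of that symmetric matrix on $J$. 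Cauchy interlacing then gives $\det D\Phi_j\ge m^{t_i}(1-2h)^n\ge\frac12$, already for the uniform mesh $h\le\delta/(2R)$ you chose, since then $2nh\le 1/16$. All your times are at most $\theta\le\frac12$, so this one case suffices.

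Two further steps are also left open.

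First, $\Phi_j$ is not Lipschitz, so the Jacobian bound must go through Lemma~\ref{lem:monotone-image-volume}, which needs the gradient of a convex function. You note the non-symmetry but do not resolve it. Write $\Phi_j(x)=Ax+B\bren(x)$ with $A=(1-t_i)I-hP_J$ and $B=t_iI+hP_J$, both nonnegative diagonal.
For $t_i>0$, $B^{-1}\Phi_j=\nabla\psi$ with $\psi(x)=\frac12x^{\mathsf T}B^{-1}Ax+\varphi(x)$ convex. Hence $\vol(\Phi_j(G))=\det B\,\vol(\nabla\psi(G))\ge\int_G\det(A+BH)\,dx$.
For $t_i=0$, the map fixes the $J^c$-coordinates. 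On each slice it is the gradient of the convex function $w\mapsto\frac{1-h}2|w|^2+h\varphi(w,z)$, so you apply the lemma fiberwise and use Fubini.

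Second, injectivity does not follow from strict monotonicity of $X_{t_i}$ plus a perturbation, because $hP_J(T(x)-T(y))$ is not controlled by $|x-y|$. It follows instead from the same diagonal structure. If $\Phi_j(x)=\Phi_j(y)$, then $A_{ii}(x-y)_i=-B_{ii}(T(x)-T(y))_i$ with $A_{ii}\ge1-\theta>0$. So every product $(x-y)_i(T(x)-T(y))_i$ is $\le0$. Monotonicity of $T$ forces them all to vanish, and then $A_{ii}>0$ gives $x=y$.

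With these pieces your outline is exactly the paper's proof: the argument of Lemma~\ref{lem:deep-transport} truncated at $\ell=\lceil\theta L\rceil$, with $\det\hess=m$. As written, however, the proposal defers precisely this content, and the route it does offer would not work.
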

\begin{proof}
  The overall proof is essentially the same as the proof of Lemma \ref{lem:deep-transport} (see \ref{sec:deep-transport proof} for details) so we only mention the differences. The maps used here are the same $\Phi_{\ell,k}$ as in the proof of Lemma \ref{lem:deep-transport} except we only take $\ell$ up to $\ceil{\theta L}$. The proofs of \ref{item:core-unequal-route}\ref{item:core-unequal-injectivity} and \ref{item:early-bounded-compression} from Lemma \ref{lem:early-expansion} immediately follow from the proofs of \ref{item:core-equal-route}\ref{item:core-equal-injectivity} and \ref{item:deep-bounded-compression} from Lemma \ref{lem:deep-transport}. For \ref{item:core-unequal-expansion} from Lemma \ref{lem:early-expansion} there is only 1 minor difference. Because the Hessian of $\varphi$ from Theorem \ref{thm:brenier} satisfies \eqref{eq:Hessian-facts} the determinant lower bound proof for \ref{item:core-equal-injectivity} actually implies 
  \[
  \det D\Phi_{\ell+1,0} \ge m^{t_\ell},\qquad \det D\Phi_{\ell,k} \ge \frac{1}{2}m^{t_\ell},\qquad \text{ for all } 0 \leq k \leq n, ~0 \leq \ell \leq \ceil{\theta L}.
  \]
  Since $m \ge 1$ and $t_M \ge \theta$, the proofs of \ref{item:core-unequal-expansion} and \ref{item:core-unequal-steps} from Lemma \ref{lem:early-expansion} immediately follow from the proofs of \ref{item:deep-bounded-compression} and \ref{item:core-equal-steps}.
\end{proof}
We first record a set-size-independent boundary estimate and then use Lemma~\ref{lem:early-expansion} to
obtain the profile.  The
boundary-layer estimate \eqref{eq:boundary-layer} with
$\delta=r/(16n)$ gives
\begin{equation}\label{eq:core-complement}
 \vol(K\setminus K_\delta^Q)\le\frac1{16}.
\end{equation}
Since $\vol(S)\le1/2$,
\begin{equation}\label{eq:enough-complement}
 \vol(K_\delta^Q\setminus S)\ge1-\frac1{16}-\frac12=\frac7{16}.
\end{equation}

\begin{proposition}\label{prop:linear-fallback}
Every Borel $S\subset K$ with $0<\vol(S)\le1/2$ satisfies
\begin{equation}\label{eq:linear-fallback}
 \frac{\vol(\partial_0^K S)}{\vol(S)}\ge \frac{cr}{n^2R}.
\end{equation}
\end{proposition}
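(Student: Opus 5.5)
We will split according to Proposition~\ref{prop:shell-reduction}. Write $s=\vol(S)\le 1/2$ and recall $D=S\cap H_*$, $d=\vol(D)$. By \eqref{eq:shell-dichotomy} there is an absolute constant $C_1$ such that
\[
 s\le C_1\Bigl[\frac{nR}{r}\vol(\partial_0^K S)+d\Bigr].
\]
If $\frac{nR}{r}\vol(\partial_0^K S)\ge s/(2C_1)$, then $\vol(\partial_0^K S)/s\ge cr/(nR)$. This already exceeds the claimed $cr/(n^2R)$, and we are done. Otherwise $d\ge s/(2C_1)$, so a constant fraction of $S$ lies in the homothetic core $H_*\subset K_\delta^Q$ with $\delta=r/(16n)$ by \eqref{eq:delta-core}. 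The remaining case is handled by a Brenier routing from $D$ into $S^c$.

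In the second case we use the equal-volume scheme, Lemma~\ref{lem:deep-transport}. Take $E=D$, which is contained in $H_*$. For the target, \eqref{eq:enough-complement} gives $\vol(K_\delta^Q\setminus S)\ge 7/16\ge 1/2\ge d$, so the Lebesgue measure restricted to $K_\delta^Q\setminus S$ is atomless and we can choose a Borel $F\subset K_\delta^Q\setminus S$ with $\vol(F)=d$. Lemma~\ref{lem:deep-transport} then gives a conull $E_0\subset E$ and maps $\Phi_0,\dots,\Phi_M:E_0\to K$ with $\Phi_0=\Id$ and consecutive images differing in at most one coordinate. Moreover $\Phi_M(E_0)=F_0\subset K\setminus S$, and each $\Phi_j$ has compression at most $2$, i.e. $\gamma=1/2$.

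These are exactly hypotheses (i)--(iii) of Lemma~\ref{lem:first-exit}, applied with $A=E_0\subset S$. That lemma yields
\[
 \vol(\partial_0^K S)\ge\frac{d}{2M}.
\]
By \eqref{eq:deep-length} with $\delta=r/(16n)$, we have $M\le Cn(n+16nR/r)\le Cn^2R/r$, using $R\ge r$ (which holds since $rQ\subset K\subset y_0+RQ$ and volumes compare). Combining with $d\ge s/(2C_1)$ gives $\vol(\partial_0^K S)\ge c\,r\,s/(n^2R)$, which is \eqref{eq:linear-fallback}.

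There is no real obstacle beyond bookkeeping, since the heavy lifting is in Lemma~\ref{lem:deep-transport} and Proposition~\ref{prop:shell-reduction}. Two points need care. First, the target $F$ must lie in $K_\delta^Q\setminus S$ and have volume exactly $d$; the margin $7/16$ in \eqref{eq:enough-complement} guarantees this. Second, the relation $R\ge r$ must be checked so that the $n^2$ term in $M$ is absorbed into $n^2R/r$. If one prefers to avoid it, $M\le Cn^2(1+R/r)$ together with $R\gtrsim r$ from the volume comparison $r^n\vol(Q)\le R^n\vol(Q)$ suffices.
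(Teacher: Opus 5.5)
Your proof follows the paper's route. You apply the shell reduction \eqref{eq:shell-dichotomy}, then route from the core piece $D=S\cap H_*$ into $K_\delta^Q\setminus S$ with the equal-volume Brenier scheme of Lemma~\ref{lem:deep-transport}, then invoke Lemma~\ref{lem:first-exit} with $\gamma=1/2$ and $M\le Cn^2R/r$. The paper plugs the resulting bound $d\le Cn^2R\,\vol(\partial_0^KS)/r$ into the dichotomy instead of splitting into cases; that difference is cosmetic.

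\textbf{The one false step.} The claim $7/16\ge 1/2$ is wrong. Since $d$ can be as large as $1/2$ while \eqref{eq:enough-complement} only guarantees $\vol(K_\delta^Q\setminus S)\ge 7/16$, a Borel $F\subset K_\delta^Q\setminus S$ with $\vol(F)=d$ need not exist. Nothing in your second case rules this out. For instance, if $K=rQ$ then $K_\delta^Q=H_*$, and any $S\subset H_*$ with $\vol(S)=1/2$ gives $\vol(K_\delta^Q\setminus S)=(1-\tfrac1{16n})^n-\tfrac12<\tfrac12=d$.

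\textbf{The fix.} Do what the paper does: route only a Borel piece $E\subset D$ of volume $\min\{d,7/16\}\ge 7d/8$ (the paper takes $\vol(E)=\vol(F)\ge d/2$) to a target $F$ of the same volume. This yields $\vol(\partial_0^KS)\ge 7d/(16M)$ and changes only the constant. The rest of your bookkeeping is correct, including the case-split arithmetic, $R\ge r$ from the volume comparison, and the bound on $M$.
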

\begin{proof}
Recall the parameters in \eqref{eq:deep-set}. If $d>0$ then \eqref{eq:enough-complement} and \eqref{eq:boundary-layer} imply that $\vol(S^c \cap K_\delta^Q) \ge 7/16 - 1/16 = 6/16 > 1/4$. Since $d \le 1/2$ there exists $E \subset D, F \subset S^c \cap K_{\delta}^Q$ for which $\vol(E) = \vol(F) \ge d/2$. Let $E_0 \subset E$, $F_0 \subset F$ and $\Phi_1,\cdots,\Phi_m : E_0 \to K$ be the conull Borel sets and collection of Borel maps guaranteed by Lemma \ref{lem:deep-transport}. Because the maps satisfy \ref{item:core-equal-route},\ref{item:core-equal-steps} and \ref{item:deep-bounded-compression}, Lemma \ref{lem:first-exit} implies that
\[
\vol(\partial_0^K S) \ge  \frac{cd}{n(n+R/\delta)} \ge \frac{cdr}{n^2R} \implies d \le \frac{Cn^2R}{r}\vol(\partial_0^K S). 
\]
When $d = 0$ this upper bound on $d$ holds trivially. Plugging in the upper bound to the shell dichotomy \eqref{eq:shell-dichotomy} then gives 
\[
\vol(S) \le C \left[\frac{nR}{r}\vol(\partial_K^0 S) + \frac{n^2R}{r}\vol(\partial_K^0 S)\right] \implies \frac{\vol(\partial_0^K S)}{\vol(S)} \ge \frac{cr}{n^2R}.
\]
\end{proof}

\begin{proof}[Proof of Theorem~\ref{thm:main}]
For convenience we write $s = \vol(S)$ when it suits us. We take $H^*$ as defined by \eqref{eq:Hu} and \eqref{eq:u-star} and $\delta$ as in \eqref{eq:delta-core}.
Recall from Proposition \ref{prop:shell-reduction} the shell-dichotomy equation \eqref{eq:shell-dichotomy}
\[
\vol(S) \le \frac{CnR}{r}\vol(\partial_0^K S) + \vol(S \cap H^*).
\]
If $\vol(S \cap H^*) \le s/2$ then rearranging \eqref{eq:shell-dichotomy} yields
\[
\frac{\vol(\partial_0^K S)}{\vol(S)} \ge \frac{cr}{nR} \ge \frac{cr}{nR} \cdot \min \left\{1,\frac{\log(e/s)}{n}\right\}. 
\]
Thus we may assume that $\vol(S \cap H^*) > s/2$. If $\vol(S \cap H^*) \ge 1/96$ then $s > 1/96$ and by Proposition \ref{prop:linear-fallback} we have 
\[
\frac{\vol(\partial_0^K S)}{\vol(S)} \ge \frac{cr}{n^2R} \ge \frac{cr}{nR}\cdot \min \left\{1,\frac{\log(e/s)}{n}\right\}.
\]
Thus we may further assume that $\vol(S \cap H^*) < 1/96$. In this case we have by \eqref{eq:enough-complement} that $\vol(S^c \cap K_{\delta}^Q) \ge 7/16$. 
Therefore  
\[
\frac{\vol(S^c \cap K_{\delta}^Q)}{\vol(S \cap H^*)} \ge \frac{7}{16s} \ge \frac{1}{3s}.
\]
This means we can take conull Borel $E \subset S \cap H^*, F \subset S^c \cap K_{\delta}^Q$ and Borel maps $\Phi_1,\cdots,\Phi_M$ as guaranteed by Lemma \ref{lem:early-expansion} with $m = 1/(3s)$ and for some $\theta$ to be chosen later. Let $E_\theta = \{x \in E : \Phi_M(x) \in S\}$. Since $S$ is Borel and $\Phi_M$ is a Borel map the set $E_{\theta}$ is Borel. Therefore by \ref{item:core-unequal-expansion} of Lemma \ref{lem:early-expansion} we have 
\[
\frac{\vol(E)}{\vol(E_\theta)} = \frac{\vol(S \cap H^*)}{\vol(E_\theta)} \ge \frac{\vol(S)}{2\vol(E_\theta)} \ge \frac{\vol(\Phi_M(E_\theta))}{2\vol(E_\theta)} \ge \frac{1}{2}\left(\frac{1}{3s}\right)^{\theta} \implies \vol(E_\theta) \le 2(3s)^\theta \vol(E).
\]
In particular when $\theta \ge \log(4)/\log(1/(3s))$ it follows that $\vol(E_\theta) \le (1/2)\vol(E)$. Note also that $\log(4)/\log(1/(3s)) \le \log(4)/\log(16) = 1/2$. Since $\Phi_M(E_\theta^c) \subset S^c$ and satisfies \eqref{eq:early-bounded-compression} we may take $\theta = \log(4)/\log(1/(3s)) \le c/\log(1/s)$ and apply Lemma \ref{lem:first-exit} with $\Phi_1,\cdots,\Phi_M$ restricted to $E_\theta^c$ and conclude that 
\begin{align*}
\vol(\partial_0^K S) &\ge \frac{
  \vol(E_\theta^c)}{2M} \ge \frac{c}{M}\vol(S) \ge
   \min\left\{\frac{c}{n},\frac{cr}{n^2R\theta}\right\} \cdot \vol(S) \\
   &= \min\left\{\frac{c}{n},\frac{cr\log(1/s)}{n^2R}\right\}\cdot \vol(S) 
   \ge \frac{cr}{nR}\cdot \min \left\{1, \frac{\log(e/s)}{n}\right\} \cdot \vol(S),
\end{align*}
where the third inequality follows from \ref{item:core-unequal-steps}. In all cases we have shown that \eqref{eq:main-bound} holds and we conclude as desired.
\end{proof}
\begin{remark}[Continuous Compression]
Recall that the maps used in Lemmas \ref{lem:deep-transport} and \ref{lem:early-expansion} are such that the image of a point under two consecutive maps is the same except at possibly a single coordinate, and this coordinate is the same for all points. Using the fact that these maps $\Phi_{\ell,k}$ are injective and monotonicity of the Brenier map, one can show that whenever $x,y \in E$ are such that $\Phi_{\ell,k}(x)$ and $\Phi_{\ell,k}(y)$ agree on all but the $(k+1)$'st coordinate then 
$\sgn((\Phi_{\ell,k}(x) - \Phi_{\ell,k}(y))_{k+1}) = \sgn((\Phi_{\ell,k+1}(x) - \Phi_{\ell,k+1}(y))_{k+1})$. In particular the relative ordering of points along every coordinate fiber of $\Phi_{\ell,k}(E_0)$ is preserved by $\Phi_{\ell,k+1}$. In this way one can view the maps/coordinate moves as analogous to the  continuous compression operation (i.e. shaking). For context, compression is the operation whereby a set $S \subseteq K$ is transformed into another equal-volume subset of $K$ by partitioning $S$ along coordinate fibers and replacing the content of $S$ on a given fiber with an initial segment of said fiber having the same 1-dimensional Lebesgue measure. In view of discrete/combinatorial isoperimetric inequalities, the appearance of this idea as a proof method in our setting is not so surprising. (see \cite{Harper2004} for a survey). A compression normally replaces a set by a more ordered set of the same size
whose boundary is no larger, after which the ordered set can be analyzed.
In~\cite{Fernandez2026}, coordinate shaking is used in exactly this structural
way on the continuous cube.  Here the purpose is different: the shaking is used
dynamically to move labeled volume through the boundary and the resulting displacement yields the lower bound. 
\end{remark}
\section{Upper bounds}\label{sec:upper}

We begin this section by introducing all notation for the two constructions.  For an
unconditional convex body $Q\subset\R^n$, set
\[
 \sigma_i(Q)^2=\frac1{\vol(Q)}\int_Q x_i^2\,dx,\qquad
 \Lambda_Q=\operatorname{diag}\bigl(\sigma_1(Q)^{-1},\ldots,
                               \sigma_n(Q)^{-1}\bigr),
\]
and put $\widehat Q=\Lambda_QQ$.  Thus the uniform measure on $\widehat Q$ has
covariance $I$.  Write
\[
 q_Q=\|\one\|_{\widehat Q},\qquad
 \widehat a_Q=q_Q^{-1}\one.
\]
For any symmetric convex body $B$, the dual norm of a linear functional
$\varphi$ is
\[
 \|\varphi\|_{B^*}=\sup_{x\in B}|\varphi(x)|.
\]
Choose a norming functional $\widehat\ell_Q$ satisfying
\[
 \|\widehat\ell_Q\|_{\widehat Q^*}=1,\qquad
 \widehat\ell_Q(\widehat a_Q)=1,
\]
and define
\begin{equation}\label{eq:isotropic-diagonal-data}
 a_Q=\Lambda_Q^{-1}\widehat a_Q,\qquad
 \ell_Q=\widehat\ell_Q\circ \Lambda_Q,\qquad
 H_Q=\ker\ell_Q,\qquad F_Q=Q\cap H_Q.
\end{equation}
Such a norming functional
exists by the separating-hyperplane theorem
\cite[Theorem~1.1.5]{Zalinescu2002}. Notice that
$\|a_Q\|_Q=\|\ell_Q\|_{Q^*}=\ell_Q(a_Q)=1$.  

For $L,r>0$, define the diagonal $Q$-cylinder and diagonal $Q$-cone by
\begin{align}
 K_{L,r}^Q
 &= [-L,L]a_Q+rF_Q,\label{eq:diagonal-cylinder}\\
 K_{L,r}^{Q,\rm cone}
 &= \operatorname{conv}\left(\{0\},\,La_Q+rF_Q\right).
 \label{eq:diagonal-cone-intro}
\end{align}

The main tool used in verifying Proposition \ref{prop:capsule} is the following upper bound for the spacing between ordered entries of a typical point in $F_Q$.

\begin{lemma}\label{lem:spacing}
Let $n\ge 5$. Suppose $Q$ is an unconditional and isotropic convex body. If $V$ is uniformly distributed over
$F_Q$ and
$V_{(1)}\le\cdots\le V_{(n)}$ are its ordered coordinates, then there is a
deterministic index
\[
 j\in
 \left\{\left\lceil\frac n3\right\rceil,\ldots,
              \left\lfloor\frac{2n}3\right\rfloor\right\}
\]
such that
\begin{equation}\label{eq:spacing-bound}
 \mathbb E\bigl[V_{(j+1)}-V_{(j)}\bigr]\le\frac{C}{n}.
\end{equation}
For this index, one also has pointwise
\begin{equation}\label{eq:central-coordinate-bound}
 \max\{|V_{(j)}|,|V_{(j+1)}|\}\le C.
\end{equation}
\end{lemma}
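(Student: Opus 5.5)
The plan is to reduce both claims to one deterministic fact: in an isotropic unconditional body, at most a small fraction of the coordinates of any point can be large. The spacing bound \eqref{eq:spacing-bound} then follows by telescoping and averaging over the middle third of indices.

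Since $Q$ is isotropic, $\Lambda_Q=\Id$, so $F_Q=Q\cap\ker\ell_Q\subset Q$. Every $V\in F_Q$ is therefore a point of $Q$, and it suffices to prove statements valid for all $x\in Q$.

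\emph{Step 1 (the $\ell_1$ bound).} I would invoke the Bobkov--Nazarov estimate for isotropic unconditional convex bodies:
\[
 Q\subset C_1 n B_1^n,\qquad\text{i.e.}\qquad \sum_i|x_i|\le C_1n \quad\text{for every }x\in Q.
\]
This is the step I expect to be the main obstacle, since it is not recorded earlier in the paper. If a self-contained argument is preferred, I would first try the following route.
\begin{itemize}
\item For $x\in Q$, Lemma~\ref{lem:unconditional-contractions} places the whole box $\prod_i[-|x_i|,|x_i|]$ inside $Q$.
\item Convexity then gives $\operatorname{conv}\bigl(\{\pm|x_i|e_i\}_i\cup Q\bigr)=Q$.
\item One compares second moments along the direction $\sgn(x)$: the variance of $\ip{X}{\sgn(x)}$ is $n$ by isotropy, while the cone over the far point $x$ forces a linear functional of size $\|x\|_1/n$ on a set of non-negligible mass, which yields the bound.
\end{itemize}
Failing that, I would simply cite the result.

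\emph{Step 2 (deterministic central bound \eqref{eq:central-coordinate-bound}).} By Markov's inequality applied to the counting measure, for $x\in Q$,
\[
 \#\{i:|x_i|>t\}\le \frac{C_1n}{t}.
\]
Choosing $t=C:=12C_1$ makes this at most $n/12$. Hence every order statistic $x_{(k)}$ with $n/12<k\le 11n/12$ satisfies $|x_{(k)}|\le C$. For $n\ge5$ (adjusting constants for small $n$ if necessary), every index $k\in\{\lceil n/3\rceil,\ldots,\lfloor 2n/3\rfloor+1\}$ lies in this range. This gives \eqref{eq:central-coordinate-bound} pointwise, for any $j$ in the stated range.

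\emph{Step 3 (averaging).} Set $j_0=\lceil n/3\rceil$ and $j_1=\lfloor 2n/3\rfloor$. Telescoping gives
\[
 \sum_{j=j_0}^{j_1}\mathbb E\bigl[V_{(j+1)}-V_{(j)}\bigr]
 =\mathbb E\bigl[V_{(j_1+1)}-V_{(j_0)}\bigr]\le 2C,
\]
using Step 2. The summands are nonnegative, and there are $j_1-j_0+1\ge cn$ of them for $n\ge5$. Hence some deterministic $j$ in the range has $\mathbb E[V_{(j+1)}-V_{(j)}]\le C'/n$. The same $j$ inherits \eqref{eq:central-coordinate-bound} from Step 2.
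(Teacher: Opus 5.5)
Your proposal is correct and follows essentially the same route as the paper: the Bobkov--Nazarov inclusion $Q\subset CnB_1^n$ gives $\|v\|_1\le Cn$ on $F_Q$, hence pointwise $O(1)$ bounds on the middle-third order statistics, and telescoping $\mathbb E[V_{(j+1)}-V_{(j)}]$ over $\Theta(n)$ indices then yields a deterministic $j$ with spacing $O(1/n)$. The differences are cosmetic: you make the central-coordinate bound explicit by counting, and you let $j$ run up to $\lfloor 2n/3\rfloor$ after checking that index $\lfloor 2n/3\rfloor+1$ is still central. Your sketched self-contained derivation of the $\ell_1$ bound is too vague to stand on its own, but citing Bobkov--Nazarov, which is your fallback, is exactly what the paper does.
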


\begin{proof}
Because $Q$ is unconditional isotropic it satisfies the following the containment inequality, which is due to Bobkov and Nazarov \cite[Propositions~2.4--2.5]{BobkovNazarov2003}

\begin{equation}\label{eq:uncond-sandwich}
\frac{1}{C}B_\infty^n \subset Q \subset CnB_1^n
\end{equation}
This immediately implies $(1/C) \le q_C \le C$. Indeed, $(1/C)B_\infty^n \subset Q$ implies that $\|(1/C)\ind\|_Q \le 1 = \|\hat{a}_Q\|_Q = \|q_Q^{-1}\ind\|_Q \implies q_Q \le C$ while $Q \subset CnB_1^n$ implies that $\|Cn(\ind/n)\|_Q \ge 1 = \|q_Q^{-1}\ind\|_Q \implies q_Q \ge (1/C)$.

Suppose $v \in F_Q$. Then by \eqref{eq:uncond-sandwich} $\|v\|_1 \le Cn\|v\|_Q \le Cn$. Consequently, the ordered entries of $v$ must satisfy

\begin{equation}\label{eq:coordinate-bounds}
-\frac{Cn}{j+1} \le v_{(j)} \le \frac{Cn}{n-j+1},
\end{equation}
otherwise the bound on $\|v\|_1$ is violated. In particular, since $V \in F_Q$ almost surely, this implies that
\[
C \ge \E[V_{\floor{2n/3}} - V_{\ceil{n/3}}] = \sum_{j = \ceil{n/3}}^{\floor{2n/3}-1} \E[V_{(j+1)} - V_{(j)}].
\]
In particular the average value of the summand is $C/n$ so there is some choice of $j$ for which the conclusion of Lemma \ref{lem:spacing} holds.
\end{proof}

We are now ready to prove Propositions~\ref{prop:capsule} and \ref{prop:cone}.

\begin{proof}[Proof of Proposition~\ref{prop:capsule}]
By scaling $Q$ by $\Lambda_Q$ it suffices to consider the case where $Q$ is isotropic. We suppress the subscripts on $q_Q$, $a_Q$,
and $\ell_Q$ here for convenience.  Choose the index $j$ from
Lemma~\ref{lem:spacing}. The cylinder has the parametrization 
\begin{equation}\label{eq:cylinder-parametrization}
 K_{L,r}^{Q}
 =\{tLa+rv: -1\le t\le 1,\ v\in F_Q\}.
\end{equation}
Let $J_{cyl}$ be the determinant Jacobian of the map $(t,v) \mapsto tLa + rv$. Note that the determinant is constant since the map is linear. Define the sign count $N(x)=\#\{i:x_i>0\}$ and the three classes

\begin{equation}\label{eq:capsule-cut}
 \begin{aligned}
 A =\{x \in K_{L,r}^Q : N(x) < n-j\},\qquad
 M =\{x \in K_{L,r}^Q : N(x) = n-j\},\qquad
 B =\{x \in K_{L,r}^Q : N(x) > n-j\}.
 \end{aligned}
\end{equation}

Clearly $A$ and $B$ are axis-disjoint and
\begin{equation}\label{eq:capsule-boundary-layer}
 \partial_0^{K_{L,r}^Q}A\subset M,\qquad
 \partial_0^{K_{L,r}^Q}B\subset M.
\end{equation}

For fixed $v$, with non-identical entries, the ordered entries of $vr+tLa$ are such that the $i$'th largest entry is 0 when $t = -rqv_{(i)}/L$. Consequently $vr+tLa$ is contained in $M$ exactly when $t \in [-rqv_{(j+1)}/L,-rqv_{(j)}/L)$. Therefore, since the set of $v$ with at least 2 identical entries has measure 0, by Fubini and Lemma \ref{lem:spacing} we have
\[
\vol(M) = \int_{F_Q} J_{cyl} \cdot \frac{rq}{L}(v_{(j+1)} - v_{(j)})~dx = J_{cyl} \cdot \vol_{n-1}(F_Q) \cdot \frac{rq}{L}\E[V_{(j+1)} - V_{(j)}] \le J_{cyl} \cdot \vol_{n-1}(F_Q) \cdot \frac{Crq}{nL}
\]
In addition by \eqref{eq:central-coordinate-bound} we know that $vr+tLa$ is contained in $A$ when $t \in [-1, -Cr/L]$ and is contained in $B$ when $t \in [Cr/L, 1]$. Therefore by Fubini we have 
\[
\min(\vol(A),\vol(B)) \ge J_{cyl}\cdot \vol_{n-1}(F_Q) \cdot (1-Cr/L)
\]
By taking $L \ge 2Cr$ we may assume that the denominator is at least $1/2$. 
Since $q$ is constant order the choice of $A,B,M$ imply that 
\[
\psi_0(K_{L,r}^Q) \le \frac{Cr}{nL}.
\]
It remains to verify $Q$-regularity. Containment in $(L+r)Q$ is clear since $\|v+ta\|_Q \le L+r$ for all $v \in rF_Q, t \in [-L,L]$. For showing containment of $(r/2)Q$ we do as follows: Let $P=I-a\ell$.
For any $z \in (r/2)Q$ we may write $z=\ell(z)a+Pz$. Now $|\ell(z)| \le \|\ell\|_{Q^*}\|z\|_Q \le r/2 \le L$ 
 and 
 $\|Pz\|_Q\le\|z\|_Q+|\ell(z)|\|a\|_Q
 \le2\|z\|_Q \le r$. In particular $Pz \in rF_Q$. Thus $(r/2)Q \subset K_{L,r}^Q$.
\end{proof}

\begin{proof}[Proof of Proposition~\ref{prop:cone}]
By scaling $Q$ by $\Lambda_Q$, it suffices to consider the
case where $Q$ is isotropic.  We suppress the subscripts on $q_Q$, $a_Q$,
and $\ell_Q$ for convenience. For convenience we write $K$ instead of $K_{L,r}^{Q,\rm cone}$
The cone has the parametrization
\begin{equation}\label{eq:cone-parametrization}
 K
 =\{tLa+rtv:0\le t\le1,\ v\in F_Q\}.
\end{equation}
Let $V$ be distributed uniformly on $F_Q$. For any non-negative integrable $f$ the average value of $f$ on the cone can be written as 
\begin{equation}\label{eq:cone-radial-disintegration}
\frac{1}{\vol(K)} \int_{K} f(x)~dx = n\int_0^1t^{n-1}
   \mathbb E\bigl[f(tLa+rtV)\bigr]\,dt 
   =n \E\left[ \int_0^1 t^{n-1}f(tLa + rtV)~dt \right]
\end{equation}

Define the sign count  $N_u(x)=\#\{i:x_i > uLa_i\}$ and the two classes
\[
 A_{u,j}=\{x\in K_{L,r}^{Q,\rm cone}:N_u(x) <  n-j\},\qquad M_{u,j}=\{x\in K_{L,r}^{Q,\rm cone}:N_u(x) = n-j\}
\]

By its construction $\partial^K_0 A_{u,j} \subseteq M_{u,j}$. We now determine, for suitable choices of $u$ and $j$, the range of $t$ for which $tLa + rtv$ is in $A_{u,j}$ and $M_{u,j}$. 

For convenience we write $\eps = qr/L$.
 Fix $v \in F_Q$. Then, if $1+\eps v_i > 0$, we have 
\begin{equation}\label{eq:threshold-t}
\frac{tL}{q} + rtv_i > \frac{Lu}{q} \iff t > \frac{Lu/q}{L/q + rv_i} = \frac{u}{1+\eps v_i}
\end{equation}

In particular $(tLa + rtv)_i > uLa_i$ exactly when $t$ is above the threshold in \eqref{eq:threshold-t}. Consequently we have
\begin{equation}\label{eq:threshold-set}
tLa + rtv \in 
\begin{cases}
  A_{u,j} & \text{ if } 0 \le t \le \frac{u}{1+ \eps v_{(j+1)}}\\
  M_{u,j} & \text{ if } \frac{u}{1+ \eps v_{(j+1)}} < t \le \frac{u}{1+ \eps v_{(j)}}\\
\end{cases}
\end{equation}
\\
Suppose now that $\ceil{n/3} \le j \le \floor{2n/3}$. To guarantee that $1+\eps v_{(j)} > 0$ we recall from the proof of Proposition~\ref{prop:capsule} that, since $Q$ is isotropic, it satisfies \eqref{eq:isotropic-deterministic-sandwich} and \eqref{eq:coordinate-bounds}. In particular $q \asymp 1$ and $|v_{(i)}| \le C$. Thus $1+\eps v_{(i)} > 0$ whenever $L/r$ is at least a sufficiently large absolute constant.
Moreover the lower bound on $L/r$ can be chosen so that $1+\eps v_{(j)} \in [1/2,3/2]$. Therefore when $u \le 1/2$ the thresholds for $t$ in \eqref{eq:threshold-set} are always at most 1. 

For the remainder of the proof we will assume $u \le 1/2$.

We now bound $\vol(M_{u,j})/\vol(A_{u,j})$ and $\vol(A_{u,j})/\vol(K_{L,r}^{Q,cone})$. Taking $f$ to be the appropriate indicator functions in 
\eqref{eq:cone-radial-disintegration} and using \eqref{eq:threshold-set} we have 
\begin{equation}\label{eq:cone-ratio}
\frac{\vol(M_{u,j})}{\vol(A_{u,j})} = 
\frac{
\E\left[\left(\frac{u}{1+\eps v_{(j)}}\right)^n\right] - \E\left[\left(\frac{u}{1+\eps v_{(j+1)}}\right)^n\right]
}{\E\left[\left(\frac{u}{1+\eps v_{(j+1)}}\right)^n\right]} = \frac{\E[(1+\eps v_{(j)})^{-n}]}{\E[(1+\eps v_{(j+1)})^{-n}]} - 1.
\end{equation}
Similarly, one has $\vol(A_{u,j})/\vol(K_{L,r}^{Q,cone}) = u^n\E[(1+\eps v_{(j+1)})^{-n}] \in [(2u/3)^n,(2u)^n]$. Thus whenever $s^{1/n}$ is at most a sufficiently small constant and $\ceil{n/3} \le j \le \floor{2n/3}$ there is a choice of $u \le 1/2$ for which $\vol(A_{u,j})/\vol(K_{L,r}^{Q,cone}) = s$.

Returning to \eqref{eq:cone-ratio} we note that 

\[
\left(\frac{(1 + Cr/L)}{(1-Cr/L)}\right)^n \ge \frac{\E[(1+\eps v_{(\ceil{n/3})})^{-n}]}{\E[(1+\eps v_{(\floor{2n/3})})^{-n}]} \ge \prod_{j = \ceil{n/3}}^{\floor{2n/3}-1}\frac{\E[(1+\eps v_{(j)})^{-n}]}{\E[(1+\eps v_{(j+1)})^{-n}]}
\]
Thus there is some $\ceil{n/3} \le j \le \floor{2n/3}$ for which $\frac{\E[(1+\eps v_{(j)})^{-n}]}{\E[(1+\eps v_{(j+1)})^{-n}]}$ is at most $\left(\frac{(1+Cr/L)}{(1-Cr/L)}\right)^7$. By taking $L/r$ to be at least a sufficiently large absolute constant this quantity can be further bounded by $1 + Cr/L$. In particular, for this choice of $j$, \eqref{eq:cone-ratio} yields $\frac{\vol(M_{u,j})}{\vol(A_{u,j})} \le (Cr/L)$. 

We are left with showing that $K_{L,r}^{Q,cone}$ satisfies the necessary regularity conditions.
$K_{L,r}^{Q,\rm cone}\subset(L+r)Q$ follows directly from
\eqref{eq:cone-parametrization}. To show inclusion of a ball we do as follows:  If $z\in(r/8)Q$, write
\[
 \frac L2a+z=tLa+Pz,\qquad
 t=\frac12+\frac{\ell(z)}L,
\]
where $P=I-a\ell$.  Since $L\ge r$,
$3/8\le t\le5/8$, and
$\|Pz\|_Q\le\|z\|_Q+|\ell(z)|\|a\|_Q\le2\|z\|_Q$ gives
\[
 \left\|\frac{Pz}{rt}\right\|_Q\le\frac23.
\]
As $Pz\in H$, the parametrization shows that
$La/2+(r/8)Q\subset K_{L,r}^{Q,\rm cone}$.  
\end{proof}

\section{Closing Remarks}\label{sec:remarks}
To conclude we would like to draw the reader's attention to two follow-up questions to the work presented in this paper (specifically that of the lower bound).
\begin{enumerate}
\item \label{item:question-1} \emph{An Optimal $\ell_0$-isoperimetric inequality}: Under the assumptions of Theorem \ref{thm:main} it is natural to ask if \eqref{eq:set-main} can be further improved. We believe that the worst case should be achieved by the convex body in Theorem \ref{thm:upper}, in which case \eqref{eq:set-main} is off by a factor of $n$. The author spent quite a bit of effort trying to improve \eqref{eq:set-main} by improving the Brenier transport analysis but was unsuccessful. In particular we tried to show that one could take a coarser coordinate-staircase for the Brenier transport. Although we showed that we could do so while maintaining a similar bounded-compression property (in some average case sense) we could not argue that the coarser transport wouldn't force most staircases to partially leave the body, invalidating the analysis. Although our use of a global analysis gives an improvement over those provided in \cite{LaddhaVempala2023,NarayananRajaramanSrivastava2025}, each of which involves some sort of local analysis, we believe a further local analysis to the transport approach would be needed to see an improved $\ell_0$ lower bound.
\item \label{item:question-2} \emph{Further improving $\ell_0$ isoperimetry for unconditional convex bodies}: As was mentioned in \ref{item:question-1} we believe that the lower bound in Theorem \ref{thm:main} can be improved by an additional factor of $n$ and this would match the bound appearing in Theorem \ref{thm:upper}. However, note that Theorem \ref{thm:upper} requires that the ratio between outer and inner regularity be at least a sufficiently large constant. This might seem like a proof artifact and one might even speculate that Theorem \ref{thm:upper} could hold with $R = r$. However, perhaps surprisingly, that is not possible. Indeed, in \cite{Fernandez2026} it was shown that the $\ell_0$ isoperimetric coefficient of an axis-aligned cube is $\Theta(n^{-1/2})$ and that the boundary ratio for any subset is at least of order $n^{-1/2}$. Furthermore the lower bound on the isoperimetric profile of a binomial random variable shown in \cite{Fernandez2026} actually implies that the boundary ratio for subsets of the cube go to $\infty$ as the subset's volume goes to 0. It is an interesting question to ask what happens to the worst case behavior of $\ell_0$ boundary for $Q$-regular convex bodies as the inner and outer regularity converge to one another. In \cite{Fernandez2026} it was speculated that the symmetrization technique of Harper \cite{Harper1999} for Hamming cubes could be modified in a suitable way so as to be applicable to an isotropic unconditional convex body. If doable it seems likely that the resulting analysis would give a boundary ratio exceeding what would be implied by the best possible improvement to Theorem \ref{thm:main}.
\end{enumerate}   

\section{Appendix}
\subsection{Proof of Lemma \ref{lem:shell-path}}\label{sec:shell-path proof}
\begin{proof}
Let
\[
 L=\left\lceil\frac{120R}{r}\right\rceil,
 \qquad \delta=\frac1L,
 \qquad t_\ell=\ell\delta,\qquad 0\le\ell\le L.
\]
Let $P_k$ denote projection onto the first $k$ coordinates. Let $T$ denote the radial map from \eqref{eq:radial-map}. For convenience we suppress the subscript $K$ in $\rho_K$. For
$0\le\ell<L$, $0\le k\le n$, and $x\in E$ define
\begin{equation}\label{eq:diagonal-hybrid}
 \Psi_{\ell,k}(x)
 =(1-t_\ell)x
   +t_\ell T(x)
   +\delta\left(1-\frac{1}{\rho(x)}\right)P_kx.
\end{equation}
We now check that $\Phi_{\ell,k}$ satisfies the necessary properties from \ref{lem:shell-path}.

\begin{enumerate}[label=(\roman*)]
  \item From \eqref{eq:diagonal-hybrid} it's immediate that $\Psi_{0,0} = \Id$ and $\Psi_{L-1,n} = T$. In addition $\Psi_{
    \ell,k}$ and $\Psi_{\ell,k+1}$ can only defer on the $(k+1)$'st coordinate. This proves \ref{item:shell-route}. 
  \item $M \le Ln \le \ceil{\frac{120R}{r}}n \le \frac{CnR}{r}$
  \item Fix $\ell,k$ and, for a given $x \in E$, let $y = \Psi_{\ell,0}(x)$.  By \eqref{eq:diagonal-hybrid} and homogeneity of $\rho$, one has 
  \begin{align*}
  \rho(y) &= (1-t_\ell)\rho(x) + \left(2 - \frac{1}{\rho(x)}\right)\rho(x)t_\ell \\
  &= \rho(x) + t_\ell\rho(x)-t_\ell \\
  &= (1+t_\ell)(\rho(x)-1)+1 \\
  \implies \rho(x)-1 &= \frac{\rho(y)-1}{1+t_\ell}.
  \end{align*}
  Therefore 
  \[
  \Psi_{\ell,k}(x) = y + \delta\left(1 - \frac{1}{\rho(x)}\right)P_kx = y + \delta\left(\frac{\rho(x)-1}{\rho(y)}\right)P_ky = y + \frac{\delta}{1+t_\ell}\left(1-\frac{1}{\rho(y)}\right)P_ky. 
  \]
  Now suppose $x' \in E$ is distinct from $x$ and $y' = \Psi_{\ell,0}(x')$. Since $x,x' \in E$ they satisfy $\rho(x),\rho(x'),\rho(y),\rho(y') \in [1/2,1]$. Therefore 
  \begin{align*}
    \|\left(1 - \frac{1}{\rho(y)}\right)P_ky - \left(1 - \frac{1}{\rho(y')}\right)P_ky'\|_Q 
    &\le \left|\left(1 - \frac{1}{\rho(y)}\right)\right|\|P_k(y-y')\|_Q + \left|\frac{1}{\rho(y)} - \frac{1}{\rho(y')}\right|\|P_ky'\|_Q \\
    &\le \|y-y'\|_Q + |\rho(y)-\rho(y')| \frac{\|P_ky'\|_Q}{\rho(y)\rho(y')} \\
    &\le \frac{10R}{r}\|y-y'\|_Q,
  \end{align*}
  where on the last line we used \eqref{eq:minkowski-Lipschitz} and \eqref{eq:diam-bound}.
  Therefore 
  \[
  \|\Psi_{\ell,k}(x) - \Psi_{\ell,k}(x')\|_Q \ge \|y-y'\|_Q - \frac{\delta}{1 + t_\ell}\frac{10R}{r}\|y-y'\|_Q \ge \frac{11}{12}\|y-y'\|_Q > 0,
  \]
  where the second to last inequality follows from the definition of $\delta$ and the last inequality follows from $\Psi_{\ell,0}$ being an injection. Thus $\Psi_{\ell,k}$ is indeed an injection on $E$. To see that it's Lipschitz, note that the argument for injectivity also implies that 
\[
\|\Psi_{\ell,k}(x) - \Psi_{\ell,k}(x')\|_Q \le \frac{13}{12}\|y - y'\|_Q = \frac{13}{12}\|\Psi_{\ell,0}(x) - \Psi_{\ell,0}(x')\|_Q.
\] Since the identity map and radial-map are Lipschitz maps and $\Psi_{\ell,0}$ is a convex combination of the two we conclude that $\Psi_{\ell,0}$ is Lipschitz. It immediately follows that $\Psi_{\ell,k}$ is Lipschitz.
  \item Recall from the proof of \ref{lem:shell-doubling} that we computed
  \[
  D \left(\left(2 - \frac{1}{\rho(x)}\right)x\right) = \left(2 - \frac{1}{\rho(x)}\right) \cdot I + \frac{1}{\rho(x)^2}x(\nabla \rho(x))^\top. 
  \]
  Similarly, one can compute 
  \[
  D \left(\left(1 - \frac{1}{\rho(x)}\right)P_kx\right) = \left(1 - \frac{1}{\rho(x)} \right) \cdot P_k + \frac{1}{\rho(x)^2}(P_k x)(\nabla \rho(x))^\top 
  \]
  Therefore we have 
  \begin{align*}
  D\Psi_{\ell,k}(x) &= D\left((1-t_\ell)x\right)
   +D\left(t_\ell T(x)\right)
   +D\left(\delta\left(1-\frac{1}{\rho(x)}\right)P_kx\right) \\
   &=\underbrace{\left(1+ t_\ell- \frac{t_\ell}{\rho(x)}\right)\cdot I + \left(\delta - \frac{\delta}{\rho(x)}\right)P_k}_{=:A} + \underbrace{\frac{1}{\rho(x)^2}(t_\ell x + \delta P_kx)}_{=: u}\underbrace{(\nabla \rho(x))^\top}_{=: v^\top}.
  \end{align*}
  To use the determinant formula for a rank-one perturbation we compute $\det(A)$ and $1 + v^\top A^{-1}u$. Since $x \in E$ we have 
  \begin{align*}
  \det(A) 
  &= \left(1 + t - \frac{t}{\rho}\right)^{n-k}\left(1 + t + \delta - \frac{t + \delta}{\rho}\right)^{k} \\
  &= \left(1 + t\left(1 - \frac{1}{\rho}\right)\right)^{n-k}\left(1 + (t+\delta)\left(1 - \frac{1}{\rho}\right)\right)^{k} \\
  &\ge \left(1 - \left(\frac{1/(8n)}{1-1/(8n)}\right)\right)^{n-k}\left(1 - \left(\frac{(1+\delta)/(8n)}{1-1/(8n)}\right)\right)^{k} \\
  &\ge 1 - \frac{(1+\delta)/8}{1-1/8} \ge 5/7
\end{align*}
where for the last inequality we used the fact that $\delta \leq 1$.
For the next step it will be convenient to define the following values 
\[
a := 1 + t_\ell(1 - 1/\rho(x)),\qquad b = \delta(1 - 1/\rho(x)), \qquad c = t_{\ell}/\rho(x)^2,\qquad d = \delta/\rho(x)^2.
\] 
We can write 
\[
A = aI + bP_k, \qquad A^{-1} = \underbrace{a^{-1}I}_{=: B_1} + \underbrace{((a+b)^{-1} - a^{-1})P_k}_{=: B_2}, \qquad u = cx + dP_kx.
\]
Then 
\begin{equation}\label{eq:shell-det-technical}
  \begin{split}
v^\top A^{-1} u 
&= (\nabla \rho(x))^\top(B_1+B_2)(cx + dP_kx) \\
&= a^{-1}\ip{cx}{\nabla \rho(x)} + \ip{A^{-1}(dP_k x)}{\nabla \rho(x)} + \ip{B_2 cx}{\nabla\rho(x)}
  \end{split}
\end{equation}
We now bound the terms of \eqref{eq:shell-det-technical} separately. The first term satisfies 
\[
a^{-1}\ip{cx}{\nabla \rho(x)} = \frac{a^{-1}t_\ell}{\rho(x)} \ge 0.
\]
The second term satisfies 
\[
|\ip{A^{-1}(d P_k x)}{\nabla \rho(x)}| \le \|A^{-1}\|_{op} \cdot d \cdot \|P_k x\|_Q \cdot \|\nabla \rho(x)\|_{Q^*} \le \frac{2R (a+b)^{-1}\delta}{r\rho(x)^2} \le \frac{4R\delta}{r} \le \frac{1}{30}.
\]
The third term satisfies 
\[
|\ip{B_2 cx}{\nabla\rho(x)}| \le \|B_2\|_{op} \cdot \|cx\|_Q \cdot \|\nabla \rho(x)\|_{Q^*} \le |(a+b)^{-1}-a^{-1}| \cdot \frac{2R}{\rho(x)^2} \cdot \frac{1}{r} \le
\frac{|b|}{|a(a+b)|} \cdot \frac{2R}{r\rho(x)^2} \le \frac{4R\delta }{r} \le \frac{1}{30}
\]
We thus deduce from the determinant formula that 
\[
\det(D\Psi_{\ell,k}(x)) = \det(A)(1 + v^\top A^{-1} u) \ge \frac{5}{7}\left(1 - \frac{1}{15}\right) > \frac{1}{2}.
\]
Since $\Psi_{\ell,k}$ is Lipschitz and injective we may apply Lemma \ref{lem:lipschitz-area-formula} with the lower bound of $\det D\Psi_{\ell,k}$ on $E$ to conclude \eqref{eq:shell-bounded-compression}.
\item Fix $\ell,k$, let $x \in E$ and let $y = \Psi_{\ell,0}(x)$. In the proof of \ref{item:shell-injective} we argued that 
\[
\eta(y) = 1-\rho(y) = (1+t_\ell)(1-\rho(x)) = (1+t)\eta(x) \le 2\eta(x)
\]
Therefore from \eqref{eq:shell} we know that $u \le \eta(y) < 4u$. Furthermore
\[
|\rho(\Psi_{\ell,k}(x)) - \rho(y)| \le \frac{1}{r}\|\Psi_{\ell,k}(x) - y\|_Q \le \frac{2\delta\eta(x)}{r\rho(x)}\|P_k x\|_Q \le\frac{4\delta R u}{r(1 - 1/(16n))} \le \frac{u}{2}.
\]
Therefore $1/2 \le \rho(\Psi_{\ell,k}(x)) \le 5/2$ and $\Psi_{\ell,k}(x)$ lies in $U_u$.
\end{enumerate}
\end{proof}

\subsection{Proof of Lemma \ref{lem:deep-transport}}\label{sec:deep-transport proof}
\begin{proof}
Take $\mu$ and $\nu$ to be the probability measures induced by the normalized Lebesgue measures on $E$ and $F$ respectively. By
Theorem~\ref{thm:brenier}, there are conull Borel sets $E_0\subset E$,
$F_0\subset F$, a proper convex function $\varphi$ and a Borel bijection
$\bren:E_0\to F_0$ whose inverse is
$\bren^*$. Since $\varphi$ is convex, Aleksandrov's theorem \cite{Aleksandrov1939} gives the existence of $\nabla^2 \varphi$ $\mu$.a.e. in int(dom $\varphi$). In addition $\det \nabla^2 \varphi$ is locally integrable (see the proof of Lemma \ref{lem:monotone-image-volume}) which means $\mu$.a.e point of $E$ is a Lebesgue point of $\det \nabla^2\varphi$. Lastly since $\nabla^2\varphi$ satisfies \eqref{eq:det-one}, it is invertible $\mu$.a.e. Thus we may further restrict $E_0,F_0$ to co-null Borel subsets on which \eqref{eq:monotonicity}, \eqref{eq:det-one} hold and can take $M = E_0$ for $\varphi$ in Lemma \ref{lem:monotone-image-volume}.
Next, consider the interpolation map $X_t : E_0 \to K^Q_{u_*r}$ defined by $X_t = (1-t)I + t\nabla \varphi$. The image of the map is indeed in $K_{u_*r}^Q$ by convexity of $K_{u_*r}^Q$. Next, 
let
 \[
 L = \ceil{64nR/r},\qquad \delta = \frac{1}{L}, \qquad t_\ell = \ell \delta,\qquad 0 \le \ell \le L.
 \] 

 Let $P_k$ denote projection onto the first $k$ coordinates. For
$0\le\ell<L$, $0\le k\le n$, and $x\in E_0$ define
\begin{equation}\label{eq:brenier-hybrid}
 \Phi_{\ell,k}(x)
 =(1-t_\ell)x
   +t_\ell \bren(x)
   +\delta P_k(\bren(x)-x).
\end{equation}
We now check that $\Phi_{\ell,k}$ satisfy the necessary properties from \ref{lem:deep-transport}.
\begin{enumerate}[label=(\roman*)]
  \item By construction $\Phi_{(0,0)}(x) = x$, $\Phi_{L-1,n}(x) = \bren(x)$, and, for every $x \in E_0$, $\Phi_{\ell,k}(x)$ and $\Phi_{\ell,k+1}(x)$ can only differ on the ($k+1$)'st coordinate (treating $k+1$ as $1$ when $k = n$). In addition $\delta P_k(\bren(x)-x) \in u_*rQ$ and $X_t(x) \in K_{u_*r}^Q$ so $\Phi_{\ell,k}(x) \in K$. Therefore $\Phi_{\ell,k}$ is indeed a map from $E_0$ to $K$. Altogether this proves \ref{item:core-equal-route}.
  \item $M \le Ln \le \ceil{64nR/r}\cdot n \le Cn^2R/r$.
  \item Since $\Id$ and $\bren$ are injections on $E_0$ it suffices to consider the case where $(\ell,k)$ is neither $(0,0)$ nor $(L-1,n)$ Fix such a choice of $\ell$ and $k$. For convenience we write $\Phi$ for $\Phi_{\ell,k}$. Let $x,y \in E_0$ and suppose $\Phi(x) = \Phi(y)$. Assume for the sake of contradiction that $x \neq y$. From \eqref{eq:brenier-hybrid} we have 
  \begin{align*}
  0 =\Phi(x)-\Phi(y) = (1-t_\ell)(x-y) + t_\ell(\bren(x)-\bren(y)) + \delta P_k(\bren(x)-\bren(y)-(x-y)) = 0 \\
  \implies (t_\ell I + \delta P_k)(\bren(x)-\bren(y)) = - ((1-t_\ell)I - \delta P_k)(x-y).
  \end{align*}
For standard basis vector $e_i$ we have 
\begin{equation}\label{eq:brenier-inject-cond}
(t_\ell I + \delta P_k)e_i = 0 \iff t_\ell = 0,i > k \qquad \text{and} \qquad ((1-t_\ell)I - \delta P_k)e_i = 0 \iff t_\ell = 1-\delta,i \le k
\end{equation}
Thus there are 3 cases:
\begin{enumerate}[label=(\roman*)]
  \item \label{item:case-1}The condition $(t_\ell I + \delta P_k)e_i = 0$ forces  
  \[ (x-y)_i = 0 \implies (x-y)_i \cdot (\bren(x)-\bren(y))_i = 0
  \]
  \item \label{item:case-2}The condition $((1-t_\ell)I - \delta P_k)e_i = 0$ forces  
  \[(\bren(x)-\bren(y))_i = 0 \implies (x-y)_i \cdot (\bren(x)-\bren(y))_i = 0 
  \]
  \item \label{item:case-3} If either the condition for \ref{item:case-1} is not satisfied and $(x-y)_i \neq 0$ or the condition for \ref{item:case-2} is not satisfied and $(\bren(x)-\bren(y))_i \neq 0$ then 
  \[0 \neq \sgn((x-y)_i) = -\sgn((\bren(x)-\bren(y))_i) \implies (x-y)_i \cdot (\bren(x)-\bren(y))_i < 0
  \]
\end{enumerate}
Next, since $\Id$ and $\bren$ are injections on $E_0$ there exists indices $j_1,j_2$ such that $(x-y)_{j_1} \neq 0$ and $(\bren(x)-\bren(y))_{j_2} \neq 0$.
Now because of the constraint on $t_\ell$ in \eqref{eq:brenier-inject-cond} 
the conditions for 
\ref{item:case-1} and
\ref{item:case-2}
can't both be satisfied.
If \ref{item:case-1} can't be satisfied then, since $(x-y)_{j_1} \neq 0$, \ref{item:case-3} occurs and if \ref{item:case-2} can't be satisfied then, since $(\bren(x)-\bren(y))_{j_2} \neq 0$, \ref{item:case-3} also occurs. In either case we deduce that $\ip{\bren(x)-\bren(y)}{x-y} < 0$. But $\bren$ is monotone (see \eqref{eq:monotonicity}) meaning $\ip{\bren(x)-\bren(y)}{x-y} \ge 0$. As this is a contradiction we conclude that $x = y$. Therefore $\Phi$ is an injection. 
  \item 
  The proof of \ref{item:deep-bounded-compression}
has two parts. The first part is to argue that the determinant of the Jacobian of $\Phi_{\ell,k}$ is at least $1/2$ for all coordinate hybrids $\Phi_{\ell,k}$ and at all $x \in E_0$.
Write
\begin{align*}\label{eq:hybrid-Jacobian}
  D\Phi_{\ell,k}(x) &= D((1-t_\ell)x) + D(t_\ell(\bren(x))) + D(\delta P_k(\bren(x)-x)) \\
  &= (1-t_\ell)I + t_{\ell}\hess(x) + \delta P_k(\hess(x) - I) \\
  &= I + (t_\ell I + \delta P_k)(\hess(x)-I)
\end{align*}
where $\hess(x)$ denotes the Hessian of $\varphi$ at $x$. Because $\hess(x)$ is symmetric and positive semi-definite it admits an eigendecomposition
\begin{equation}\label{eq:ev-decomp-H}
\hess(x) = \sum_{i = 1}^n \lambda_i v_iv_i^\top, 
\end{equation}
where each $\lambda_i \ge 0$ is a non-negative eigenvalue of $\hess(x)$ and each $v_i$ are orthonormal eigenvectors of $\hess(x)$.

When $k = 0$ the map $P_k$ is identically 0 so, in view of \eqref{eq:ev-decomp-H} we may write
\begin{equation}\label{eq:ev-decomp-common}
D\Phi_{\ell,0}(x) = I + t_\ell(\hess(x)-I) = \sum_{i = 1}^n (1-t_{\ell} + \lambda_i t)v_iv_i^\top
\end{equation}

Therefore 
\[
\det D\Phi_{\ell,0}(x) = \prod_{i = 1}^n (1-t_{\ell} + \lambda_i t_{\ell}) \ge \prod_{i = 1}^n \lambda_i^{t_{\ell}} = (\det \hess(x))^{t_{\ell}} = 1,
\]
where the middle inequality is AM-GM and the last equality follows from \eqref{eq:det-one}.

When $k \neq 0$ more effort is required. 
We write $[k] = \{1,\cdots,k\}, [k]^c = \{1,\cdots,n\}\setminus [k]$ and write $M_{J_1,J_2}$ to denote the submatrix of $M$ whose rows and columns are indexed by $J_1$ and $J_2$ respectively.
Suppose $\ell$ satisfies $t_{\ell} \le 1/2$. For convenience we write $U_1 := D\Phi_{\ell,0}(x),U_2 := D\Phi_{\ell,k}(x)$ and let $J = \{1,\cdots,k\}$. We first bound $\det(U_2)/\det(U_1)$ from below.  $U_1$ is invertible since $\det(U_1) \ge 1$. Therefore we may write
\begin{equation}\label{eq:prod-ident}
U_2U_1^{-1} = (U_1 + (U_2-U_1))U_1^{-1} = I + \delta P_k(\hess(x)-I)U_1^{-1},
\end{equation}
Up to relabeling of coordinates we may write $U_1^{-1}$ as the block matrix

\[
U_1^{-1} =
\begin{pmatrix}
(U_1^{-1})_{[k],[k]} & (U_1^{-1})_{[k],[k]^c} \\
(U_1^{-1})_{[k]^c,[k]} & (U_1^{-1})_{[k]^c,[k]^c}
\end{pmatrix}
\]
Consequently 
\[
I + \delta P_{k}(\hess(x)-I)U_1^{-1} = 
\begin{pmatrix}
I_{[k],[k]} + \delta((\hess(x)-I)U_1^{-1})_{[k][k]} & \delta P_k(\hess(x))(U_1^{-1})_{[k],[k]^c} \\
0 & I_{[k]^c,[k]^c}
\end{pmatrix}
\]
In particular the matrix is upper-block triangular so its determinant is the product of the determinants of the diagonal blocks. 
The bottom right block is identity and has determinant 1. For the top left block we may use \eqref{eq:ev-decomp-H} and \eqref{eq:ev-decomp-common} to write 

\begin{equation}\label{eq:ev-decomp-1}
I + \delta((\hess(x)-I)U_1^{-1}) = \sum_{i = 1}^n \left(1 + \frac{\delta(\lambda_i - 1)}{(1 - t_{\ell} + \lambda_i t_{\ell})}\right)v_iv_i^\top.
\end{equation}

A routine computation shows that the map $\lambda \mapsto (\lambda - 1)/(1 - t_{\ell} + \lambda t_{\ell})$ has non-negative derivative whenever $1 + t_\ell(\lambda - 1)$ is non-zero, and this holds whenever $\lambda \ge 0$ and $t_{\ell} \in [0,1/2]$. In particular the scalars appearing in the summand in \eqref{eq:ev-decomp-1} are between $1 - \delta/(1-t_{\ell})$ and $1+\delta/t_{\ell}$ (which is $+\infty$ when $t_\ell = 0$) and so the eigenvalues of $I + \delta (\hess(x)-I)U_1^{-1}$ fall in this range. By the Cauchy-Interlacing theorem the eigenvalues of $I_{[k],[k]} + \delta((\hess(x)-I)U_1^{-1})_{[k][k]}$ also fall in this range. In particular its determinant is at least $(1-2\delta)^n > (1-1/(2n))^n \ge 1/2$.
Combining \eqref{eq:prod-ident} and our block determinant estimates we conclude that 
\[
\det(U_2) = \det(U_1) \cdot \det(U_2U_1^{-1}) \ge 1 \cdot \frac{1}{2} = \frac{1}{2}
\]
Suppose now $\ell$ is such that $t_\ell \ge 1/2$. The analysis is instead repeated with $U_2 = D\Phi_{\ell,k},U_1 = D\Phi_{\ell+1,0}$. It is nearly identical to the previous case except for some minor differences. In particular the rightmost expression in \eqref{eq:prod-ident} becomes $I - \delta (I-P_k)(\hess(x)-I)U_1^{-1}$ and the corresponding version of \eqref{eq:ev-decomp-1} implies that the eigenvalues of $I - \delta (I-P_k)(\hess(x)-I)U_1^{-1}$ are between $1 - \delta/t_{\ell + 1}$ and $1 + \delta/(1-t_{\ell+1})$ (which is $+\infty$ when $t_{\ell+1} = 1$). The exact same determinant argument as in the previous case then implies that $\det(U_2) \ge 1/2$. This completes the first part.
  
We now turn our attention to the second part (i.e. converting the determinant lower bound to bounded compression).
For convenience we write $\Phi$ for $\Phi_{\ell,k}$. 
Note that
\[
\Phi_{\ell,k}(x) = (1-t_\ell)x + t_\ell \bren(x)+\delta P_k(\bren(x)-x) = \underbrace{((1-t_\ell)I - \delta P_k)}_Ax + \underbrace{(t_\ell I + \delta P_k)}_B(\bren(x)).
\]
We case on $\ell$.
\begin{itemize}
  \item $\ell > 0$: In this case $B$ is invertible. Therefore $B^{-1}\Phi(x) = B^{-1}Ax + \bren(x)$.
  Define 
  \begin{equation}\label{eq:brenier-hybrid-map-1}
  \psi(x) = \frac{1}{2}x^\top B^{-1}Ax + \varphi(x).
  \end{equation}
  Since $B^{-1}A$ is a non-negative diagonal matrix the map $x \mapsto \frac{1}{2}x^\top B^{-1}Ax$ is convex. Therefore, since $\varphi$ is also convex, so is $\psi$. From \eqref{eq:brenier-hybrid-map-1} it follows that
  \begin{equation}\label{eq:brenier-hybrid-map-2}
    \begin{split}
  \nabla \psi(x) &= B^{-1}Ax + \nabla \varphi(x) = B^{-1}\Phi(x)\\
  \nabla^2\psi(x) &= B^{-1}A + \hess(x) = DB^{-1}\Phi(x) \\
    \end{split}
  \end{equation}
  This means $B^{-1}\Phi$ is the gradient of a convex function.
  After further restricting to conull Borel $E'_0 \subset E_0$, on which $\nabla^2 \psi$ exists and $\det \nabla^2 \psi$ has all Lebesgue points, we have for every Borel $G \subset E_0$
  
  \begin{align*}
  \vol(\Phi(G)) &= \vol(BB^{-1}\Phi(G)) \\
  &= \det(B)\vol(B^{-1}\Phi(G)) \\
  &\ge \det(B)\vol(\nabla \psi(G \cap E'_0)) \\
  &\ge \det(B)\int_{G \cap E'_0} \det \nabla^2 \psi(x) dx \\
  &= \det(B) \int_{G \cap E'_0} \det DB^{-1}\Phi(x)~dx \\
  &= \det(B) \int_{G \cap E'_0} \det B^{-1}D\Phi(x)~dx \\
  &= \int_{G \cap E'_0} \det D\Phi(x)~dx \\
  &= \int_{G} \det D\Phi(x)~dx
  \ge \frac{1}{2}\vol(G).
  \end{align*}
  On the second and last and second to last equalities we used the fact that $B$ is a linear map. For the third and fourth equalities we used \eqref{eq:brenier-hybrid-map-2} and for the first inequality we used Lemma \ref{lem:monotone-image-volume}.
  \item $\ell = 0$: In this case $A = I-\delta P_k$ and $B = \delta P_k$. If $k = 0$ then $\Phi = \Id$ and there's nothing to do so we assume $k > 0$. For $x \in \R^n$ we write $x = (w,z) \in \R^k \times \R^{n-k}$. For $z \in \R^{n-k}$ define $E_{0,z} = \{w \in \R^k : (w,z) \in E_0\}$. Since $E_0$ is Borel so is $E_{0,z}$ for every $z$. Define the map
  \[
  \Phi_z(w) = (1-\delta)w + \delta \nabla_w \varphi(w,z).
  \]
  Its Jacobian is 
  \[
  D_w \Phi_z(w) = (1-\delta)I_k + \delta(\hess(w,z))_{[k],[k]}, 
  \]
  Which is exactly the top $k \times k$ left block of $D\Phi_{0,k}(w,z)$. Therefore its determinant is at least 1/2.
  Note that $\Phi_z(w)$ is the gradient of 
  \[
  \psi_z(w) = \frac{1-\delta}{2}\|w\|_2^2 + \delta \varphi(w,z).
  \]
  which itself a convex function. Therefore for each $E_{0,z}$ we may restrict to a conull Borel subset $E'_{0,z}$ on which $\nabla_w^2 \psi_z$ exists and $\det \nabla_w^2 \psi_z$ has all Lebesgue points. Therefore by Lemma \ref{lem:monotone-image-volume} all Borel $G_z \subset E_{0,z}$ satisfy 
  \[
  \vol(\Phi_z(G_z)) \ge \vol(\Phi_z(G_z \cap E'_{0,z})) \ge \frac{1}{2}\vol(G_z \cap E'_{0,z}) = \frac{1}{2}\vol(G_z \cap E_{0,z}) = \frac{1}{2}\vol(G_z).
  \] 
  for every $z$.
  Lastly for every Borel $G \subset E_0$ let $G_z := \{w \in \R^k : (w,z) \in G\}$. By Fubini we have 
  \[
  \vol(\Phi(G)) = \int_{\R^{n-k}}\vol(\Phi_z(G_z))~dz \ge \frac{1}{2}\int_{\R^{n-k}}\vol(G_z)~dz = \frac{1}{2}\vol(G),
  \]
  
\end{itemize}
\end{enumerate}
\end{proof}

\bigskip
\noindent
Department of Mathematics, University of Southern California,\newline
3620 S. Vermont Ave., Kaprielian Hall (KAP 104),
Los Angeles, CA 90089-2532, USA

\smallskip
\noindent
Email address: \href{mailto:manuelf7@usc.edu}{\texttt{manuelf7@usc.edu}}


\clearpage
\begin{thebibliography}{99}

\bibitem{AscolaniLavenantZanella2024}
F.~Ascolani, H.~Lavenant, and G.~Zanella,
\newblock Entropy contraction of the Gibbs sampler under log-concavity,
\newblock arXiv:2410.00858, revised 2026.

\bibitem{BobkovNazarov2003}
S.~G. Bobkov and F.~L. Nazarov,
\newblock On convex bodies and log-concave probability measures with
unconditional basis,
\newblock in \emph{Geometric Aspects of Functional Analysis},
Lecture Notes in Mathematics, vol.~1807, Springer, Berlin, 2003,
pp.~53--69.
\newblock doi:10.1007/978-3-540-36428-3\_6.

\bibitem{Brenier1991}
Y.~Brenier,
\newblock Polar factorization and monotone rearrangement of vector-valued functions,
\newblock \emph{Comm. Pure Appl. Math.} \textbf{44} (1991), 375--417.
\newblock doi:10.1002/cpa.3160440402.

\bibitem{DiaconisStroock1991}
P.~Diaconis and D.~Stroock,
\newblock Geometric bounds for eigenvalues of Markov chains,
\newblock \emph{Ann. Appl. Probab.} \textbf{1} (1991), 36--61.
\newblock doi:10.1214/aoap/1177005980.

\bibitem{Fernandez2026}
V.~Manuel Fernandez,
\newblock On the $\ell_0$ isoperimetric coefficient for measurable sets,
\newblock \emph{Discrete Comput. Geom.} \textbf{75} (2026), 1378--1404.
\newblock doi:10.1007/s00454-025-00742-5.

\bibitem{Federer1969}
H.~Federer,
\newblock \emph{Geometric Measure Theory},
\newblock Die Grundlehren der mathematischen Wissenschaften, vol.~153,
Springer-Verlag, New York, 1969.

\bibitem{FigalliMaggiPratelli2010}
A.~Figalli, F.~Maggi, and A.~Pratelli,
\newblock A mass transportation approach to quantitative isoperimetric inequalities,
\newblock \emph{Invent. Math.} \textbf{182} (2010), 167--211.
\newblock doi:10.1007/s00222-010-0261-z.

\bibitem{GemanGeman1984}
S.~Geman and D.~Geman,
\newblock Stochastic relaxation, Gibbs distributions, and the Bayesian
restoration of images,
\newblock \emph{IEEE Trans. Pattern Anal. Mach. Intell.} \textbf{6} (1984),
721--741.
\newblock doi:10.1109/TPAMI.1984.4767596.

\bibitem{GoyalDeligiannidisKantas2025}
A.~Goyal, G.~Deligiannidis, and N.~Kantas,
\newblock Mixing time bounds for the Gibbs sampler under isoperimetry,
\newblock arXiv:2506.22258, 2025.

\bibitem{Harper1999}
L.~H. Harper,
\newblock On an isoperimetric problem for Hamming graphs,
\newblock \emph{Discrete Appl. Math.} \textbf{95} (1999), 285--309.

\bibitem{Harper2004}
L.~H. Harper,
\newblock \emph{Global Methods for Combinatorial Isoperimetric Problems},
\newblock Cambridge Studies in Advanced Mathematics, vol.~90,
Cambridge University Press, Cambridge, 2004.

\bibitem{JerrumSinclair1989}
M.~Jerrum and A.~Sinclair,
\newblock Approximating the permanent,
\newblock \emph{SIAM J. Comput.} \textbf{18} (1989), 1149--1178.
\newblock doi:10.1137/0218077.

\bibitem{John1948}
F.~John,
\newblock Extremum problems with inequalities as subsidiary conditions,
\newblock in \emph{Studies and Essays Presented to R.~Courant on His
60th Birthday}, Interscience, New York, 1948, pp.~187--204.

\bibitem{KannanLovaszSimonovits1995}
R.~Kannan, L.~Lov\'asz, and M.~Simonovits,
\newblock Isoperimetric problems for convex bodies and a localization lemma,
\newblock \emph{Discrete Comput. Geom.} \textbf{13} (1995), 541--559.

\bibitem{LaddhaVempala2023}
A.~Laddha and S.~S. Vempala,
\newblock Convergence of Gibbs sampling: coordinate hit-and-run mixes fast,
\newblock \emph{Discrete Comput. Geom.} \textbf{70} (2023), 406--425.
\newblock doi:10.1007/s00454-023-00497-x.

\bibitem{LoomisWhitney1949}
L.~H. Loomis and H.~Whitney,
\newblock An inequality related to the isoperimetric inequality,
\newblock \emph{Bull. Amer. Math. Soc.} \textbf{55} (1949), 961--962.

\bibitem{McCann1995}
R.~J. McCann,
\newblock Existence and uniqueness of monotone measure-preserving maps,
\newblock \emph{Duke Math. J.} \textbf{80} (1995), 309--323.
\newblock doi:10.1215/S0012-7094-95-08013-2.

\bibitem{McCann1997}
R.~J. McCann,
\newblock A convexity principle for interacting gases,
\newblock \emph{Adv. Math.} \textbf{128} (1997), 153--179.
\newblock doi:10.1006/aima.1997.1634.

\bibitem{NarayananRajaramanSrivastava2025}
H.~Narayanan, A.~Rajaraman, and P.~Srivastava,
\newblock Sampling from convex sets with a cold start using multiscale decompositions,
\newblock \emph{Probab. Theory Related Fields} \textbf{191} (2025), 1169--1232.
\newblock doi:10.1007/s00440-024-01341-w.

\bibitem{NarayananSrivastava2022}
H.~Narayanan and P.~Srivastava,
\newblock On the mixing time of coordinate hit-and-run,
\newblock \emph{Combin. Probab. Comput.} \textbf{31} (2022), 320--332.
\newblock doi:10.1017/S0963548321000328.

\bibitem{Paouris2006}
G.~Paouris,
\newblock Concentration of mass on convex bodies,
\newblock \emph{Geom. Funct. Anal.} \textbf{16} (2006), 1021--1049.
\newblock doi:10.1007/s00039-006-0584-5.

\bibitem{Sinclair1992}
A.~Sinclair,
\newblock Improved bounds for mixing rates of Markov chains and multicommodity flow,
\newblock \emph{Combin. Probab. Comput.} \textbf{1} (1992), 351--370.
\newblock doi:10.1017/S0963548300000390.

\bibitem{Turchin1971}
V.~F. Turchin,
\newblock On the computation of multidimensional integrals by the Monte Carlo
method,
\newblock \emph{Theory Probab. Appl.} \textbf{16} (1971), 720--724.
\newblock doi:10.1137/1116083.

\bibitem{LovaszKannan1999}
L.~Lov\'asz and R.~Kannan,
\newblock Faster mixing via average conductance,
\newblock in \emph{Proceedings of the 31st Annual ACM Symposium on Theory of Computing},
ACM, New York, 1999, pp.~282--287.
\newblock doi:10.1145/301250.301317.

\bibitem{Villani2003}
C.~Villani,
\newblock \emph{Topics in Optimal Transportation},
\newblock Graduate Studies in Mathematics, vol.~58, American Mathematical Society, Providence, 2003.

\bibitem{Wadia2024}
N.~S. Wadia,
\newblock A mixing time bound for Gibbs sampling from log-smooth log-concave distributions,
\newblock arXiv:2412.17899, 2024.

\bibitem{Zalinescu2002}
C.~Z\u{a}linescu,
\newblock \emph{Convex Analysis in General Vector Spaces},
\newblock World Scientific, River Edge, NJ, 2002.

\bibitem{IsakssonKindlerMossel2012}
M.~Isaksson, G.~Kindler, and E.~Mossel,
\newblock The geometry of manipulation: a quantitative proof of the
Gibbard--Satterthwaite theorem,
\newblock \emph{Combinatorica} \textbf{32} (2012), 221--250.
\newblock doi:10.1007/s00493-012-2704-1.

\bibitem{Aleksandrov1939}
A.~D. Aleksandrov,
\newblock Almost everywhere existence of the second differential of a
convex function and some properties of convex surfaces connected with it
[in Russian],
\newblock \emph{Uchen. Zap. Leningrad Gos. Univ. Math. Ser.} \textbf{6}
(1939), 3--35.

\end{thebibliography}
\end{document}